\patchcmd{\section}{\bfseries}{\bfseries\boldmath }{}{}
\patchcmd{\subsection}{\bfseries}{\bfseries\boldmath }{}{}
\def\\rho{\mathcal{\rho}}
\def\cP{\mathcal{P}}
\def\N{\mathbb N}
\def\Z{\mathbb Z}
\def\R{\mathbb R}
\def\T{\mathbb T}
\def\\rho\rho{\mathbf{\rho}}
\def\eps{\varepsilon}
\def\eff{\mathrm{eff}}
\def\sym{\mathrm{sym}}
\def\Lip{{\rm Lip}}
\def\Id{{I}\,}
\def\pa{\partial}
\def\ds{\displaystyle}
\DeclareMathAlphabet{\mathup}{OT1}{\familydefault}{m}{n}
\newcommand{\dx}[1]{\mathop{}\!\mathup{d} #1}
\newcommand{\intt}[1]{\int_{\T^d} #1 \dx{x}}
\newcommand{\intto}[1]{\int_{\T} #1 \dx{x}}
\DeclarePairedDelimiter{\abs}{\lvert}{\rvert}
\DeclarePairedDelimiter{\norm}{\lVert}{\rVert}
\DeclarePairedDelimiter{\bra}{(}{)}
\DeclarePairedDelimiter{\pra}{[}{]}
\DeclarePairedDelimiter{\set}{\{}{\}}
\DeclarePairedDelimiter{\skp}{\langle}{\rangle}
\newcommand{\Leb}{\ensuremath{{L}}}
\newcommand{\SobH}{\ensuremath{{H}}}
\newtheorem{theorem}{Theorem}
\newtheorem{remark}{Remark}[section]
\newtheorem{definition}{Definition}[section]
\newtheorem{proposition}[theorem]{Proposition}
\newtheorem{lemma}[theorem]{Lemma}
\newtheorem{corollary}[theorem]{Corollary}
\newenvironment{customthm}[1]
  {\innercustomthm}
  {\endinnercustomthm}
\numberwithin{equation}{section}
\numberwithin{figure}{section}
\numberwithin{theorem}{section} 
\tikzset{snake it/.style={decorate, decoration=snake}}
\title{On the diffusive-mean field limit for weakly interacting diffusions exhibiting phase transitions}
\author{Matias G. Delgadino}
\address{Department of Mathematics, Pontifical Catholic University of Rio de Janeiro}
\email{matias.delgadino@puc-rio.br}
\author{Rishabh S. Gvalani \& Grigorios A. Pavliotis}
\email{rg1314@ic.ac.uk}
\email{g.pavliotis@imperial.ac.uk}
\address{Department of Mathematics, Imperial College London}
\thanks{}
\begin{document}

\begin{abstract}
    The objective of this article is to analyse the statistical behaviour of a large number of weakly interacting diffusion processes evolving under the influence of a periodic interaction potential. We focus our attention on the combined mean field and diffusive (homogenisation) limits. In particular, we show that these two limits do not commute if the mean field system constrained to the torus undergoes a phase transition, that is to say if it admits more than one steady state. A typical example of such a system on the torus is given by the noisy Kuramoto model of mean field plane rotators. As a by-product of our main results, we also analyse the energetic consequences of the central limit theorem for fluctuations around the mean field limit and derive optimal rates of convergence  in relative entropy of the Gibbs measure to the (unique) limit of the mean field energy below the critical temperature.  
\end{abstract}
\keywords{Kuramoto oscillators, diffusive limit, mean field limit, gradient flows}

\maketitle
 {\small
 \tableofcontents
 }

\vspace{0.2cm}

\section{Introduction}
\subsection{Overview}
The study of large systems of  interacting particles in the presence of noise has attracted a large amount of interest in recent years. This is largely due to the fact that they pose challenging mathematical questions and that they appear in several applications, ranging from the theory of random matrices~\cite{Ser15} and the construction of K\"ahler--Einstein metrics~\cite{BO2018} to the design of algorithms for global optimisation~\cite{RV2018,KPP2019}, biological models of chemotaxis~\cite{FJ17}, and models of opinion formation~\cite{GPW17}.

We place ourselves in the setting of a system of weakly interacting diffusion processes as in~\cite{Oel84}. It is well-known that, under appropriate assumptions on the interaction and confining potentials, one can pass to the mean field limit as $N \to \infty$ to obtain the so-called McKean--Vlasov equation~\eqref{eps=1} for the limit of the $N$-particle empirical measure. More precisely, given chaotic initial data, the empirical measure associated to the system of particles converges weakly to the weak solution of the McKean--Vlasov equation. Formally, one can say that the law of the $N$-particle system decouples and converges to $N$ copies of the mean field McKean--Vlasov equation. This corresponds to a strong law of large numbers (LLN) for the the empirical measure. A natural question to ask then is whether one can obtain a second order characterisation of this convergence, i.e. a central limit theorem (CLT). 

Partial results in this direction do exist: Fernandez and M\'el\'eard~\cite{fernandez1997hilbertian} obtained a finite-time horizon version of the CLT. They showed that the fluctuations around the mean field limit are described in the large $N$-limit by a Gaussian random field which itself is the solution of a linear stochastic PDE.  Additionally, Dawson~\cite{dawson1983critical} proved an equilibrium CLT for the empirical measure of  a system of particles in a bistable confining potential and Curie--Weiss interaction. The interesting feature of Dawson's system is that exhibits a phase transition, i.e. for a certain value of the interaction strength the system transitions from having one invariant measure to having multiple. Dawson showed that below the phase transition point equilibrium fluctuations are described by Gaussian random field, similar to the result in~\cite{fernandez1997hilbertian}. However, at the critical temperature the fluctuations become non-Gaussian and are given by the invariant measure of nonlinear SDE. These are non-Gaussian fluctuations are persistent and are characterised by a longer time scale, exhibiting the well known phenomenon of critical slowing down (cf.~\cite{Shi87} for a less rigorous derivation of similar results). We are not aware of any results on the limiting behaviour of the fluctuations that have been obtained ahead of the phase transition.

Fluctuations around the McKean--Vlasov mean field limit for a system of weakly interacting diffusions with an internal degree of freedom were also studied recently in~\cite{BBC19}. Under the assumption of scale separation between the macroscopic and microscopic dynamics, a large deviations principle (LDP) was established for the slow dynamics, valid in the combined limit of infinite scale separation ($\eps \to 0$) and of the number of particles going to infinity ($N \to \infty$). This LDP was then used to deduce information about the fluctuations around the mean field limit and to also offer partial justification for the so-called Dean equation, a stochastic partial differential equation used in dynamical density functional theory which combines, formally, the mean field limit and central limit theorem results for the system of weakly interacting diffusions. Furthermore, the connection between the LDP framework and the Chapman--Enskog approach to the study of the hydrodynamic limit was discussed in detail. The crucial assumption made by the authors was that the microscopic dynamics has a unique stationary state, i.e. that no phase transitions occur.

The prototype of the systems we consider is the following system of $N$ interacting SDEs on $\R$
\begin{align}
\dx{X}_t^{i}= - \frac{1}{N} \sum_{j=1,j \neq 1}^N \sin\bra*{2 \pi \bra*{X_t^{i}- X_t^{j}}} \dx{t} + \sqrt{2\beta^{-1}}dB_t^{i}
\label{R}
\end{align}
where the $B_t^i$ are independent $\R$-valued Wiener processes. The interesting feature about the above system is that the interaction potential is $1$-periodic. As a consequence of this, the behaviour of~\eqref{R} is influenced heavily by the corresponding quotiented process on $\T$ (the one dimensional unit torus). The quotiented system on the torus is in fact the noisy Kuramoto model for mean field plane rotators\footnote{Additionally, its reversible Gibbs measure corresponds to the classical Heisenberg $XY$ model for lattice systems with continuous spins and mean field interaction. This is immediately apparent when one considers the associated Hamiltonian which is given by:
$$
H^N(x_1, \dots, x_N)= -(2N)^{-1} \sum_{i,j}\cos(2 \pi(x_i-x_j))=- (2N)^{-1} \sum_{i,j}S_i \cdot S_j \, ,
$$ where $S_i= (\cos(2\pi x_i), \sin(2 \pi x_i)), $cf.~\cite[Chapter 9]{FV2018} or~\cite{BGP10}.    }~\cite{BGP10,CGPS19}. Indeed (cf. Proposition~\ref{XY}), one can show that the corresponding mean field limit on the torus exhibits a phase transition. A more complete picture of the local bifurcations and phase transitions for the McKean--Vlasov equation on the torus can be found in~\cite{CGPS19}\footnote{In later sections, as a technical requirement, we will consider the same system with an additional confining potential in order the break the translation symmetry of the noisy Kuramoto system which leads to degeneracy of minimisers ahead of the phase transition (cf. Proposition~\ref{XY}). }.

In the spirit of Dawson, our main objective is to study fluctuations in the presence of phase transitions. However, instead of the phase transitions of the system on $\R$, we will be concerned with the phase transitions of the quotiented system on $\T$. Furthermore, we study the diffusive limit which can be thought of as the first step in understanding fluctuations of the $N$-particle system.   Although we do discuss the implications of a full CLT (cf. Section~\ref{ftheorem}), we concern ourselves in this paper mainly with the combined diffusive-mean field limits. 

The problem that we study in this paper is closely related, and simpler, to the one studied in~\cite{BBC19}: scale separation arises naturally in our case due to the disparity between the period of the interaction potential which is the characteristic length scale of the microscopic dynamics, and the long, diffusive length/time scale. The ``hydrodynamics'' in our problem is described by the (homogenised) heat equation, with the effective covariance matrix given by the standard homogenisation formula: compare Equation~\eqref{kipnisvaradhan} below with formulas (3.14) and (3.15) in~\cite{BBC19}. However, in contrast to~\cite{BBC19} our main focus is on the effect of the presence of phase transitions at the microscopic scale on the effective/macroscopic dynamics. We are, in particular, interested on the effect of phase transitions on the (lack of) commutativity between the homogenisation and mean field limits.

Before we discuss what we mean by the combined limit, we remind the reader of what we mean by the diffusive limit. For a fixed number of particles $N>0$ for the system in~\eqref{R}, a natural question to ask is how the law of the system behaves under the diffusive rescaling, i.e. if $\rho^{\eps,N}= \mathrm{Law}\bra*{\eps X_{t/\eps^2}^1 ,\dots ,\eps X_{t/\eps^2}^N}$ then what is the limit as $\eps \to 0$ of $\rho^{\eps,N}$.  The answer to this question can be obtained by using classical arguments from periodic homogenisation~\cite[Chapter 20]{pavliotis2008multiscale}\cite{BLP11}. It turns out that $\rho^{\eps,N}$ converges to $\rho^{N,*}$, the solution of the heat equation with a positive definite effective covariance matrix $A^{\eff,N}$(cf. Section~\ref{epsthenN} and Equation ~\eqref{effectiveeq}), which can be obtained by solving a Poisson equation for the generator of the process on $\T^N$ (cf. Equation~\eqref{Poisson}). Another way of reinterpreting this result is by saying that the system of particles~\eqref{R} converge in law to an $N$-dimensional Brownian motion with covariance $A^{\eff,N}$. A natural next question to ask is how does the covariance matrix $A^{\eff,N}$, and by extension the heat equation, behave in the limit as $N \to \infty$.

One could also ask the question the other way around. As discussed previously, for a fixed $\eps>0$, we can pass to the mean field limit  as $N \to \infty$ in $\rho^{\eps,N}$ to obtain  $N$ copies of the solution of the nonlinear McKean--Vlasov equation, $\rho^{\eps, \otimes N}$. The natural question to ask now is whether we can understand the behaviour of $\rho^{\eps,\otimes N}$ as $\eps \to 0$. This dichotomy is illustrated in Figure~\ref{fig:combined}. Starting from the rescaled law $\rho^{\eps,N}$, we can take the limit $\eps \to 0$ first followed by $N \to \infty$ if we move in the clockwise direction or the other way around in the anti-clockwise direction. Whether these two limits commute depends heavily on the ergodic properties of the quotiented process on $\T^N$ and its behaviour in the mean field limit. Our main result asserts that the two limits commute at high temperatures (small $\beta$) and thus the combined limit is well-defined in this regime. However, at low temperatures (large $\beta$) and in particular, in the presence of a phase transition (cf. Definition~\ref{pt}), we can construct initial data such that the two limits do not commute.

The problem of non-commutativity between the mean field and homogenisation limits was also studied in~\cite{GP18}. In this paper, a system of weakly interacting diffusions in a two-scale, locally periodic confining potential subject to a quadratic, Curie-Weiss, interaction potential was considered. It was shown that, although the combined homogenization-mean field limit leads to coarse-grained McKean-Vlasov dynamics that have the same functional form, the effective diffusion (mobility) tensor and the coarse-grained (Fixman) potential are different, depending on the order with which we consider these two limits (for non-separable two-scale potentials). In particular, the phase diagrams for the effective dynamics can be different, depending on the order with which we take the limits. A more striking manifestation of the non-commutativity between the two limits can be observed at small but finite values of $\eps$, the parameter measuring scale separation: it is easy to construct examples where the mean field PDE, for small, finite $\eps$ can have arbitrarily many stationary states, the homogenised McKean--Vlasov equation (corresponding to the choice of sending first $\eps \to 0$ and then $N \to \infty$) is characterised by a convex free energy functional and, thus, a unique steady state.

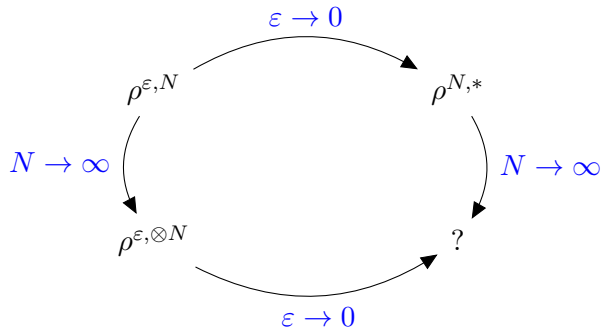
\begin{figure}
\begin{center}
$$
\begin{tikzcd}
\node (X1) at (0,0) {\rho^{\eps,N}};
\node (X2) at (4,0) {\rho^{N,*}};
\draw[>=triangle 45, bend left, ->] (X1) to node [above]{\textcolor{blue}{\eps \to 0}} (X2);
\node (X3) at (0,-2) {\rho^{\eps,\otimes N}};
\draw[>=triangle 45, bend right, ->] (X1) to node [left]{\textcolor{blue}{N \to \infty}} (X3);
\node (X4) at (4,-2) {?};
\draw[>=triangle 45, bend left, ->] (X2) to node [right]{\textcolor{blue}{N \to \infty}} (X4);
\draw[>=triangle 45, bend right, ->] (X3) to node [below]{\textcolor{blue}{\eps \to 0}} (X4);
\end{tikzcd}
$$
\end{center}
\caption{The combined diffusive-mean field limit and the (possible) non-commutativity of the two limits}
\label{fig:combined}
\end{figure}
\subsection{Set up and preliminaries}
We denote by $\T^d$ the $d$ dimensional unit torus (which we identify with $[0,1)^d$) and use the standard notation of $\Leb^p(\T^d)$ and $ \SobH^s(\T^d)$ for the Lebesgue and $\Leb^2$-Sobolev spaces, respectively.  We will use $\SobH^s_0(\T^d)$ to denote the homogeneous $\Leb^2$-Sobolev spaces. We denote by the $C^k(\T^d),C^\infty(\T^d)$ the space of $k$-times ($k \in \N$) continuously differentiable and smooth functions, respectively.    

We denote by $\cP(\Omega)$ the space of all Borel probability measures on $\Omega$ having finite second moment, with $\Omega$ some Polish metric space. We will use $d_1$ and $d_2$ to denote the $1$ and $2$-Wasserstein distances, respectively, on $\cP(\R^d)$ and $\cP(\T^d)$. Similarly we will use $\mathfrak{D}_1$ and $\mathfrak{D}_2$ for the $1$ and $2$-Wasserstein distances, respectively, on $\cP(\cP(\R^d))$ and $\cP(\cP(\T^d))$. In the sequel, any limit of a sequence of measures $\set*{\rho_n}_{n \in \N} \subset \cP(\Omega) $ unless otherwise specified should be understood as a limit in the weak-$*$ topology relative to $C_b(\Omega)$, i.e. tested against bounded, continuous functions. We will often use the same notation for a measure and its density if the density is well-defined.

We consider a large number $N\in\N$ of indistinguishable interacting particles $\{X_t^i\}_{i=1}^N$ in $\R^d$, where both the interaction and confining potentials are periodic and highly oscillatory. In particular, we consider the system
\begin{equation}\label{interacting SDEs}
\begin{cases}
 \dx{X}_t^{\eps,i}= -\eps^{-1}\nabla V(\eps^{-1} X_t^{\eps,i}) - \frac{1}{N} \sum_{j \neq i}^N \eps^{-1} \nabla W(\eps^{-1}(X_t^{\eps,i}-X_t^{\eps,j})) \dx{t} + \sqrt{2 \beta^{-1}}d B_t^i\\
 \mathrm{Law}\; \big((X_0^{\eps,1},...,X_0^{\eps,N})\big)=\rho^{\eps,N}_0,
\end{cases}
\end{equation}
where $W:\R^d\to \R$ and $V:\R^d\to \R$ are smooth $1$-periodic interaction and confining potentials, respectively,  $\eps\ll 1$ is the period size, $\beta>0$ is the inverse temperature, $\rho^{\eps,N}_0\in\mathcal{P}_{\sym}\big((\R^d)^N\big)$ is the initial distribution of the particles which might depend on the period size, and $\{B^i_t\}_{i=1}^N$ are independent Wiener processes. We are interested in understanding the joint limit when the period of oscillations goes to $0$ ($\eps\to 0$) and the number of particles tends to infinity ($N\to\infty$).

We consider the joint law of the particle positions which is given by
$$ 
\rho^{\eps,N}(t)=\,\mathrm{Law}\big((X^{\eps,1}_t,...,X^{\eps,N}_t)\big)\in\mathcal{P}_{\sym}\big((\R^d)^N\big),
$$
where $\cP_{\sym}((\R^d)^N)$ is as defined in~\eqref{symmetric measures}. The law evolves through the following linear forward Kolmogorov or Fokker--Planck equation
\begin{equation}\label{eq:LinearKolmogorov}
\begin{cases}
 \partial_t\rho^{\eps,N}=\beta^{-1} \Delta\rho^{\eps,N}+\nabla\cdot(\nabla H^N_\eps \rho^{\eps,N})&\mbox{on $(0,\infty)\times \big(\R^d\big)^N$}\\
 \rho^{\eps,N}(0)=\rho^{\eps,N}_0(x) &\mbox{on $\big(\R^d\big)^N$}
\end{cases}    
\end{equation}
where $H^N_\eps:(\R^d)^N\to \R$ is given by
\begin{equation}\label{eq:H}
H^N_\eps(x_1,...,x_N)= \sum_{i=1}^N V(\eps^{-1} x_i)+\frac{1}{2N}\sum_{i=1}^N\sum_{\substack{j=1\\j\ne i}}^N W(\eps^{-1}(x_i-x_j)).    
\end{equation}
The main objective of this paper is to study
$$
\lim_{N\to \infty}\lim_{\eps\to 0} \rho^{\eps,N}\qquad\mbox{and}\qquad \lim_{\eps\to 0} \lim_{N\to \infty} \rho^{\eps,N},
$$
and understand under which regimes they coincide or differ.  For the rest of this section we introduce the relevant notions that will play an important role in understanding these limits and present our main results. The result concerning the limit $N \to \infty$ followed by $\eps\to 0$ can be found in Theorem~\ref{thm:variabledata}, while the result concerning the limit $\eps \to 0$ followed by $N \to \infty$ can be found in Theorem~\ref{thm: N then eps}. We discuss the effect of the presence of a phase transition in Section.~\ref{S:explicit}. Finally, in Section~\ref{ftheorem} we discuss the implications of a CLT on the rate of convergence of the Gibbs measure before the phase transition. The proofs of the two main results, Theorems~\ref{thm:variabledata} and~\ref{thm: N then eps}, can be found in Sections~\ref{sec:1thm} and~\ref{sec:2thm}, respectively. The proofs of other useful results related to the phenomenon of phase transitions are relegated to Section~\ref{sec:phase}. Appendix~\ref{ap:coupling}
contains some coupling arguments which are useful for the proof of Theorem~\ref{thm:variabledata}. 

\vspace{0.2cm}

\subsection{The space $\mathcal{P}(\mathcal{P}(\R^d))$ as the limit of $\mathcal{P}_{\sym}\big((\R^d)^N\big)$}

The set up we consider is similar to that in~\cite{carrillo2019proof}. We remark that due to the indistinguishability assumption on the particles their joint law is invariant under relabelling of the particles. In probability this is known as exchangeability, while in analysis this is referred to as symmetry and we denote the set of symmetric probability measures  by $\mathcal{P}_{\sym}\big((\R^d)^N\big)$, i.e.
\begin{align}
\cP_{\sym}((\R^d)^N):= \set*{\rho^N \in \cP((\R^d)^N): \rho^N(A)= \rho^N(\pi(A)), \forall \pi \in\Pi, A \textrm{ measurable}} \, ,
\label{symmetric measures}
\end{align}
where $A$ is any Borel set and $\Pi$ is the set of permutations of the particle positions. Central to our work will be the classical result attributed to de Finetti \cite{deFinetti} and Hewitt--Savage \cite{HewittSavage}, that characterises the limit $N\to\infty$ of $\mathcal{P}_{\sym}\big((\R^d)^N\big)$. Adapted to the set up of this paper, their result can be reformulated as follows:

\begin{definition}\label{def:1}
Given a family $\{\rho^{N}\}_{N\in\N}$ such that $\rho^{N}\in \mathcal{P}_{\sym}((\R^d)^N)$ we say that
$$
\rho^N\to X\in\mathcal{P}(\mathcal{P}(\R^d)) \, , \quad\mbox{as $N \to \infty$} \, ,
$$
if for every $n\in \N$ we have
$$
\rho^N_n\rightharpoonup^* X^n\in \mathcal{P}_{\sym}\big((\R^d)^n\big) \, , \quad\mbox{as $N \to \infty$} \, ,
$$
where $X^n \in \mathcal{P}_{\sym}\big((\R^d)^n\big)$ is defined by duality as follows
$$
\skp*{X^n,\varphi}=\int_{\mathcal{P}(\mathcal{P}(\R^d))}\int \varphi \dx{\rho^{\otimes n}}\;\dx{X}(\rho)  \, ,
$$
for all $\varphi \in C_b((\R^d)^n)$ and
$$
\rho^N_n=\int_{(\R^d)^{N-n}} \rho^N\; \dx{x}_{N-n+1}...dx_{N}\in \mathcal{P}_{\sym}\big((\R^d)^n\big) \, .
$$
We will often suppress the $N \to \infty$ and just write $\rho^N \to X$.
\end{definition}
In particular, we can relate this definition with the usual chaoticity assumption. We will say that $\{\rho^N\}_{N\in \N}$ is chaotic with limit $\rho\in\mathcal{P}(\R^d)$ if 
$$
\rho^N\to \delta_{\rho}\in \mathcal{P}(\mathcal{P}(\R^d)) \, ,
$$
in the sense of Definition~\ref{def:1}.  Additionally, the notion of convergence introduced in Definition~\ref{def:1} can also be interpreted in the following manner:
\begin{definition}[Empirical measure]\label{def:empirical}
Given some $\rho^{N}\in \mathcal{P}_{\sym}((\R^d)^N)$ we define its empirical measure
$\hat{\rho}^{N} \in \cP(\cP(\R^d))$ as follows:
\begin{align}
\hat{\rho}^N := T_N \# \rho^N \, ,
\end{align} 
where $T^N: (\R^d)^N \to \cP(\R^d)$ is the measurable mapping $(x_1, \dots ,x_N)\mapsto N^{-1}\sum_{i=1}^N\delta_{x_i}$.
Furthermore, given a family $\{\rho^{N}\}_{N\in\N}$, we have that $\rho^N \to X \in \cP(\cP(\R^d))$ if and only if $\hat{\rho}^N \rightharpoonup^* X$.
\end{definition}
We conclude this subsection with the following compactness result:
\begin{lemma}[de Finneti--Hewitt--Savage]
Given a sequence $\{\rho^N\}_{N\in\N},$ with $\rho^N\in\mathcal{P}_{\sym}\big((\R^d)^N\big)$ for every $N$, assume that the sequence of the first marginals $\{\rho_1^N\}_{N\in\N}\in\mathcal{P}(\R^d)$ is tight.  Then, up to a subsequence, not relabelled, there exists  $X\in\mathcal{P}(\mathcal{P}(\R^d))$ such that $\rho^N\to X$ in the sense of Definition~\ref{def:1}.
\end{lemma}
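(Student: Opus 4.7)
The plan is to work with the empirical measures $\hat{\rho}^N\in \cP(\cP(\R^d))$ introduced in Definition~\ref{def:empirical} and reduce the statement to a tightness argument on the Polish space $\cP(\R^d)$ followed by Prokhorov's theorem, together with the observation (also noted in Definition~\ref{def:empirical}) that convergence of $\hat{\rho}^N$ in $\cP(\cP(\R^d))$ is equivalent to convergence of $\{\rho^N\}$ in the sense of Definition~\ref{def:1}. In particular, once a weak-$*$ limit $X$ of a subsequence of $\{\hat{\rho}^N\}$ is produced, the representation formula $\skp*{X^n,\varphi}=\int \int \varphi \dx{\rho^{\otimes n}}\dx{X(\rho)}$ will follow automatically.

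For the tightness step, the key observation is the exchange-of-marginals identity, valid for any Borel set $A\subset \R^d$ by symmetry of $\rho^N$:
\begin{equation*}
\int_{\cP(\R^d)}\mu(A)\dx{\hat{\rho}^N(\mu)}=\frac{1}{N}\sum_{i=1}^N \rho^N\bra*{\set*{x_i\in A}}=\rho_1^N(A).
\end{equation*}
By tightness of $\{\rho_1^N\}_{N\in\N}$, for each $k\in\N$ I would pick a compact $C_k\subset \R^d$ with $\sup_N \rho_1^N(C_k^c)<\eta_k$ along any prescribed summable sequence $\eta_k\to 0$. Markov's inequality then gives $\hat{\rho}^N(\set*{\mu:\mu(C_k^c)>\sqrt{\eta_k}})\le \sqrt{\eta_k}$, so that the set $K=\bigcap_k \set*{\mu\in\cP(\R^d):\mu(C_k^c)\le \sqrt{\eta_k}}$, which is uniformly tight and hence weak-$*$ precompact in $\cP(\R^d)$, carries $\hat{\rho}^N$-mass arbitrarily close to one uniformly in $N$. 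This yields tightness of $\{\hat{\rho}^N\}$ in $\cP(\cP(\R^d))$, and Prokhorov's theorem then extracts a subsequence with $\hat{\rho}^N\weakstar X\in\cP(\cP(\R^d))$.

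It then remains to identify this weak-$*$ convergence with the convergence of Definition~\ref{def:1}. For $\varphi\in C_b((\R^d)^n)$, the map $\mu\mapsto \skp*{\mu^{\otimes n},\varphi}$ is bounded and continuous on $\cP(\R^d)$, so weak-$*$ convergence of $\hat{\rho}^N$ gives
\begin{equation*}
\int_{\cP(\R^d)} \skp*{\mu^{\otimes n},\varphi}\dx{\hat{\rho}^N(\mu)} = \frac{1}{N^n}\sum_{i_1,\dots,i_n=1}^N \int \varphi(x_{i_1},\dots,x_{i_n})\dx{\rho^N} \longrightarrow \skp*{X^n,\varphi}.
\end{equation*}
On the other hand, by symmetry of $\rho^N$, the left-hand side differs from $\skp*{\rho_n^N,\varphi}$ only by contributions from tuples $(i_1,\dots,i_n)$ with repeated indices, and these are $O(n^2/N)\cdot \norm*{\varphi}_{\infty}$ in size and therefore vanish as $N\to\infty$. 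Hence $\rho_n^N\weakstar X^n$ in the sense of Definition~\ref{def:1}.

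I expect the main obstacle to be precisely the tightness argument in $\cP(\cP(\R^d))$: one must verify that the candidate set $K$ is actually (weak-$*$) compact in $\cP(\R^d)$, which requires uniform tightness in $\R^d$ with summable quantitative control, and that Markov's inequality combined with the symmetry identity gives mass estimates that are uniform in $N$. The remaining combinatorial/continuity steps are standard once this is in place.
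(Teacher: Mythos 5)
The paper does not prove this lemma itself; it defers to the cited references (Hauray--Mischler, Rougerie, Carrillo et al.), so there is no internal proof to compare against. Your argument reproduces the standard Hewitt--Savage route that those references use: transfer to the empirical measure $\hat{\rho}^N\in\cP(\cP(\R^d))$, use the symmetry identity $\int\mu(A)\,\dx{\hat{\rho}^N(\mu)}=\rho_1^N(A)$ to propagate tightness of the first marginals to tightness of $\{\hat{\rho}^N\}$, extract a weak-$*$ limit by Prokhorov, and identify the $n$-th marginals by the $U$-statistic computation showing the diagonal (repeated-index) terms vanish at rate $O(n^2/N)$. This is the right structure and all the key ideas are present.

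One small quantitative slip is worth fixing: you require $\{\eta_k\}$ to be summable, but the Markov estimate gives $\hat{\rho}^N(K^c)\le\sum_k\sqrt{\eta_k}$, so it is $\sum_k\sqrt{\eta_k}$ that must be small — summability of $\eta_k$ alone does not guarantee this (take $\eta_k=k^{-3/2}$). Moreover, as written you produce a single set $K$, which cannot simultaneously have mass $>1-\eps$ for every $\eps$; the construction must be carried out for each $\eps>0$, e.g.\ by choosing compacts $C_k^{\eps}$ with $\sup_N\rho_1^N((C_k^{\eps})^c)<\eps^2 4^{-k}$ so that $\sum_k\sqrt{\eta_k}=\eps$, yielding a family $\{K_\eps\}$ that witnesses tightness. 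With that adjustment the argument is correct. (Your set $K$ is indeed closed, since $\mu\mapsto\mu(C_k^c)$ is lower semicontinuous for the open set $C_k^c$, and uniformly tight, hence weak-$*$ compact; and the continuity of $\mu\mapsto\langle\mu^{\otimes n},\varphi\rangle$ used in the identification step is the standard fact that weak convergence on a Polish space is preserved under finite tensor products.)
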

For a proof and more details, see \cite{hauray2014kac,rougerie2015finetti,carrillo2019proof}. In the sequel, any limit of a sequence  of symmetric measures $\set{\rho^N}_{N \in \N}$ with $\rho^N \in \cP_{\sym}((\R^d)^N)$ should be understood in the sense of Definition~\ref{def:1}.
\begin{remark}
The above notion of convergence, i.e. Definitions~\ref{def:1} and~\ref{def:empirical}, can be naturally extended to $\cP_{\sym}((\T^d)^N)$. 
\end{remark}
\subsection{Gradient flow formulation and the mean field limit}  \label{gradientflow}

In \cite{carrillo2019proof}, the mean field limit (the limit $N\to\infty$) of the interacting particle system \eqref{eq:LinearKolmogorov} is achieved by passing to the limit in the 2-Wasserstein gradient flow structure. The results of this article will build on this perspective which we briefly recall here:

The evolution of the joint law $\rho^{\eps,N}$ given by \eqref{eq:LinearKolmogorov} is the gradient flow (in the sense of~\cite[Definition 11.1.1]{ambrosio2008gradient}) of the energy $E^N: \cP_{\sym}((\R^d)^N) \to (-\infty,+\infty]$
\begin{equation}\label{Energyperparticle}
E^N[\rho^N]:=\frac{1}{N}\left(\beta^{-1}\int_{(\R^d)^N}\rho^N\log\rho^N\;\dx{x}+\int_{(\R^d)^N}H^N_\eps(x)\;\dx{\rho^N}(x) \right)    \, ,
\end{equation}
under the rescaled 2-Wasserstein distance $\frac{1}{\sqrt{N}}d_2$ on $\mathcal{P}_{\sym}\big((\R^d)^N\big)$. Moreover, we have the following  classical result of Messer and Spohn \cite{messer1982statistical}:

\begin{lemma}\label{lem:gamma}
 The $N$-particle free energy $E^N$ $\Gamma$-converges to $E^\infty:\mathcal{P}(\mathcal{P}(\R^d))\to (-\infty,+\infty]$, where
\begin{equation}\label{}
    E^\infty[X]=\int_{\mathcal{P}(\R^d)}E_{MF}[\rho]\;\dx{X}(\rho) \, ,
\end{equation}
with $E_{MF}:\mathcal{P}(\R^d)\to (-\infty,+\infty]$ given by
\begin{equation}
    E_{MF}[\rho]=\beta^{-1} \int_{\R^d} \rho\log(\rho)\;\dx{x}+\int_{\R^d} V(\eps^{-1}x)\dx{\rho(x)}+\frac{1}{2}\iint_{\R^d \times \R^d}W(\eps^{-1}(x-y))\dx{\rho}(y)\dx{\rho}(x).
\end{equation}
That is to say, for every $X \in \cP(\cP(\R^d))$ there exists a sequence $\set*{\rho^N}_{N \in \N} $, $\rho^N \in \cP_{\sym}((\R^d)^N)$ with $\rho^N \to X$  such that
\begin{align}
\lim_{N \to \infty} E^N[\rho^N]= E^\infty[X] \, .
\end{align}
Additionally, for every $X \in \cP(\cP(\R^d))$ and $\set*{\rho^N}_{N \in \N}$, $\rho^N \in \cP_{\sym}((\R^d)^N)$ with $\rho^N \to X$
it holds that
\begin{align}
E^\infty[X] \leq \liminf_{N \to \infty}E^N[\rho^N] \, .
\end{align}
\end{lemma}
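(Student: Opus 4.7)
The plan is to prove the limsup (recovery sequence) and $\Gamma$-liminf halves separately, splitting the energy into its entropic and interaction contributions in each case. The natural recovery sequence is the i.i.d.\ mixture
$$
\rho^N := \int_{\cP(\R^d)} \rho^{\otimes N}\,\dx{X}(\rho) \in \cP_{\sym}((\R^d)^N),
$$
whose $n$-particle marginal equals $X^n = \int \rho^{\otimes n}\,\dx{X}(\rho)$ exactly, so $\rho^N \to X$ in the sense of Definition~\ref{def:1} is built in. Using symmetry the interaction energy reduces to
$$
\frac{1}{N}\int H^N_\eps\,\dx{\rho^N} = \int V(\eps^{-1}x)\,\dx{\rho^N_1}(x)+\frac{N-1}{2N}\iint W(\eps^{-1}(x-y))\,\dx{\rho^N_2}(x,y),
$$
which converges to the interaction part of $E^\infty[X]$ by narrow convergence of $\rho^N_k \to X^k$ together with boundedness of $V, W$. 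For the entropy along the recovery sequence, Jensen's inequality in the convex variable $\rho$ gives $\int \rho^N \log \rho^N\,\dx{x} \leq N \int \int \rho \log \rho\,\dx{x}\,\dx{X}(\rho)$, yielding $\limsup_N E^N[\rho^N] \leq E^\infty[X]$; the matching lower bound will be a consequence of the general liminf estimate.

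For the $\Gamma$-liminf, let $\rho^N \to X$ be arbitrary (assumed to have uniformly bounded free energy). The interaction term is again controlled by bounded continuity of $V(\eps^{-1}\cdot), W(\eps^{-1}\cdot)$ and narrow convergence of the first two marginals. The heart of the argument is the entropy liminf
$$
\liminf_{N \to \infty}\frac{1}{N}\int \rho^N \log \rho^N\,\dx{x} \geq \int_{\cP(\R^d)}\int \rho \log \rho\,\dx{x}\,\dx{X}(\rho).
$$
The key tool is the superadditivity of Boltzmann entropy across coordinate blocks: non-negativity of the relative entropy of $\rho^N$ with respect to $\rho^N_k \otimes \rho^N_{N-k}$ yields
$$
\int \rho^N \log \rho^N\,\dx{x} \geq \int \rho^N_k \log \rho^N_k\,\dx{x} + \int \rho^N_{N-k} \log \rho^N_{N-k}\,\dx{x}.
$$
Iterating on $\floor{N/k}$ disjoint blocks and using symmetry to identify every $k$-marginal with $\rho^N_k$ gives $\frac{1}{N}\int \rho^N \log \rho^N \geq \frac{\floor{N/k}}{N} \int \rho^N_k \log \rho^N_k + O(1/N)$, and passing $N \to \infty$ with $k$ fixed (using lower semicontinuity of the entropy against $\rho^N_k \rightharpoonup^* X^k$) produces $\liminf_N \frac{1}{N}\int \rho^N \log \rho^N \geq \frac{1}{k}\int X^k \log X^k$ for every $k$.

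The final and hardest step is the $k \to \infty$ passage, i.e.\ the specific-entropy identification
$$
\lim_{k \to \infty}\frac{1}{k}\int X^k \log X^k\,\dx{x} = \int_{\cP(\R^d)} \int \rho \log \rho\,\dx{x}\,\dx{X}(\rho).
$$
Existence of this limit (as a supremum) follows from applying the same superadditivity to the consistent exchangeable family $\{X^k\}_k$ and invoking Fekete's lemma, while Jensen's inequality supplies the matching upper bound. Equality is the Robinson--Ruelle identification of the specific entropy of an exchangeable measure with the $X$-average of the single-particle entropy: it can be read directly off de Finetti's theorem once one observes that the extremal exchangeable measures are precisely the infinite products $\rho^{\otimes \infty}$, on which entropy is additive. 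This last identification is where the main technical difficulty sits; feeding it back closes the $\Gamma$-liminf, and combined with the Jensen bound on the recovery sequence also yields the matching limsup.
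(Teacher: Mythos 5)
The paper states this lemma without proof, attributing it to Messer and Spohn~\cite{messer1982statistical} and to the framework of~\cite{carrillo2019proof}; your sketch is precisely the classical argument those references use, so you have reproduced the ``paper's'' approach. The recovery sequence $\rho^N=\int\rho^{\otimes N}\,\dx X(\rho)$ with a Jensen bound on the entropy, the superadditivity/relative-entropy block decomposition for the liminf, and the Robinson--Ruelle identification of the specific entropy of an exchangeable measure via de Finetti are exactly the standard ingredients.

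One technical point to flag, which the paper itself glosses over in the $\R^d$ statement: because $V(\eps^{-1}\cdot)$ is periodic and bounded (not confining), the Boltzmann entropy $\int\rho\log\rho$ is neither bounded below on $\cP(\R^d)$ nor lower semicontinuous with respect to narrow convergence in general (mass can spread to infinity, driving the entropy to $-\infty$). Your ``lower semicontinuity of the entropy against $\rho^N_k\rightharpoonup^* X^k$'' step and the control of the remainder block $\frac{1}{N}\int\rho^N_r\log\rho^N_r\to 0$ both require a uniform tightness or moment bound on the sequence of marginals (equivalently, comparing entropy to a reference Gaussian with finite second moment). Without stating such an a priori bound the liminf argument has a gap on $\R^d$; on the torus version (Lemma~\ref{lem:gammaperiodic}) the issue disappears since Lebesgue measure is a finite reference measure. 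It would be worth making the tightness hypothesis explicit, or noting that the result is used in the paper only after the periodic rearrangement reduces to the compact case.
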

On the other hand we have a similar convergence for the metrics: $\frac{1}{\sqrt{N}}d_2$ the rescaled 2-Wasserstein distance on $\mathcal{P}_{\sym}\big((\R^d)^N\big)$ converges to $\mathfrak{D}_2$ the 2-Wasserstein distance on $\mathcal{P}(\mathcal{P}(\R^d))$. Specifically, given two sequences $\{\mu^N\}_{N\in\N}$ and $\{\nu^N\}_{N\in\N}$ of symmetric probability measures such that $\mu^N\to X_1$ and $\nu^N\to X_2$, then
$$
\frac{1}{N}d_2^2(\mu^N, \nu^N)\to \mathfrak{D}_2^2(X_1,X_2).
$$
We can now state our result concerning the mean field limit, i.e. the limit $N \to \infty$:
\begin{customthm}{A.}[Mean field limit]\label{thma}
Fix some $t>0$, then,
$$
    \lim_{N\to\infty}\rho^{\eps, N}(t)=X^\eps(t) \in \mathcal{P}(\mathcal{P}(\R^d)),
$$
Furthermore, we have that the curve $X^\eps: [0,\infty) \to \cP(\cP(\R^d))$ is a gradient flow of $E^\infty$ under the 2-Wasserstein metric $\mathfrak{D}_2$. Moreover,
\begin{align}\label{mfl}
X^\eps(t)=S_t^\eps\# X^\eps_0,
\end{align}
where $X^\eps_0=\lim_{N\to\infty}\rho^{\eps,N}_0$ and $S_t^\eps:\mathcal{P}(\R^d)\to \mathcal{P}(\R^d)$ is the solution semigroup associated to the nonlinear McKean--Vlasov evolution equation 
\begin{equation}\label{nonlineareq}
    \partial_t \rho^\eps=\beta^{-1} \Delta\rho^\eps +\nabla\cdot(\rho^\eps (\nabla W_\eps\ast\rho^\eps+\nabla V_\eps)),
\end{equation}
with $W_\eps(x)=W(\eps^{-1}x)$ and $V_\eps(x)=V(\eps^{-1}x)$.  
\end{customthm}

\vspace{0.2cm}

\subsection{Scaling and the quotiented process}\label{sandq}
We notice that the Fokker--Planck equation~\eqref{eq:LinearKolmogorov} behaves well under the parabolic scaling, i.e. given a 
solution $\rho^{\eps,N}$ of~\eqref{eq:LinearKolmogorov} we have that
\begin{align}
\nu^{N}(s,y)= \eps^{N d}\rho^{\eps,N}(\eps^{2}s,\eps y) \, ,\label{unscaling}
\end{align}
is the solution to the Fokker--Planck equation at scale $\eps=1$, i.e.
\begin{align}\label{realeps=1}
\partial_s \nu^N= \beta^{-1}\Delta \nu^N + \nabla \cdot \bra*{\nabla H^N_1 \nu^{N}} , (s,y) \in (0,\infty) \times (\R^d)^N.
\end{align}
The above equation naturally describes the evolution of the law of $N$-particle system~\eqref{interacting SDEs} at scale $\eps=1$:
\begin{equation}\label{unscaled interacting SDEs}
\begin{cases}
 \dx{X}_t^{i}= -\nabla V( X_t^{i}) - \frac{1}{N} \sum_{j \neq i}^N  \nabla W(X_t^{i}-X_t^{j}) \dx{t} + \sqrt{2 \beta^{-1}}d B_t^i\\
 \mathrm{Law}\; \big(X_0^{1},...,X_0^{N}\big)=\eps^{Nd}\rho^{\eps,N}_0(\eps x):=\nu^N_0.
\end{cases}
\end{equation}
Since $W$ and $V$ are periodic, in order to that to understand the behaviour of $\rho^{\eps,N}$ in the limit as $\eps \to 0$, we must first understand the behaviour of the quotiented process $\set{\dot{X}_t^i}_{i=1}^N$ of~\eqref{unscaled interacting SDEs} which lives on $(\T^{d})^N$~\cite[Section 9.1]{KLO12}\cite[Section 3.3.2]{BLP11}. Before we introduce the quotiented process, we define the following notion which will play an important role in the rest of the paper:
\begin{definition}\label{rearrangement}
Given a measure $\rho \in \cP(\R^d)$ we define its periodic rearrangement at scale $\eps>0$ to be the measure $\tilde{\rho} \in \cP(\T^d)$, such that for any measurable $A\subset \T^d$ it holds that
\begin{align}
\tilde{\rho}(A):= \eps^d\sum_{k \in \Z^d}\rho(\eps(A +k)) \, .
\end{align}  
We will often just use the words periodic rearrangement when $\eps=1$.
\end{definition}
Given the above notion, we have that quotient process $\set{\dot{X}_t^i}_{i=1}^N$ satisfies the following set of SDEs posed on the torus:
\begin{equation}\label{periodic interacting SDEs}
\begin{cases}
 \dx{\dot{X}}_t^{i}= -\nabla V( \dot{X}_t^{i}) - \frac{1}{N} \sum_{j \neq i}^N  \nabla W(\dot{X}_t^{i}-\dot{X}_t^{j}) \dx{t} + \sqrt{2 \beta^{-1}}d \dot{B}_t^i\\
 \mathrm{Law}\; \big(\dot{X}_0^{1},...,\dot{X}_0^{N}\big)=\tilde{\nu}^N_0 \, ,
\end{cases}
\end{equation}
where $\dot{B}^i_t$ are independent $\T^d$-valued Brownian motions and $\tilde{\nu}^N_0$ is the periodic rearrangement of $\nu^N_0$ in the sense of Definition~\ref{rearrangement}. One can check that the process $\set{\dot{X}_t^i}_{i=1}^N$ is a reversible
ergodic diffusion process with its unique invariant or Gibbs measure $M_N \in \cP_{\sym}((\T^d)^N)$ given by
\begin{align}
M_N(x)=\frac{e^{-H_1^N(x)}}{\int_{\bra*{\mathbb{T}^d}^N}e^{-H_1^N(y)}\;\dx{y}} \, .
\end{align}
As expected, the law $\tilde{\nu}^N(t)$ of the quotiented system~\eqref{periodic interacting SDEs} can be obtained by considering the periodic rearrangement of $\nu^N(t)$, the solution of~\eqref{realeps=1} and it evolves according to the following PDE:
\begin{align}\label{periodicNeps=1}
\partial_s \tilde{\nu}^N= \beta^{-1}\Delta \tilde{\nu}^N + \nabla \cdot \bra*{\nabla H^N_1 \tilde{\nu}^N} , \quad (s,y) \in (0,\infty) \times (\R^d)^N  \, .
\end{align}
In analogy to the discussion in Section~\ref{gradientflow}, the above PDE is the gradient flow of the following $N$-particle periodic free energy:
\begin{equation}\label{Energyperparticleperiodic}
\tilde{E}^N[\tilde{\nu}^N]:=\frac{1}{N}\left(\beta^{-1}\int_{(\T^d)^N}\tilde{\nu}^N\log\tilde{\nu}^N\;\dx{x}+\int_{(\T^d)^N}H^N_1(x)\;\dx{\tilde{\nu}^N}(x) \right)  \, ,
\end{equation}
under the rescaled $2$-Wasserstein distance $\frac{1}{\sqrt{N}}d_2$ on $\cP_{\sym}((\T^d)^N)$. Furthermore, the Gibbs measure $M_N$ of the process~\eqref{periodic interacting SDEs} is the unique minimiser of $\tilde{E}^N$.

Similarly, we also notice that the nonlinear McKean--Vlasov equation \eqref{nonlineareq} behaves well under the parabolic scaling. Specifically, given $\rho^\eps$ a solution to \eqref{nonlineareq}, then
$$
\nu(s,y)=\eps^d\rho^\eps(\eps^{2}s,\eps y)
$$
is a solution to the McKean--Vlasov equation at scale $\eps=1$,
\begin{equation}\label{eps=1}
 \partial_s \nu=\beta^{-1} \Delta\nu +\nabla\cdot(\nu (\nabla W\ast\nu+\nabla V))\qquad\mbox{on $(0,\infty)\times\R^d.$}   
\end{equation}
It is well known that this describes the law of the corresponding mean field McKean SDE which is given by
\begin{align}
\begin{cases} \label{eq:mckeanSDEintro}
\dx{Y}_t^\eps &= -\nabla V(Y_t^\eps)\dx{t} - \nabla (W \ast \nu (t))(Y_t^\eps)\dx{t} + \sqrt{2 \beta^{-1}} dB_t \\
\mathrm{Law}(Y_0^\eps)&= \nu_0^\eps=\eps^d\rho_0^\eps(\eps x) \in \cP(\R^d) \, .
\end{cases}
\end{align}
Again, we notice that all the coefficients in~\eqref{eps=1} are 1-periodic. Therefore, the nonlinearity $\nabla W\ast\nu$ only depends on the law of the quotiented process. We can thus understand the behaviour of the nonlinearity $\nabla W \ast \nu$ by considering the evolution  of the periodic rearrangement $\tilde{\nu}(t)$ of $\nu(t)$, which solves the periodic nonlinear McKean--Vlasov equation:
\begin{equation}\label{periodiceps=1}
 \partial_s \tilde{\nu}=\beta^{-1} \Delta\tilde{\nu} +\nabla\cdot(\tilde{\nu} (\nabla W\ast\tilde{\nu}+\nabla V))  \qquad\mbox{on $(0,\infty)\times\T^d.$}
\end{equation}
An important role is thus played by the limiting behaviour of solutions $\tilde{\nu}(t)$ of the above equation and its steady states. As in Section~\ref{gradientflow}, the equation~\eqref{periodiceps=1} is the gradient flow of the periodic mean field free energy
\begin{equation}\label{periodicmeanfieldenergy}
     \tilde{E}_{MF}[\tilde{\nu}]=\beta^{-1}\int_{\T^d} \tilde{\nu}(x)\log(\tilde{\nu}(x))\;\dx{x}+\int_{\T^d} V(x)\;\dx{\tilde{\nu}}(x)+\frac{1}{2}\int_{\T^d}W\ast\tilde{\nu}(x)\;d{\tilde{\nu}(x)} \, ,
\end{equation}
with respect to the the $2$-Wasserstein metric on $\T^d$ and the energies $\tilde{E}^N$ and $\tilde{E}^{MF}$ are related in the same way as the energies $E^N$ and $E_{MF}$, i.e. through the result of Messer and Spohn~\cite{messer1982statistical}:
\begin{lemma}\label{lem:gammaperiodic}
 The $N$-particle periodic free energy $\tilde{E}^N$ $\Gamma$-converges (in the sense of Lemma~\ref{lem:gamma}) to $\tilde{E}^\infty:\mathcal{P}(\mathcal{P}(\T^d))\to (-\infty,+\infty]$, where
\begin{equation}\label{}
    \tilde{E}^\infty[X]=\int_{\mathcal{P}(\T^d)}\tilde{E}_{MF}[\tilde{\nu}]\;\dx{X}(\tilde{\nu}) \, .
\end{equation}
As a consequence, if $\{M_N\}_{N\in\N}$ is the sequence of minimisers of $\tilde{E}^N$, then any accumulation point $X\in\mathcal{P}(\mathcal{P}(\T^d))$ of this sequence is a minimiser of $\tilde{E}^\infty$.
\end{lemma}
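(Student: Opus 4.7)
The plan is to run the same Messer--Spohn style $\Gamma$-convergence argument as in Lemma~\ref{lem:gamma}, but in the compact setting $\T^d$ in place of $\R^d$. The compactness of the torus actually simplifies matters considerably: the potentials $V$ and $W$ are automatically bounded and continuous, tightness of marginals is automatic, and the ambient space $\cP(\cP(\T^d))$ is compact in the weak-$*$ topology, so no confining potential or moment bounds are required. I would start by using the symmetry of $\tilde{\nu}^N$ and the explicit form of $H^N_1$ to rewrite
\begin{equation*}
\tilde{E}^N[\tilde{\nu}^N] = \frac{\beta^{-1}}{N}\int_{(\T^d)^N}\tilde{\nu}^N\log \tilde{\nu}^N\,\dx{x} + \int_{\T^d}V\,\dx{\tilde{\nu}^N_1} + \frac{N-1}{2N}\int_{\T^d\times\T^d}W(x-y)\,\dx{\tilde{\nu}^N_2}(x,y),
\end{equation*}
so that the interaction terms depend only on the first and second marginals. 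For the liminf inequality, if $\tilde{\nu}^N \to X$ in the sense of Definition~\ref{def:1}, then $\tilde{\nu}^N_n \weakstar X^n$ for every $n$, and by continuity of $V, W$ on $\T^d$ the two potential terms pass to the limit to produce exactly the $V$- and $W$-contributions of $\tilde{E}^\infty[X]$.

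The main obstacle is the matching lower bound for the normalised entropy, i.e.\ establishing
\begin{equation*}
\liminf_{N \to \infty} \frac{1}{N}\int_{(\T^d)^N} \tilde{\nu}^N \log \tilde{\nu}^N\,\dx{x} \;\geq\; \int_{\cP(\T^d)} \int_{\T^d} \tilde{\nu}\log\tilde{\nu}\,\dx{x}\,\dx{X}(\tilde{\nu}).
\end{equation*}
This is a Robinson--Ruelle type inequality and is precisely the technical heart of Messer--Spohn. I would follow the standard route: use the variational (Donsker--Varadhan) characterisation of relative entropy to bound $\frac{1}{N}\mathcal{H}(\tilde{\nu}^N)$ from below by $\frac{1}{n}\mathcal{H}(\tilde{\nu}^N_n)$ for any fixed $n\leq N$ (using symmetry to decompose $\{1,\dots,N\}$ into $\lfloor N/n\rfloor$ blocks and superadditivity of entropy across independent blocks, with a vanishing remainder), then send $N\to\infty$ using weak lower semicontinuity of $\mathcal{H}(\cdot)$ on $\cP((\T^d)^n)$ to obtain $\liminf \frac{1}{N}\mathcal{H}(\tilde{\nu}^N)\geq \frac{1}{n}\mathcal{H}(X^n)$, and finally let $n\to\infty$ and invoke the de Finetti representation $X^n=\int\tilde{\nu}^{\otimes n}\,\dx{X}$ together with $\frac{1}{n}\mathcal{H}(\tilde{\nu}^{\otimes n})=\mathcal{H}(\tilde{\nu})$ and Fatou's lemma.

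For the recovery sequence, given an arbitrary $X\in\cP(\cP(\T^d))$ with $\tilde{E}^\infty[X]<\infty$, I would take the chaotic mixture
\begin{equation*}
\tilde{\nu}^N := \int_{\cP(\T^d)} \tilde{\nu}^{\otimes N}\,\dx{X}(\tilde{\nu}) \in \cP_{\sym}((\T^d)^N).
\end{equation*}
By construction $\tilde{\nu}^N_n=\int \tilde{\nu}^{\otimes n}\,\dx{X}(\tilde{\nu})=X^n$ for every $n\leq N$, so $\tilde{\nu}^N \to X$ in the sense of Definition~\ref{def:1}, and the interaction terms converge to the corresponding contributions of $\tilde{E}^\infty$. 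Joint convexity of the entropy functional gives $\mathcal{H}(\tilde{\nu}^N)\leq \int \mathcal{H}(\tilde{\nu}^{\otimes N})\,\dx{X}(\tilde{\nu}) = N \int \mathcal{H}(\tilde{\nu})\,\dx{X}(\tilde{\nu})$, producing the matching upper bound $\limsup_N \tilde{E}^N[\tilde{\nu}^N]\leq \tilde{E}^\infty[X]$.

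Finally, the statement about minimisers follows from the $\Gamma$-convergence by the standard fundamental theorem, once equi-coercivity is available. Here the first marginals $(M_N)_1$ are trivially tight on the compact torus, so by the de Finetti--Hewitt--Savage lemma any subsequence of $\{M_N\}$ admits a further subsequence converging to some $X\in\cP(\cP(\T^d))$. Combining the liminf inequality applied to $M_N$ with a recovery sequence for an arbitrary competitor $Y$ yields $\tilde{E}^\infty[X]\leq \liminf_N \tilde{E}^N[M_N]\leq \liminf_N \tilde{E}^N[\tilde{\nu}^N_Y]=\tilde{E}^\infty[Y]$, so $X$ is a minimiser of $\tilde{E}^\infty$.
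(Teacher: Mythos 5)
Your proposal fills in the detail behind a result that the paper itself does not re-prove: Lemma~\ref{lem:gammaperiodic} is stated as a periodic analogue of Lemma~\ref{lem:gamma} and is simply attributed to Messer and Spohn~\cite{messer1982statistical}. Your argument follows exactly that template --- rewriting $\tilde{E}^N$ in terms of the first two marginals, handling the interaction terms by weak-$*$ convergence against the bounded continuous kernels $V$ and $W$ on the compact torus, obtaining the entropy liminf via block superadditivity and marginal lower semicontinuity, building the recovery sequence from the de~Finetti mixture $\int\tilde{\nu}^{\otimes N}\,\dx{X}(\tilde{\nu})$, and deducing the minimiser statement via compactness of $\cP(\cP(\T^d))$ plus the standard fundamental theorem of $\Gamma$-convergence. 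This is the right route and the decomposition, recovery sequence, and compactness argument are all correct; in particular your observation that on $\T^d$ the entropy $\int f\log f$ is nonnegative (so the remainder block can be discarded) and that no moment/tightness hypotheses are needed is exactly what makes the torus case simpler than Lemma~\ref{lem:gamma}.

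The one step that is not correct as written is the final passage $n\to\infty$ in the entropy lower bound. You invoke ``the de Finetti representation together with $\frac{1}{n}\mathcal{H}(\tilde{\nu}^{\otimes n})=\mathcal{H}(\tilde{\nu})$ and Fatou's lemma'' to conclude $\lim_n \frac{1}{n}\mathcal{H}(X^n)=\int\mathcal{H}(\tilde{\nu})\,\dx{X}(\tilde{\nu})$, but convexity of $\mu\mapsto\int\mu\log\mu$ together with Fatou only yields the \emph{upper} bound $\frac{1}{n}\mathcal{H}(X^n)\le\int\mathcal{H}(\tilde{\nu})\,\dx{X}(\tilde{\nu})$, which is the wrong direction here. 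The inequality you actually need, $\liminf_n \frac{1}{n}\mathcal{H}(X^n)\ge\int\mathcal{H}(\tilde{\nu})\,\dx{X}(\tilde{\nu})$, is the nontrivial content of the Robinson--Ruelle theorem on affinity of the mean entropy on the simplex of exchangeable states; it does not follow from Fatou and requires its own argument (e.g. the block/binomial estimate $\mathcal{H}\big((\lambda X_1+(1-\lambda)X_2)^n\big)\ge \lambda\mathcal{H}(X_1^n)+(1-\lambda)\mathcal{H}(X_2^n)-h(\lambda)$ followed by an approximation of $X$ by finite mixtures). You correctly identify this as ``the technical heart of Messer--Spohn,'' so you are aware of the difficulty, but the proof as written leaves this genuine gap: replace the Fatou appeal with an explicit citation to, or proof of, the Robinson--Ruelle affinity result, and the argument closes.
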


We can use the gradient flow structure to provide a useful characterisation of the steady states of the periodic McKean--Vlasov system~\eqref{periodiceps=1}.
\begin{proposition}\label{tfae}
Let $\tilde{\nu} \in \cP(\T^d)$. Then, the following statements are equivalent:
\begin{enumerate}
\item $\tilde{\nu}$ is a steady state of~\eqref{periodiceps=1}.
\item $\tilde{\nu}$ is a critical point of the mean field free energy, $\tilde{E}_{MF}$, i.e. the metric slope (cf.~\cite[Definition 1.2.4]{ambrosio2008gradient}) $\abs*{\partial \tilde{E}_{MF}}(\tilde{\nu})=0$.
\item $\tilde{\nu}$ is a zero of the dissipation functional $D: \cP(\T^d) \to (-\infty,+\infty]$, i.e.
\begin{align}
D(\tilde{\nu})= \intt{\abs*{\nabla\log\frac{\tilde{\nu}}{e^{-\beta\bra*{ W \ast \nu + V}}}}^2\tilde{\nu}}=0 \,.
\end{align}
\item $\tilde{\nu}$ satisfies the self-consistency equation
\begin{align}\label{eq:critical point}
    \tilde{\nu} =\frac{e^{-\beta (V + W\ast\tilde{\nu})}}{Z},     
\end{align}
with the partition function given by
\begin{align}\label{partitionfunction}
    Z=\int_{\T^d}e^{-\beta (V + W\ast\tilde{\nu}(y))}\dx{y}.
\end{align}
\end{enumerate}
\end{proposition}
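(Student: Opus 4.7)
The plan is to establish a cycle of implications $(4) \Rightarrow (1) \Rightarrow (3) \Rightarrow (4)$, treating the self-consistency form as the most convenient anchor, and then to address $(2) \Leftrightarrow (3)$ separately by invoking the characterisation of the metric slope of a free-energy functional from~\cite{ambrosio2008gradient}. Throughout, one exploits the reformulation of the flux in~\eqref{periodiceps=1} as
\begin{equation*}
\partial_s \tilde\nu = \nabla \cdot \bra*{\tilde\nu \nabla \bra*{\beta^{-1}\log\tilde\nu + V + W \ast \tilde\nu}},
\end{equation*}
so that the dissipation functional $D(\tilde\nu)$ is precisely $\beta^2$ times $\int |\nabla(\beta^{-1}\log\tilde\nu + V + W\ast\tilde\nu)|^2\,\tilde\nu\,\dx{x}$.

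For $(4) \Rightarrow (1)$: if $\tilde\nu = Z^{-1}e^{-\beta(V + W\ast\tilde\nu)}$, then $\beta^{-1}\nabla\log\tilde\nu = -\nabla(V + W\ast\tilde\nu)$, and plugging this into the flux form above gives $\partial_s\tilde\nu = 0$. For $(1) \Rightarrow (3)$: given a steady state, multiply the identity $\nabla \cdot(\tilde\nu \nabla\mu) = 0$, with $\mu := \beta^{-1}\log\tilde\nu + V + W\ast\tilde\nu$, by $\mu$ and integrate by parts on $\T^d$ to obtain $\int |\nabla\mu|^2\tilde\nu\,\dx{x} = 0$, i.e.\ $D(\tilde\nu)=0$. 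Here one needs $\tilde\nu$ to have enough regularity and to be strictly positive; this follows from standard parabolic/elliptic regularity together with a Harnack inequality applied to the linear equation with coefficient $V + W\ast\tilde\nu \in C^\infty(\T^d)$.

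For $(3) \Rightarrow (4)$: if $D(\tilde\nu) = 0$, then $\nabla\mu = 0$ $\tilde\nu$-a.e.; combined again with strict positivity of $\tilde\nu$ (which also follows from the finiteness of the entropy/dissipation), $\nabla\mu$ vanishes identically on the connected torus $\T^d$, so $\mu$ is constant, and exponentiating recovers~\eqref{eq:critical point} with $Z$ as in~\eqref{partitionfunction} (determined by the unit-mass constraint). The main subtlety here is the rigorous justification of smoothness: one shows that any $\tilde\nu$ with $D(\tilde\nu) < \infty$ has $\sqrt{\tilde\nu} \in \SobH^1(\T^d)$, and the self-consistency manipulation can then be carried out on the support of $\tilde\nu$; strict positivity then follows a posteriori from the representation in~\eqref{eq:critical point}.

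For $(2) \Leftrightarrow (3)$: this is the abstract step, and is the part I would expect to be the most delicate to cite precisely. The free energy $\tilde E_{MF}$ is the sum of the Boltzmann entropy (geodesically convex), a linear potential term, and a regular interaction functional with smooth kernel $W$, so it falls into the class of functionals for which~\cite[Theorem 10.4.13]{ambrosio2008gradient} identifies the Wasserstein metric slope with the $L^2(\tilde\nu)$-norm of the variational first-order condition, namely $|\partial \tilde E_{MF}|^2(\tilde\nu) = D(\tilde\nu)$. From this formula the equivalence of vanishing slope and vanishing dissipation is immediate. The one technicality is verifying the regularity hypotheses of the AGS theorem for the interaction part $\tilde\nu \mapsto \tfrac12 \int W \ast \tilde\nu\,\dx{\tilde\nu}$; this is standard under the smoothness assumption on $W$ made at the outset.
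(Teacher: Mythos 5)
The paper does not give its own proof of this proposition; it defers to~\cite[Proposition 2.4]{CGPS19} and~\cite{Tam84}. So there is no in-text argument to compare against, and I'll evaluate your sketch on its merits.

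The overall structure --- the cycle $(4)\Rightarrow(1)\Rightarrow(3)\Rightarrow(4)$ using the flux/dissipation form of~\eqref{periodiceps=1}, plus $(2)\Leftrightarrow(3)$ via the Ambrosio--Gigli--Savar\'e identification of the metric slope with the dissipation --- is the standard one and is correct in spirit. Two remarks. First, a harmless constant: with $\mu:=\beta^{-1}\log\tilde\nu+V+W\ast\tilde\nu$ one has $|\partial\tilde E_{MF}|^2(\tilde\nu)=\int|\nabla\mu|^2\tilde\nu=\beta^{-2}D(\tilde\nu)$, not $D(\tilde\nu)$ itself; this does not affect the equivalence of vanishing, but the exact identity you state is off by $\beta^{-2}$.

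The substantive issue is in $(3)\Rightarrow(4)$, and the two justifications you offer for strict positivity both fail as written. It is not true that finiteness of the entropy or of the dissipation forces $\tilde\nu>0$ everywhere: for instance $\sqrt{\tilde\nu}=c\,(\sin 2\pi x)_+$ is Lipschitz on $\T$, so $\sqrt{\tilde\nu}\in \SobH^1(\T)$ and the dissipation is finite, yet $\tilde\nu$ vanishes on half the torus. And the alternative --- derive $\mu=\mathrm{const}$ on $\mathrm{supp}\,\tilde\nu$, obtain~\eqref{eq:critical point} there, and then conclude positivity \emph{from}~\eqref{eq:critical point} --- is circular: you only get the representation on $\mathrm{supp}\,\tilde\nu$, which might be a proper closed subset, and you cannot then invoke the global form of~\eqref{eq:critical point} to upgrade the support. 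The clean repair is the change of variables $g:=\sqrt{\tilde\nu}\,e^{\beta(V+W\ast\tilde\nu)/2}$. Since $V$ and $W\ast\tilde\nu$ are smooth with bounded derivatives, $D(\tilde\nu)<\infty$ is equivalent to $g\in\SobH^1(\T^d)$, and one has the identity
\begin{equation}
D(\tilde\nu)=4\int_{\T^d}\abs*{\nabla g}^2\,e^{-\beta(V+W\ast\tilde\nu)}\dx{x},
\end{equation}
in which the exponential weight is bounded below by a positive constant on all of $\T^d$. Hence $D(\tilde\nu)=0$ forces $\nabla g=0$ a.e.\ on the whole torus, so $g$ is a.e.\ equal to a constant, which is strictly positive because $\intt{g^2 e^{-\beta(V+W\ast\tilde\nu)}}=1$. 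This yields~\eqref{eq:critical point} with $Z$ as in~\eqref{partitionfunction} and strict positivity simultaneously, with no prior appeal to the support. With this patch the argument closes, and the remaining steps $(4)\Rightarrow(1)$, $(1)\Rightarrow(3)$, and $(2)\Leftrightarrow(3)$ are fine as you describe them, modulo routine regularity justifications for the integration by parts in $(1)\Rightarrow(3)$.
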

A proof of this result can be found, for example, in~\cite[Proposition 2.4]{CGPS19} or in~\cite{Tam84}. It is evident from this characterisation that the behaviour of the system~\eqref{periodiceps=1} on the torus will affect the distinguished limits (either $N \to \infty$ or $\eps \to 0$) of the system~\eqref{nonlineareq} on $\R^d$. In particular, if~\eqref{periodiceps=1} has multiple steady states then the distinguished limits will be influenced by steady states attained in the long-time dynamics. We refer to the phenomenon of nonuniqueness of steady states as a phase transition and discuss its effect on the limits in Section~\ref{S:explicit}. 

To conclude this subsection, for the reader's convenience, we include Figure~\ref{fig:notation} which provides a useful schematic of the notation that will be used for the rest of this paper. Starting with $\rho^{\eps,N}$ the solution of~\eqref{eq:LinearKolmogorov}, one can obtain $\tilde{\nu}^N$, the solution of~\eqref{realeps=1}, by using the scaling in~\eqref{unscaling}. One can then pass to to the limit $N \to \infty $ in $\rho^{\eps,N}$ and $\nu^N$, to obtain the McKean--Vlasov equation at scale $\eps$~\eqref{nonlineareq} or scale $1$~\eqref{eps=1}, respectively. Alternatively one can consider the periodic rearrangement $\tilde{\nu}^N$ of $\nu^N$ which solves~\eqref{periodicNeps=1} and pass to the limit $N \to \infty$ to obtain a solution of the periodic McKean--Vlasov equation~\eqref{periodiceps=1}. The rest of the figure follows in a similar fashion. 

\begin{figure}
\begin{center}
$$
\begin{tikzcd}
\node (X1) at (0,0) {\rho^{\eps,N}};
\node (X2) at (4,0) {\nu^{N}};
\draw[>=triangle 45, bend left,->] (X1) to node [above]{\textcolor{blue}{\eps^{N d} \rho^{\eps,N}(\eps^2 s,\eps y)}} (X2);
\draw[>=triangle 45, bend left, ->] (X2) to node [below]{\textcolor{blue}{\eps^{-N d} \nu^{N}(\eps^{-2}t,\eps^{-1} x)}} (X1);
\node (X3) at (0,-4) {\rho^{\eps,\otimes N}};
\draw[>=triangle 45,  right, ->] (X1) to node [left]{\textcolor{blue}{N \to \infty}} (X3);
\node (X4) at (4,-4) {\nu^{\otimes N}};
\draw[>=triangle 45, left, ->] (X2) to node [right]{\textcolor{blue}{N \to \infty}} (X4);
\draw[>=triangle 45,  bend left, ->] (X3) to node [above]{\textcolor{blue}{\eps^{d} \rho^{\eps}(\eps^2 s,\eps y)}} (X4);
\draw[>=triangle 45,  bend left, ->] (X4) to node [below]{\textcolor{blue}{\eps^{-d} \tilde{\nu}(\eps^{-2} t,\eps^{-1} x)}} (X3);
\node (X5) at (7,0) {\tilde{\nu}^{ N}};
\draw[>=triangle 45, bend left, ->] (X2) to node [above]{\textrm{\textcolor{blue}{P.R.}}} (X5);
\node (X6) at (7,-4) {\tilde{\nu}^{\otimes N}};
\draw[>=triangle 45, left, ->] (X5) to node [right]{\textcolor{blue}{N \to \infty}} (X6);
\draw[>=triangle 45, bend right, ->] (X4) to node [below]{\textrm{\textcolor{blue}{P.R.}}} (X6);
\end{tikzcd}
$$
\end{center}
\caption{A schematic of the notation. The P.R. denotes periodic rearrangement in the sense of Definition~\ref{rearrangement}.}
\label{fig:notation}
\end{figure}
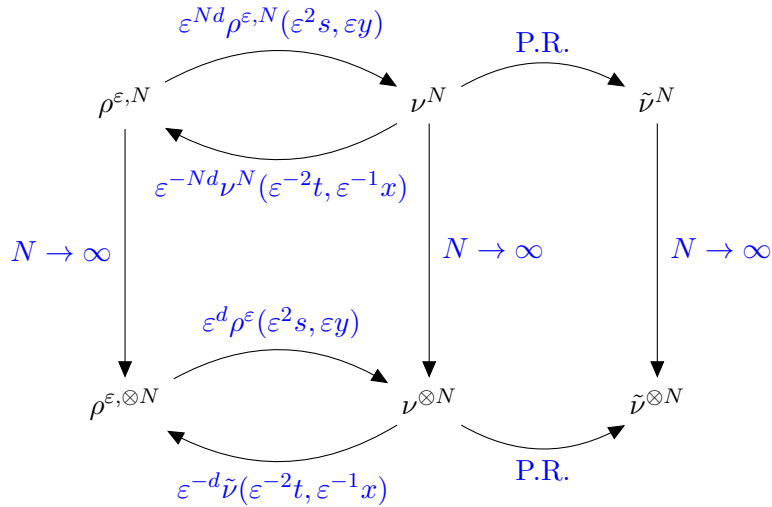
\subsection{The diffusive limit}\label{sec:diffusivelimit}

We have already discussed the limit $N \to \infty$ in Section~\ref{gradientflow}. Here, we discuss the diffusive limit, i.e. $\eps \to 0$. For a fixed number of particles $N$, we can use techniques from the theory of periodic homogenisation to pass to the limit $\eps\to0$ in \eqref{eq:LinearKolmogorov}, see for instance \cite[Chapter 20]{pavliotis2008multiscale}\cite{KV86,DFGW89,BLP11}. In particular, we have the following result:
\begin{customthm}{B}[The diffusive limit]\label{diffusivelimit}
Consider $\rho^{\eps,N}$ the solution to~\eqref{eq:LinearKolmogorov} with initial data $\rho_0^{\eps,N} \in \cP_{\sym}((\R^d)^N)$. Then, for all $t>0$ the limit
$$
\rho^{N,*}(t)=\lim_{\eps\to 0}\rho^{\eps,N}(t)
$$
exists. Furthermore, the curve of measures $\rho^{N,*}:[0,\infty) \to \cP_{\sym}((\R^d)^N)$ satisfies the heat equation
\begin{equation}\label{effectiveeq}
    \partial_t\rho^{N,*}=\nabla\cdot(A^{\eff,N} \nabla \rho^{N,*}) \, ,
\end{equation}
with initial data $\rho^{N,*}(0)=\lim_{\eps \to 0} \rho_0^{\eps,N}$ and where the covariance matrix is given by the  formula
\begin{align}
A^{\eff,N}=\beta^{-1}\int_{(\mathbb{T}^d)^N} (I+\nabla \Psi^N(y))\;M_N(y)\;\dx{y} \, ,
\label{kipnisvaradhan}
\end{align}
with 
$$
M_N(x)=\frac{e^{-H_1^N(x)}}{\int_{\bra*{\mathbb{T}^d}^N}e^{-H_1^N(y)}\;\dx{y}} \, ,
$$
the Gibbs measure of the quotiented $N$-particle system~\eqref{periodic interacting SDEs} and $\Psi^N:\big(\T^d\big)^N\to \big(\R^d\big)^N$ the unique mean zero solution to the associated corrector problem
$$
\nabla\cdot(M_N\nabla \Psi^N)=-\nabla M_N \label{Poisson} \, .
$$
Here, $H^N_1$ is the Hamiltonian of the associated particle system and is as defined in~\eqref{eq:H}.
\end{customthm}

\subsection{The limit $N \to \infty$ followed by $\eps \to 0$}\label{Nfirst}
We have discussed the mean field limit $N \to \infty$ in Section~\ref{gradientflow}. Now, we are ready to state our first result that characterises the limit $\lim_{\eps\to 0}\lim_{N\to \infty}\rho^{\eps,N}$:
\begin{theorem}\label{thm:variabledata}
Consider the set of initial data given by $\{\rho_0^\eps\}_{\eps>0}\subset\mathcal{P}(\R^d)$, and consider the periodic rearrangement at scale $\eps>0$ , i.e.
$$
\tilde{\nu}_0^\eps(A)=\eps^d \sum_{k\in \Z^d} \rho_0^\eps(\eps(A+ k))\qquad\mbox{ for $\eps>0$ }.
$$
Assume that there exists $C>0$, $p>1$ and a steady state $\tilde{\nu}^*\in\mathcal{P}(\T^d)$ such that $\tilde{\nu}^\eps(t)$, the solution to the $\eps=1$ periodic nonlinear evolution \eqref{periodiceps=1} with initial data $\tilde{\nu}_0^\eps(x)$, satisfies
\begin{equation}\label{exponential convergence}
\sup_{\eps>0}d_2^2(\tilde{\nu}^\eps (t),\tilde{\nu}^*) \le C t^{-p}\, . \tag{A1}
\end{equation} 
Then,
\begin{equation}\label{weak convergence}
\lim_{\eps\to 0}d_2^2(S_t^\eps\rho^\eps_0,S_t^* \rho^*_0)=0,    
\end{equation}
where $S^\eps_t$ is the solution semigroup associated to \eqref{nonlineareq}, $\rho^*_0\in\mathcal{P}(\R^d)$ is the weak-$*$ limit of $\rho_0^\eps$, and $S_t^*$ is the solution semigroup of the heat equation
\begin{equation}\label{limit eps to 0}
    \partial_t \rho=\nabla\cdot(A_*^{\eff}\nabla \rho), 
\end{equation}
where the covariance matrix
\begin{equation}\label{covariance matrix}
    A_*^{\eff} = \beta^{-1} \int_{\T^d} (I+ \nabla \Psi^*(y))\;\dx{\tilde{\nu}}^*(y),
\end{equation}
with $\Psi^*:\T^d\to \R^d$, $\Psi^*_i \in \SobH^1(\T^d)$ for $i=1, \dots,d$, is the unique mean zero solution to the associated corrector problem
\begin{equation}\label{limiting corrector}
    \nabla \cdot(\tilde{\nu}^* \nabla \Psi^*)=-\nabla \tilde{\nu}^*.
\end{equation}
Furthermore, assume that $X(t)^\eps$ is as defined in~\eqref{mfl} and that $\lim_{N \to \infty}\rho_0^{\eps,N}= X_0^\eps=\delta_{\rho_0^\eps}$. Then it holds that:
\begin{align}
\lim_{\eps \to 0} \lim_{N \to \infty} \rho^{\eps,N}= \lim_{\eps \to 0} X(t)^\eps= S_t^*\# X_0 \, ,
\end{align}
where $X_0=\delta_{\rho_0^*}$. 
\end{theorem}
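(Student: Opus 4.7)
The plan is to reduce Theorem~\ref{thm:variabledata} to the classical periodic homogenization of a linear Fokker--Planck equation, with the linearization step made viable by a quantitative input coming from~\eqref{exponential convergence}. The key observation is that, by the parabolic rescaling of Section~\ref{sandq}, the periodic rearrangement $\tilde\rho^\eps(t)$ at scale $\eps$ of $\rho^\eps(t)$ satisfies $\tilde\rho^\eps(t)=\tilde\nu^\eps(t/\eps^2)$, so that~\eqref{exponential convergence} yields
$$
d_2\bra*{\tilde\rho^\eps(t),\tilde\nu^*}\le C\,\eps^{p}\,t^{-p/2},\qquad t>0,\ \eps>0.
$$
Since $p>1$, multiplying this rate by the $\eps^{-1}$ prefactor present in the drift of~\eqref{nonlineareq} produces an error of order $\eps^{p-1}$ that vanishes as $\eps\to 0$; this is precisely what makes the subsequent linearization go through.

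The first concrete step is to rewrite~\eqref{nonlineareq} in a form that isolates the dependence on the fast rearranged marginal. Periodicity of $W$ yields the identity $\nabla W_\eps\ast\rho^\eps(x)=\eps^{-1}(\nabla W\ast\tilde\rho^\eps)(x/\eps)$, so
$$
\partial_t\rho^\eps=\beta^{-1}\Delta\rho^\eps+\eps^{-1}\nabla\cdot\bra*{\rho^\eps\,(\nabla U_{\tilde\rho^\eps(t)})(x/\eps)},\qquad U_\mu:=V+W\ast\mu\ \text{on}\ \T^d.
$$
I would then introduce the linear ``frozen'' comparison $\sigma^\eps$, solving the same equation with $\tilde\rho^\eps(t)$ replaced throughout by $\tilde\nu^*$, starting from the same datum $\rho_0^\eps$. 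By the self-consistency equation~\eqref{eq:critical point}, $\tilde\nu^*\propto e^{-\beta U_{\tilde\nu^*}}$ is exactly the invariant measure of the quotiented process associated with $\sigma^\eps$, so $\sigma^\eps$ falls squarely into the scope of classical periodic homogenization; applying Theorem~\ref{diffusivelimit} specialised to $N=1$, or equivalently~\cite[Chapter 20]{pavliotis2008multiscale} and~\cite{BLP11}, one obtains $\sigma^\eps(t)\to S_t^*\rho_0^*$ in $d_2$ with effective covariance~\eqref{covariance matrix} and corrector~\eqref{limiting corrector} exactly as stated.

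The main obstacle is the second step: showing that $d_2(\rho^\eps(t),\sigma^\eps(t))\to 0$. A naive synchronous SDE coupling of the two McKean processes is doomed because the $\eps^{-1}$-drift generates a Gr\"onwall factor of size $e^{Ct/\eps^{2}}$; the comparison must instead be carried out at the Fokker--Planck level. The difference $\delta^\eps=\rho^\eps-\sigma^\eps$ satisfies a linear advection--diffusion equation with divergence-form forcing
$$
\eps^{-1}\nabla\cdot\bra*{\rho^\eps\,\bra*{\nabla W\ast(\tilde\rho^\eps(t)-\tilde\nu^*)}(x/\eps)},
$$
whose amplitude, by Kantorovich duality, is bounded by $\eps^{-1}\norm{\nabla^2 W}_\infty d_1(\tilde\rho^\eps(t),\tilde\nu^*)\le C\eps^{p-1}t^{-p/2}$. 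One then tests $\delta^\eps$ against a two-scale test function of the form $\varphi(x)+\eps\,\Psi^*(x/\eps)\cdot\nabla\varphi(x)$ built from the corrector in~\eqref{limiting corrector}: the fast oscillations in the principal part of the linear evolution cancel to leading order, so that a Gr\"onwall estimate in a suitable negative-order norm closes the argument for $t$ bounded away from $0$. The initial layer in which the singularity $t^{-p/2}$ is not integrable has to be dealt with separately, and this is where the couplings of Appendix~\ref{ap:coupling} enter: they provide a direct $d_2$-control of $\rho^\eps-\sigma^\eps$ on a short interval $[0,\delta]$ that is uniform in $\eps$ and tends to zero with $\delta$. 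Combined with the conclusion of the previous step for $\sigma^\eps$, this gives~\eqref{weak convergence}.

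The mean field statement is then essentially immediate. Under the chaotic assumption $X_0^\eps=\delta_{\rho_0^\eps}$, well-posedness of~\eqref{nonlineareq} together with Theorem~\ref{thma} forces $X^\eps(t)=\delta_{S_t^\eps\rho_0^\eps}$, and the isometric embedding $\mu\mapsto\delta_\mu$, which satisfies $\mathfrak{D}_2(\delta_\mu,\delta_\nu)=d_2(\mu,\nu)$, lifts the $d_2$-convergence $S_t^\eps\rho_0^\eps\to S_t^*\rho_0^*$ established in the first part to the $\mathfrak{D}_2$-convergence $X^\eps(t)\to\delta_{S_t^*\rho_0^*}=S_t^*\# X_0$.
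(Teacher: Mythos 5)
Your proposal takes a genuinely different route from the paper. You work at the Fokker--Planck level: linearize around the frozen-coefficient equation with invariant density $\tilde\nu^*$, homogenize the frozen linear problem by classical two-scale methods, and then try to control $\rho^\eps-\sigma^\eps$ via a dual Gr\"onwall estimate. The paper instead works at the level of the McKean SDE \eqref{eq:mckeanSDE}: it solves a \emph{time-dependent} corrector problem (Lemma~\ref{lem:exreg}), applies It\^o's formula to $\chi(Y_t^\eps,t)$, and passes to the limit via the martingale central limit theorem, with the uniformity in $\eps$ encoded in Corollary~\ref{uniform eps}. Both approaches must confront the same difficulty — the $\eps^{-1}$ drift prevents a naive Gr\"onwall coupling — and the time-dependent corrector in the paper plays essentially the role your two-scale test function is meant to play.

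There are, however, two concrete gaps in your argument. First, the core estimate is not demonstrated: you assert that testing $\delta^\eps=\rho^\eps-\sigma^\eps$ against $\varphi(x)+\eps\,\Psi^*(x/\eps)\cdot\nabla\varphi(x)$ makes "the fast oscillations cancel to leading order" so that a Gr\"onwall estimate in some unspecified negative-order norm closes. The equation for $\delta^\eps$ retains the $\eps^{-1}\nabla\cdot(\delta^\eps\,\nabla U_{\tilde\nu^*}(\cdot/\eps))$ term in its homogeneous part, and the cancellation only kills the leading order; identifying the correct norm, checking that the remainder is $O(1)$ (uniformly in $\eps$), and integrating the singular forcing $\eps^{p-1}t^{-p/2}$ in time all require real work that your sketch omits and that is not merely a routine verification. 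Second, your handling of the initial layer mischaracterizes what Appendix~\ref{ap:coupling} provides. Lemma~\ref{coupling existence} couples the time-dependent torus process $\dot Y_t^\eps$ to the \emph{stationary} frozen process $\dot X_t$ started from $\tilde\nu^*$, and yields $\sup_{\eps}\mathbb{E}[d_f(\dot X_t,\dot Y_t)^2]\to 0$ as $t\to\infty$; it is a long-time statement. It does not give a short-time control of $\rho^\eps-\sigma^\eps$ on $[0,\delta]$ that "tends to zero with $\delta$", especially not with your choice of $\sigma^\eps$ started from $\rho_0^\eps$ rather than from the stationary measure. In the paper the same coupling is used inside the martingale term, where division by $t$ and time-averaging make the long-time bound exactly what is needed — a mechanism your proposal does not reproduce. (A minor point: Theorem~\ref{diffusivelimit} "specialised to $N=1$" would drop the $W\ast\tilde\nu^*$ drift; the frozen problem should just be cited as classical linear periodic homogenization.)
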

In particular, we can apply this theorem to obtain the following result.
\begin{corollary}\label{cor:beforephasetransition}
Assume that the periodic mean field energy \eqref{periodicmeanfieldenergy} admits a unique minimiser (and hence critical point) $\tilde{\nu}^{\min}$ and that it is an exponential attractor for arbitrary initial data of the evolution of \eqref{periodiceps=1}, i.e. $d_2(\tilde{\nu}(t),\tilde{\nu}^{\min}) \leq d_2(\tilde{\nu}_0^\eps,\tilde{\nu}_{\min})e^{-Ct}$ for some fixed constant $C>0$. Then, the conclusions of Theorem~\ref{thm:variabledata} are valid for arbitrary initial data.
\end{corollary}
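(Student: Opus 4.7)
The plan is to deduce the corollary directly from Theorem~\ref{thm:variabledata}. The only work to do is to verify that hypothesis \eqref{exponential convergence} is automatically satisfied, with a constant $C$ and exponent $p$ independent of the initial data, once the periodic mean field energy $\tilde{E}_{MF}$ has $\tilde{\nu}^{\min}$ as its unique exponential attractor. Once that uniform decay is in place, we simply take $\tilde{\nu}^* = \tilde{\nu}^{\min}$ in Theorem~\ref{thm:variabledata}, and the characterisation of $\lim_{\eps \to 0} \lim_{N \to \infty} \rho^{\eps,N}$ follows verbatim.

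The first step is to observe that, because the torus $\T^d$ has finite diameter in the quotient metric, the 2-Wasserstein distance between any two elements of $\cP(\T^d)$ is bounded by a universal constant $D = D(d) < \infty$. In particular,
\begin{equation*}
\sup_{\eps > 0} d_2(\tilde{\nu}_0^\eps, \tilde{\nu}^{\min}) \le D,
\end{equation*}
irrespective of the choice of initial data $\rho_0^\eps \in \cP(\R^d)$ and hence of its periodic rearrangement $\tilde{\nu}_0^\eps \in \cP(\T^d)$. Combined with the exponential contraction hypothesis $d_2(\tilde{\nu}^\eps(t), \tilde{\nu}^{\min}) \le d_2(\tilde{\nu}_0^\eps, \tilde{\nu}^{\min}) e^{-Ct}$, this yields the uniform estimate
\begin{equation*}
\sup_{\eps > 0} d_2(\tilde{\nu}^\eps(t), \tilde{\nu}^{\min}) \le D\, e^{-Ct}, \qquad t \ge 0.
\end{equation*}

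The second step is to pass from exponential to polynomial decay. For any $p > 1$, the function $t \mapsto t^p e^{-2Ct}$ is continuous on $(0, \infty)$, vanishes at both endpoints, and is therefore bounded by some constant $K_p = K_p(C,p) < \infty$. Consequently
\begin{equation*}
\sup_{\eps > 0} d_2^2(\tilde{\nu}^\eps(t), \tilde{\nu}^{\min}) \le D^2 e^{-2Ct} \le D^2 K_p\, t^{-p}, \qquad t > 0,
\end{equation*}
which is precisely hypothesis \eqref{exponential convergence} with $\tilde{\nu}^* = \tilde{\nu}^{\min}$. We may now apply Theorem~\ref{thm:variabledata} to conclude \eqref{weak convergence} together with the identification $\lim_{\eps \to 0} \lim_{N \to \infty} \rho^{\eps,N} = S_t^* \# X_0$, where the effective matrix $A_*^{\eff}$ is determined through the corrector problem \eqref{limiting corrector} posed against the unique minimiser $\tilde{\nu}^{\min}$.

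There is no real obstacle here: the only thing that could go wrong would be a loss of uniformity in $\eps$, but that is precisely what the boundedness of the torus rules out. The conceptual content of the corollary is entirely contained in the hypothesis of a unique, exponentially attracting steady state of \eqref{periodiceps=1}; verifying that such a regime exists (for instance at high temperature or under convexity-type conditions on $\tilde{E}_{MF}$) is the nontrivial matter, but this is orthogonal to the present statement and will be addressed in Section~\ref{S:explicit}.
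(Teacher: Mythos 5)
Your proposal is correct and takes essentially the same route as the paper, which simply observes in one line that the exponential-attractor hypothesis implies assumption~\eqref{exponential convergence}. You have filled in the two details the paper leaves implicit — uniformity in $\eps$ via the bounded diameter of $\T^d$, and the elementary domination $e^{-2Ct} \le K_p t^{-p}$ — both of which are exactly right.
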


\begin{proof}
The proof of this follows from the fact that $d_2(\tilde{\nu}(t),\tilde{\nu}^{\min}) \leq d_2(\tilde{\nu}_0^\eps,\tilde{\nu}^{\min})e^{-Ct}$ implies that assumption~\eqref{exponential convergence} holds. 
\end{proof}

\begin{remark}[Non-chaotic initial data]
Although Theorem~\ref{thm:variabledata} requires that the initial data be chaotic, we can deal with non-chaotic initial data by tweaking assumption~\eqref{exponential convergence} to read as follows:
\begin{align}
\sup_{\eps>0} \sup\limits_{\rho \in \mathrm{supp} \; X_0^\eps} d_2(\tilde{\nu}_\rho(t), \tilde{\nu}^*) \leq Ct^{-p} \, ,
\end{align}
for $p>1$ and $C>0$ and $\tilde{\nu}_\rho(t)$ the solution of~\eqref{periodiceps=1} starting with initial data $\tilde{\nu_0}$
which is the periodic rearrangement of $\rho$.
\end{remark}
\begin{remark}
We cannot expect convergence $S_t^\eps \rho_0^\eps$ to $S_t^*\rho_0^*$ in a strong sense. By performing a formal multiscale expansion, we expect that
$$
S^\eps_t\rho^\eps_0(x)=S^*_t\rho^*_0(x) \tilde{\nu}^*(x/\eps)+\eps S^*_t(\nabla\rho^*_0)(x)\cdot\Phi(x/\eps) + \mathcal{O}(\eps^2).
$$
In particular, whenever  $\tilde{\nu}^*$ is not trivial, the leading term  $S^*_t\rho^*_0(x) \tilde{\nu}^*(x/\eps)\rightharpoonup S^*_t\rho^*_0 $ converges only weakly to its limit. 
\end{remark}

\begin{remark}\label{rem:nondegenerate}
The effective covariance matrix $A^{\eff}_*$ is strictly positive definite and we have the following bound on the ellipticity of the effective covariance matrix
$$
    \frac{\beta^{-1}}{Z_* Z^-_*}I\le A_*^{\eff}\le \beta^{-1} I,
$$
where
$$
Z_*=\int_{\T^d}e^{-\beta (V+W\ast\tilde{\nu}^*(y))}\;\dx{y}\qquad\mbox{and}\qquad Z^-_*=\int_{\T^d}e^{\beta (V+W\ast\tilde{\nu}^*(y))}\;\dx{y},
$$
see \cite[Theorem 13.12]{pavliotis2008multiscale}.
\end{remark}
\begin{remark}
If we consider rapidly varying initial data, that is to say, if there exists $\rho_{in}\in\mathcal{P}(\R^d)$ such that
$$
\rho_0^\eps(x)=\eps^{-d}\rho_{in}(\eps^{-1}x).
$$
Then, the hypothesis of Theorem~\ref{thm:variabledata} reduces to checking the speed of convergence to $\nu^*$ of the solution to \eqref{periodiceps=1} with the periodic rearrangement of $\rho_{in}$ as initial data, and $\rho^*_0=\delta_0$.

Here we can easily see how the phase transition matters for the limiting behaviour. If the evolution \eqref{periodiceps=1} admits more that one steady state $\tilde{\nu}^{*}_1$ and $\tilde{\nu}^{*}_2$, then the diffusive limit will be different if we consider $\rho_{in}=\tilde{\nu}^{*}_1$ or $\rho_{in}=\tilde{\nu}^{*}_2$, see Corollary~\ref{cor:example2} for an explicit example.
\end{remark}

\vspace{0.2cm}

\subsection{The limit $\eps \to 0$ followed by $N \to \infty$}\label{epsthenN}
 Now that we have discussed the diffusive limit in Section~\ref{sec:diffusivelimit}, we characterise the limit $N\to\infty$ of $\rho^{N,*}(t)$:
\begin{theorem}\label{thm: N then eps}
Assume that the periodic mean field energy $\tilde{E}_{MF}$ \eqref{periodicmeanfieldenergy} admits a unique minimiser $\tilde{\nu}^{\min}$, then we have that $\rho^{N,*}$ the solution of~\eqref{effectiveeq} satisfies, for any fixed $t>0$,
$$
\lim_{N\to\infty}\rho^{N,*}(t)=X(t)=S_t^{\min}\#X_0,
$$
where $X_0 \in \cP(\cP(\R^d))$ is the limit of $\rho^{N,*}(0)$ in the sense of Definition~\eqref{def:1}, and $S_t^{\min}:\mathcal{P}(\R^d)\to\mathcal{P}(\R^d)$ is the solution semigroup of the heat equation
\begin{equation}\label{limit eps to 0 bis}
    \partial_t \rho=\nabla\cdot(A_{\min}^{\eff}\nabla \rho), 
\end{equation}
where the covariance matrix
\begin{equation}\label{covariance matrix bis}
    A_{\min}^{\eff} = \beta^{-1}\int_{\T^d} (I+ \nabla \Psi^{\min}(y))\;\dx{\tilde{\nu}^{\min}}(y) \, ,
\end{equation}
with $\Psi^{\min}:\T^d\to \R^d$, $\Psi^{\min}_i \in \SobH^1\bra{\T^d}$ for $i=1, \dots, d$, the unique mean zero solution to the associated corrector problem
\begin{equation}\label{min corrector}
    \nabla \cdot(\tilde{\nu}^{\min} \nabla \Psi^{\min})=-\nabla \tilde{\nu}^{\min} \, .
\end{equation}
It follows then, that for any fixed $t>0$, the solution $\rho^{\eps,N}(t)$ of~\eqref{eq:LinearKolmogorov} satisfies
\begin{align}
\lim_{N \to \infty} \lim_{\eps \to 0} \rho^{\eps,N}(t)= \lim_{N\to \infty}\rho^{N,*}(t)=S_t^{\min}\#X_0 \, .
\end{align}
\end{theorem}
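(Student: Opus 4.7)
The plan is to pass to the limit $N \to \infty$ in the linear heat equation~\eqref{effectiveeq} by first analysing the asymptotics of the effective covariance matrix $A^{\eff,N}$, and then exploiting the fact that, since $A^{\eff,N}$ is spatially constant, the $n$-particle marginal $\rho^{N,*}_n(t)$ of the symmetric solution itself satisfies a closed heat equation whose diffusion matrix is the $nd\times nd$ top-left block of $A^{\eff,N}$. The key claim is that $A^{\eff,N}$ becomes asymptotically block-diagonal with blocks $A^{\eff}_{\min}$ as $N\to\infty$, so that the limiting dynamics at the level of $n$-particle marginals are those of $n$ independent copies of the single-particle heat equation driven by $A^{\eff}_{\min}$.

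First, identify the limit of $M_N$. By Lemma~\ref{lem:gammaperiodic}, $\tilde{E}^N$ $\Gamma$-converges to $\tilde{E}^\infty$, and $M_N$ is the unique minimiser of $\tilde{E}^N$, so every accumulation point of $\{M_N\}$ in the sense of Definition~\ref{def:1} is a minimiser of $\tilde{E}^\infty$. The uniqueness of the minimiser $\tilde{\nu}^{\min}$ of $\tilde{E}_{MF}$ forces $\delta_{\tilde{\nu}^{\min}}$ to be the unique minimiser of $\tilde{E}^\infty$, whence $M_N \to \delta_{\tilde{\nu}^{\min}}$; in particular every $k$-marginal $M_N^k$ converges weak-$*$ to the product $(\tilde{\nu}^{\min})^{\otimes k}$.

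Next, show that $A^{\eff,N}$ is asymptotically block-diagonal. By exchangeability of $M_N$, the matrix $A^{\eff,N}$ has the two-parameter block form $A^{\eff,N}_{(i,a),(j,b)} = \delta_{ij}(A^N_D)_{ab} + (1-\delta_{ij})(A^N_O)_{ab}$ for $d\times d$ matrices $A^N_D,A^N_O$. Using the variational characterisation
$$
\xi\cdot A^{\eff,N}\xi = \beta^{-1}\inf_{\phi}\int_{(\T^d)^N}\abs*{\xi + \nabla\phi}^2 \dx{M_N},
$$
the choice $\xi = e_{1,a}$ with the product test function $\phi(x) = \Psi^{\min}_a(x_1)$ combined with $M_N^1 \to \tilde{\nu}^{\min}$ yields $\limsup_N (A^N_D)_{aa}\le (A^{\eff}_{\min})_{aa}$. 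The matching lower bound uses a disintegration argument: given a near-minimising sequence $\phi^N$, exchangeability of $M_N$ and weak convergence of the marginals reduce it to an admissible competitor in the single-particle variational problem for $A^{\eff}_{\min}$, after which lower semicontinuity of the Dirichlet functional along the convergent marginals closes the estimate. A polarisation argument with $\xi = (\xi_1,\xi_2,0,\ldots,0)$, together with convergence of the two-point marginal to $\tilde{\nu}^{\min}\otimes\tilde{\nu}^{\min}$, shows $A^N_O \to 0$.

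Finally, pass to the limit in the heat equation. Integrating~\eqref{effectiveeq} over $x_{n+1},\ldots,x_N$ and using that $A^{\eff,N}$ is constant, the $n$-particle marginal solves $\partial_t \rho^{N,*}_n = \nabla\cdot(A^{\eff,N,n}\nabla\rho^{N,*}_n)$, where $A^{\eff,N,n}$ is the $nd\times nd$ upper-left block of $A^{\eff,N}$. From the previous step $A^{\eff,N,n}\to\diag(A^{\eff}_{\min},\ldots,A^{\eff}_{\min})$, and by assumption $\rho^{N,*}_n(0)\rightharpoonup^* X_0^n$. Continuity of the constant-coefficient heat semigroup in both its covariance and its initial datum then yields
$$
\rho^{N,*}_n(t)\rightharpoonup^* \int_{\cP(\R^d)}(S_t^{\min}\rho)^{\otimes n}\dx{X_0}(\rho) = (S_t^{\min}\#X_0)^n,
$$
and since this holds for every $n$, Definition~\ref{def:1} gives $\rho^{N,*}(t)\to S_t^{\min}\#X_0$; combining with Theorem~\ref{diffusivelimit} produces the final identity. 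The main obstacle is the identification of the limits of $A^N_D$ and $A^N_O$: the upper bound via product test functions is routine, but showing that the optimal $N$-particle corrector cannot outperform independent single-particle correctors despite the coupling induced by $H^N_1$ is the Dirichlet-form analogue of the de Finetti convergence of $M_N$, and requires combining exchangeability of $M_N$ with lower semicontinuity of Dirichlet forms along weakly convergent probability measures.
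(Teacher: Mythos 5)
Your high-level skeleton matches the paper's: (i) show $A^{\eff,N}$ becomes asymptotically block-diagonal with constant diagonal block $A^{\eff}_{\min}$ and vanishing off-diagonal block, then (ii) project the constant-coefficient heat equation onto $n$-particle marginals, pass to the limit entry-wise in the covariance, and read off $\rho^{N,*}_n(t)\rightharpoonup (S_t^{\min}\#X_0)^n$. Step (ii) is essentially identical to the paper's Step 4. The genuine difference is in how you prove (i). The paper passes to the limit in the corrector \emph{equation} $\nabla\cdot(M_N\nabla\Psi^N)=-\nabla M_N$ tested against cylindrical functions, after first establishing (a) the a priori bounds $\int|\nabla\Psi^N_i|^2 M_N\le 1$ and, crucially, $\int|\nabla_{x_k}\Psi^N_i|^2 M_N\le \tfrac{1}{N-1}$ for particles $k$ not owning coordinate $i$, and (b) the asymptotic factorisation $M_N/M_{N-1}\to(M_N)_1$ in $\Leb^2(M_{N-1})$, proved through the Lions lift of symmetric functions to $C(\T^d\times\cP(\T^d))$ combined with chaoticity. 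Bound (a) immediately kills the off-diagonal blocks ($|A^{\eff,N}_{ij}|\le (N-1)^{-1}$ by Cauchy--Schwarz) with no polarisation needed, and (b) lets them replace $M_N$ by $(M_N)_1 M_{N-1}$ inside the weak form, reducing the limit of the Poisson equation to the single-particle one. You instead use the Dirichlet-form variational characterisation of $A^{\eff,N}$.

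That variational route is legitimate and arguably cleaner to state, but as written it has a real gap exactly where you flag it. The $\limsup$ half via product test functions is fine. For the $\liminf$ half, ``exchangeability plus lower semicontinuity of Dirichlet forms'' is not the right lemma: the near-minimising $\phi^N$ lives on $(\T^d)^N$ and is not a product, so after discarding cross-particle gradients (conditional Jensen) one is left with $\int\bigl(\int|e_a+\nabla_{x_1}\phi^N|^2\,M_N^{1|2\cdots N}\dx{x_1}\bigr)\,M_N^{(2,\dots,N)}$, where the conditional measure $M_N^{1|2\cdots N}$ is the single-particle Gibbs measure tilted by $W*\hat\mu_{N-1}$, $\hat\mu_{N-1}$ the empirical measure of the remaining particles. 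What you actually need is (1) chaoticity of $M_N$ pushing $\hat\mu_{N-1}\to\tilde{\nu}^{\min}$ $M_N^{(2,\dots,N)}$-a.s.\ so that $M_N^{1|2\cdots N}\to\tilde{\nu}^{\min}$ uniformly, and (2) \emph{stability of the homogenised coefficient} $\mu\mapsto\inf_\psi\int|e_a+\nabla\psi|^2\dx{\mu}$ under uniform convergence of densities bounded above and below---a statement about the map, not lower semicontinuity of a fixed functional. Both ingredients are provable, but neither is supplied, and the one you name is not the one doing the work. Separately, the polarisation argument for $A^N_O\to 0$ logically depends on the full diagonal limit $A^N_D\to A^{\eff}_{\min}$ (including off-diagonal entries $a\ne b$ of the block, also needing polarisation) being established first; the paper's $1/(N-1)$ bound is unconditional and strictly simpler. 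So: valid alternative strategy, but the $\liminf$ step is a real hole, and the off-diagonal estimate is done the harder way.
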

\begin{remark}\label{rmk:chaoticity}
By $\Gamma$-convergence, the assumption that the periodic mean field energy $\tilde{E}_{MF}$ defined in~\eqref{periodicmeanfieldenergy} admits a unique minimiser implies chaoticity of the Gibbs measure, that is to say $M_N\to \delta_{\tilde{\nu}^{\min}}\in \mathcal{P}(\mathcal{P}(\T^d))$, see Lemma~\ref{lem:gammaperiodic}. We note that the assumption that $\tilde{E}_{MF}$ admits a unique minimiser can be replaced by the weaker chaoticity assumption on $M_N$, i.e. $M_N\to \delta_{\tilde{\nu}^{\min}_0}$ for some specific minimiser $\tilde{\nu}^{\min}_0$.
\end{remark}

\vspace{0.2cm}

\subsection{The effect of phase transitions}\label{S:explicit}
As mentioned in Section~\ref{sandq}, we expect the presence of phase transition to affect the commutativity of the limits , especially since the results of Theorems~\ref{thm:variabledata} and~\ref{thm: N then eps} depend on the steady states of~\eqref{periodiceps=1} and the minimisers of the periodic mean field energy $\tilde{E}_{MF}$. Before proceeding any further, we define what we mean by a phase transition:
\begin{definition}[Phase transition]\label{pt}
The periodic mean field system~\eqref{periodiceps=1} is said to undergo a phase transition at some $0<\beta_c<\infty$, if:
\begin{enumerate}
\item For all $\beta<\beta_c$, there exists a unique steady state of~\eqref{periodiceps=1}.
\item For $\beta>\beta_c$, there exist at least two steady states of~\eqref{periodiceps=1}.
\end{enumerate}
The temperature $\beta_c$ is referred to as the point of phase transition or the critical temperature.
\end{definition}
The above definition would not make sense without the following result:
\begin{proposition}[Uniqueness at high temperature]\label{uniqueness}
For all $0<\beta<\infty$, the periodic mean field system~\eqref{periodiceps=1} has at least one steady state, which is a minimiser of the periodic mean field energy $\tilde{E}_{MF}$. Furthermore, for $\beta$ small enough, there exists a unique steady state $\tilde{\nu}^{\min}$ of~\eqref{periodiceps=1}, which corresponds to 
the unique minimiser of $\tilde{E}_{MF}$.
\end{proposition}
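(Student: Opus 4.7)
The statement has two parts: existence of a minimiser of $\tilde{E}_{MF}$ (and hence a steady state) for every $\beta \in (0,\infty)$, and uniqueness of the steady state for $\beta$ small enough. My plan is to handle these in turn via the direct method of the calculus of variations and via Pinsker's inequality applied to the self-consistency equation~\eqref{eq:critical point}.

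For existence, I would apply the direct method on $\cP(\T^d)$. Since $\T^d$ is compact, $\cP(\T^d)$ equipped with the weak-$*$ topology is sequentially compact by Prokhorov. Because $V$ and $W$ are smooth, hence bounded and continuous on the torus, the potential term $\intt{V \, \tilde{\nu}}$ and the quadratic interaction term $\tfrac12 \intt{(W \ast \tilde{\nu})\, \tilde{\nu}}$ are continuous under weak-$*$ convergence; the entropy $\tilde{\nu} \mapsto \beta^{-1}\intt{\tilde{\nu}\log\tilde{\nu}}$ is weakly-$*$ lower semicontinuous, and by Jensen's inequality applied to the convex map $x \mapsto x\log x$ on the unit-volume space $\T^d$ it is bounded below by $0$. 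A minimising sequence therefore admits a weak-$*$ limit that minimises $\tilde{E}_{MF}$; by the equivalence (2)$\Leftrightarrow$(1) in Proposition~\ref{tfae}, this minimiser is a steady state of~\eqref{periodiceps=1}.

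For the uniqueness claim, suppose $\tilde{\nu}_1, \tilde{\nu}_2$ are two steady states. By~\eqref{eq:critical point} both are strictly positive (as $V$ and $W$ are bounded), and taking logarithms gives
\begin{equation*}
\log \tilde{\nu}_i = -\beta\bra*{V + W \ast \tilde{\nu}_i} - \log Z_i, \quad i = 1, 2.
\end{equation*}
Subtracting these relations and pairing with $\tilde{\nu}_1 - \tilde{\nu}_2$ kills the constants $\log Z_i$ (which integrate against a mean-zero function to $0$), yielding
\begin{equation*}
\intt{(\tilde{\nu}_1 - \tilde{\nu}_2)(\log \tilde{\nu}_1 - \log \tilde{\nu}_2)} = -\beta \intt{(\tilde{\nu}_1 - \tilde{\nu}_2)\, W \ast (\tilde{\nu}_1 - \tilde{\nu}_2)}.
\end{equation*}
The left-hand side is the symmetric relative entropy $H(\tilde{\nu}_1 \mid \tilde{\nu}_2) + H(\tilde{\nu}_2 \mid \tilde{\nu}_1)$, bounded below by a positive multiple of $\norm{\tilde{\nu}_1 - \tilde{\nu}_2}_{\Leb^1(\T^d)}^2$ via Pinsker's inequality; the right-hand side is bounded above in absolute value by $\beta \norm{W}_{\Leb^\infty(\T^d)} \norm{\tilde{\nu}_1 - \tilde{\nu}_2}_{\Leb^1(\T^d)}^2$ using Young's convolution inequality. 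For $\beta$ smaller than an explicit constant depending only on $\norm{W}_{\Leb^\infty}$, these bounds are incompatible unless $\tilde{\nu}_1 = \tilde{\nu}_2$. Since every minimiser is a steady state, uniqueness of the minimiser of $\tilde{E}_{MF}$ follows.

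The existence part is routine. The obstacle, and the reason uniqueness is asserted only at small $\beta$, is that the interaction term in $\tilde{E}_{MF}$ need not be convex in $\tilde{\nu}$, so convexity of the full energy is not available. The Pinsker/self-consistency strategy above exploits strict convexity of the entropy to dominate the non-convex interaction at high temperature, which is the cleanest route to a quantitative uniqueness threshold.
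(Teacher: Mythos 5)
Your proposal is correct. The existence part follows essentially the same line that the paper (via its references) takes: compactness of $\cP(\T^d)$ in the weak-$*$ topology, lower semicontinuity of the entropy, continuity of the potential terms, and then invoking Proposition~\ref{tfae} to upgrade minimiser to steady state. The only thing worth flagging is the passage ``minimiser $\Rightarrow$ zero metric slope $\Rightarrow$ steady state'': it is standard, but one should note that the minimiser has finite entropy (hence is an $\Leb^1$ density) so that the perturbation argument behind the zero-slope claim makes sense. That is implicit in your use of Jensen but deserves a sentence.

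For uniqueness your route is genuinely different from the one the paper cites. The paper points to~\cite[Prop.~2.8]{CGPS19} and~\cite[Thm.~3]{messer1982statistical}, both of which argue via Banach fixed point: the map $\tilde{\nu} \mapsto Z^{-1}e^{-\beta(V + W\ast\tilde{\nu})}$ on $\cP(\T^d)$ is shown to be a contraction (in $\Leb^1$ or $\Leb^2$) for $\beta$ small, which gives existence \emph{and} uniqueness of the fixed point simultaneously, together with a constructive Kirkwood--Monroe iteration. What you do instead is compare two putative solutions directly: subtract the two log self-consistency relations, pair against $\tilde{\nu}_1 - \tilde{\nu}_2$ (the normalisation constants drop out because the test function has mean zero), identify the left side as the symmetric relative entropy $\mathcal{H}(\tilde{\nu}_1|\tilde{\nu}_2) + \mathcal{H}(\tilde{\nu}_2|\tilde{\nu}_1)$, bound it below by $\norm{\tilde{\nu}_1 - \tilde{\nu}_2}_{\Leb^1}^2$ via Pinsker, and bound the interaction term above by $\beta\norm{W}_{\Leb^\infty}\norm{\tilde{\nu}_1-\tilde{\nu}_2}_{\Leb^1}^2$. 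Both approaches are ``standard,'' but yours avoids invoking the contraction mapping theorem at the cost of not being constructive; it also yields the explicit threshold $\beta < \norm{W}_{\Leb^\infty(\T^d)}^{-1}$ directly (which could be sharpened slightly by recentring $W$ to have zero mean, since adding a constant to $W$ leaves the steady states unchanged). Either argument is acceptable; the paper does not prove the proposition in-house, so there is nothing to reconcile beyond the comparison above.
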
 
The proof of this result follows from standard fixed point and compactness arguments and can be found in~\cite[Theorem 2.3 and Proposition 2.8]{CGPS19} or~\cite[Theorem 3]{messer1982statistical}.
\begin{remark}
The reader may have noticed that in Definition~\ref{pt} we do not discuss what happens at $\beta=\beta_c$. This is due to the fact that this depends on the nature of the phase transition, i.e. whether it is continuous or discontinuous. A detailed discussion of these phenomena and the conditions under which they arise can be found in~\cite{CP10,CGPS19}. 
\end{remark}

In the absence of a confining potential, i.e. for $V=0$, the existence and properties of phase transitions were studied in detail in~\cite{CP10,CGPS19}. It turns out that a key role in understanding this phenomenon is played by the notion of $H$-stability. We refer to an interaction potential $W$ as $H$-stable, denoted by $W \in \mathbf{H}_s$, if its Fourier coefficients are nonnegative, i.e.
\begin{align}
\hat{W}(k):= \intt{W(x) e^{i2 \pi k x}}\geq 0 \quad \forall k \in \Z^d, k \not \equiv 0 \, .
\end{align}
This notion of $H$-stability is closely related to a similar concept used in the statistical mechanics of lattice spin systems (cf.~\cite{Rue69}). Indeed, it provides us with a sharp criterion for the existence of a phase transition in the absence of the term $V$:
\begin{proposition}[Existence of phase transitions,~\cite{CGPS19,CP10}]\label{expt}
Assume $V=0$. Then the periodic mean field system~\eqref{periodiceps=1} undergoes a phase transition in the sense of Definition~\ref{pt} if and only if $W \notin \mathbf{H}_s$.
\end{proposition}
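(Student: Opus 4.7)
The plan rests on two observations. First, when $V = 0$, one has $W \ast 1 = \hat{W}(0)$ constant on $\T^d$, so the uniform measure $\tilde{\nu} \equiv 1$ satisfies the self-consistency equation~\eqref{eq:critical point} and is, by Proposition~\ref{tfae}, a steady state of~\eqref{periodiceps=1} for every $\beta > 0$. Second, Parseval's identity gives
$$
\frac{1}{2}\int_{\T^d} W \ast \tilde{\nu} \, d\tilde{\nu} = \frac{1}{2}\sum_{k \in \Z^d} \hat{W}(k) \, |\hat{\tilde{\nu}}(k)|^2,
$$
so the convexity properties of the interaction energy are entirely encoded in the signs of $\hat{W}(k)$. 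The proof splits cleanly along this Fourier dichotomy.

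For the \emph{only if} direction, assume $W \in \mathbf{H}_s$. The identity above shows that the interaction energy is convex in $\tilde{\nu}$, and combined with the strict convexity of the entropy on $\mathcal{P}(\T^d)$ this makes $\tilde{E}_{MF}$ strictly convex for every $\beta > 0$. Therefore $\tilde{E}_{MF}$ has a unique minimiser (which is forced to be $\tilde{\nu} \equiv 1$), and by Proposition~\ref{tfae} this is the unique steady state of~\eqref{periodiceps=1} for every $\beta > 0$, ruling out any phase transition.

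For the \emph{if} direction, suppose $W \notin \mathbf{H}_s$. Let $k_\ast \in \Z^d \setminus \{0\}$ attain $\hat{W}(k_\ast) = \min_{k \neq 0} \hat{W}(k) < 0$ (the minimum is realised, since $\hat{W}(k) \to 0$ by Riemann--Lebesgue), and set $\beta_c := -1/\hat{W}(k_\ast) > 0$. A direct computation of the second variation of $\tilde{E}_{MF}$ at $\tilde{\nu} \equiv 1$ along a mean-zero perturbation $\phi$ gives
$$
\delta^2 \tilde{E}_{MF}[1](\phi,\phi) = \sum_{k \neq 0} \bigl(\beta^{-1} + \hat{W}(k)\bigr)|\hat{\phi}(k)|^2.
$$
For $\beta > \beta_c$, the choice $\phi(x) = \cos(2\pi k_\ast \cdot x)$ makes this strictly negative, so $\tilde{\nu} \equiv 1$ is not a local minimiser of $\tilde{E}_{MF}$. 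Since Proposition~\ref{uniqueness} guarantees the existence of some minimiser, that minimiser must be distinct from $\tilde{\nu} \equiv 1$, producing via Proposition~\ref{tfae} a second steady state of~\eqref{periodiceps=1} for every $\beta > \beta_c$.

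The main obstacle is establishing uniqueness of steady states for every $\beta < \beta_c$. The same second-variation computation shows that, for $\beta < \beta_c$, the Hessian at $\tilde{\nu} \equiv 1$ is strictly positive definite on mean-zero perturbations, so $\tilde{\nu} \equiv 1$ is a strict local minimiser; however, this local information does not a priori exclude the existence of further critical points away from $\tilde{\nu} \equiv 1$. The strategy I would follow is that of~\cite{CGPS19,CP10}: pair the sharp spectral bound $\beta < \beta_c$ with a quantitative logarithmic Sobolev or Poincaré-type inequality for the fixed-point map $T_\beta(\tilde{\nu}) := Z^{-1} e^{-\beta W \ast \tilde{\nu}}$, controlling any fixed point of $T_\beta$ by its distance to $\tilde{\nu} \equiv 1$ in a suitable negative-order Sobolev norm, and concluding via a contraction argument. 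Pushing this global step all the way to the sharp threshold $\beta_c$, rather than only to some $\beta$ strictly below it, is the delicate part of the proof; the bifurcation-type argument producing non-uniqueness above $\beta_c$ is by comparison straightforward.
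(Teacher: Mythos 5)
The paper does not prove Proposition~\ref{expt}; it states it as a known result and cites~\cite{CGPS19,CP10}, so there is no internal proof to compare against. With that caveat, your outline is structurally sound in places but leaves a genuine logical gap, which you partly acknowledge but which is a bit larger than you indicate.

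Your \emph{only if} direction is correct and complete: when $V=0$ the uniform measure is always a stationary solution, and $H$-stability makes the interaction energy convex via Parseval, so $\tilde{E}_{MF}$ is strictly convex; a strictly convex, differentiable functional has exactly one critical point (the minimiser), so by Proposition~\ref{tfae} there is a unique steady state for all $\beta$, ruling out a phase transition. Your second-variation computation $\delta^2\tilde{E}_{MF}[1](\phi,\phi) = \sum_{k\neq 0}(\beta^{-1}+\hat{W}(k))\lvert\hat{\phi}(k)\rvert^2$ is also correct, and it shows that for $\beta > \beta_\ast := -1/\min_{k\neq 0}\hat{W}(k)$ the uniform state is not a local minimiser, so there are at least two steady states.

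The gap is the one you flag -- uniqueness below the threshold -- but it is subtler than your sketch suggests, in two ways. First, Definition~\ref{pt} requires a single sharp threshold $\beta_c$ with uniqueness for all $\beta < \beta_c$ and non-uniqueness for all $\beta > \beta_c$; you have uniqueness for $\beta$ small (Proposition~\ref{uniqueness}) and non-uniqueness for $\beta > \beta_\ast$, but nothing in your argument rules out the possibility of non-uniqueness appearing and disappearing as $\beta$ varies in between, so you have not yet shown that the set $\{\beta : \text{multiple steady states}\}$ is an interval. Second, setting $\beta_c := \beta_\ast$ is not always correct: for discontinuous phase transitions (which occur for generic $W \notin \mathbf{H}_s$, see~\cite{CP10,CGPS19}), a non-uniform branch of steady states appears at some $\beta_c$ strictly smaller than $\beta_\ast$, away from the uniform branch, so the local linearisation at $\tilde{\nu}\equiv 1$ is blind to it. Your contraction strategy for $T_\beta$ would, at best, give uniqueness up to some $\beta_0 < \beta_\ast$ rather than the sharp $\beta_c$, and the heart of the proposition -- that a well-defined threshold exists and coincides with the loss of uniqueness of steady states -- is exactly the part that remains open in your sketch. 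The arguments in~\cite{CP10,CGPS19} that close this gap (free-energy monotonicity in $\beta$, structure of the bifurcation set, and in~\cite{CP10} a careful analysis of the variational problem for the minimiser) do not reduce to a linearised stability calculation.
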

As discussed in the introduction, a prototypical example of a system that exhibits a phase transition is given by the potentials $V=0,\; W=-\cos(2 \pi x)$. The corresponding particle system is referred to the noisy Kuramoto model. The structure of phase transitions for this system is remarkably simple and is discussed in the following proposition:
\begin{proposition}\label{XY}
Consider the quotiented periodic mean field system~\eqref{periodiceps=1} with $d=1$, $W=-\cos(2 \pi x)$, and $V= 0$. Then for $\beta\leq 2$, $\tilde{\nu}_\infty \equiv 1$ is the unique minimiser and steady state of~\eqref{periodiceps=1}. For $\beta>2$, the steady states of~\eqref{periodiceps=1} are given by $\tilde{\nu}_\infty\equiv 1$ and the family of translates of the measure $\tilde{\nu}^{\min}_\beta$ which is given by the following expression:
\begin{align}
\tilde{\nu}^{\min}_\beta= Z^{-1}e^{a \cos(2 \pi x)}, \qquad Z= \intto{e^{a \cos (2 \pi x)}} \, ,
\end{align}
with $a=a(\beta)$ the solution of the following nonlinear equation $a= \beta I_1(a)/I_0(a)$, where $I_1(a),\;I_0(a)$ are the modified Bessel functions of the first and zeroth kind respectively. Moreover for $\beta>2$, $\tilde{\nu}^{\min}_\beta$ (and its translates) are the only minimisers of the periodic mean field energy $\tilde{E}_{MF}$. Thus, $\beta_c=2$ is the critical temperature of~\eqref{periodiceps=1}.
\end{proposition}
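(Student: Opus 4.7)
\textbf{Proof plan for Proposition~\ref{XY}.} The plan is to exploit Proposition~\ref{tfae}: any steady state $\tilde{\nu}$ of~\eqref{periodiceps=1} with $V=0$ and $W=-\cos(2\pi x)$ must satisfy the self-consistency equation $\tilde{\nu}=Z^{-1} e^{-\beta W\ast\tilde{\nu}}$. Using the product-to-sum identity $\cos(2\pi(x-y))=\cos(2\pi x)\cos(2\pi y)+\sin(2\pi x)\sin(2\pi y)$, one computes
\[
W\ast\tilde{\nu}(x)=-m_c\cos(2\pi x)-m_s\sin(2\pi x), \qquad m_c:=\intto{\cos(2\pi y)\tilde{\nu}(y)},\;\; m_s:=\intto{\sin(2\pi y)\tilde{\nu}(y)}.
\]
Writing $\beta m_c=a\cos(2\pi\theta)$ and $\beta m_s=a\sin(2\pi\theta)$ with $a=\beta(m_c^2+m_s^2)^{1/2}\ge0$, the self-consistency equation becomes $\tilde{\nu}(x)=Z^{-1}e^{a\cos(2\pi(x-\theta))}$. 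Since~\eqref{periodiceps=1} (with $V=0$) is translation invariant, we may reduce to $\theta=0$ and view the orbit under translations as the family claimed. Substituting back and using the standard identities $\intto{e^{a\cos(2\pi y)}}=I_0(a)$ and $\intto{\cos(2\pi y)e^{a\cos(2\pi y)}}=I_1(a)$ (while $m_s=0$ by the symmetry $y\mapsto -y$), the consistency relation collapses to the scalar equation $a=\beta\, I_1(a)/I_0(a)$.

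The next step is the analysis of $f(a):=I_1(a)/I_0(a)$ on $[0,\infty)$. From the power-series expansions, $f(0)=0$, $f'(0)=\tfrac12$, and $\lim_{a\to\infty}f(a)=1$; the ratio $f$ is strictly increasing and strictly concave on $(0,\infty)$ (a classical Turán-type inequality for modified Bessel functions). Strict concavity together with $f(0)=0$ forces $g(a):=f(a)/a$ to be strictly decreasing on $(0,\infty)$, with $g(0^+)=\tfrac12$ and $g(\infty)=0$. Hence $a=\beta f(a)$ has only the trivial root $a=0$ when $\beta\le 2$ (the tangent bound gives $\beta f(a)<a$ for $a>0$), while for $\beta>2$ there is exactly one positive root $a=a(\beta)$, namely $a=g^{-1}(1/\beta)$. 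This classifies the steady states: for $\beta\le2$ only the uniform $\tilde{\nu}_\infty\equiv 1$ survives, and for $\beta>2$ one gets $\tilde{\nu}_\infty\equiv 1$ together with the translation orbit of $\tilde{\nu}^{\min}_\beta=Z^{-1}e^{a\cos(2\pi x)}$.

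To finish, I have to identify the minimisers of $\tilde{E}_{MF}$ in the regime $\beta>2$. Since $W\ast 1\equiv 0$, a direct substitution gives $\tilde{E}_{MF}[1]=0$. I compute the second variation along a mean-zero perturbation $\phi$ of the uniform state:
\[
\frac{d^2}{d\epsilon^2}\bigg|_{\epsilon=0}\tilde{E}_{MF}[1+\epsilon\phi]=\beta^{-1}\intto{\phi^2}+\intto{\phi\,(W\ast\phi)}.
\]
Choosing $\phi(x)=\cos(2\pi x)$ yields $W\ast\phi=-\tfrac12\cos(2\pi x)$, so the quadratic form equals $\tfrac{1}{2\beta}-\tfrac14$, which is strictly negative for $\beta>2$. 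Hence $\tilde{\nu}_\infty$ is a saddle and cannot be a minimiser. Existence of a minimiser follows from lower semicontinuity and coercivity (via the entropy) of $\tilde{E}_{MF}$, and every minimiser is a critical point; combined with the classification in Step~2, the minimisers must lie in the translation orbit of $\tilde{\nu}^{\min}_\beta$. Translation invariance of $\tilde{E}_{MF}$ (available since $V=0$) ensures all translates share the same energy and are therefore all minimisers. This identifies $\beta_c=2$ as the phase transition point in the sense of Definition~\ref{pt}.

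\textbf{Main obstacles.} The computations above are largely routine once the key reduction to the Bessel-ratio equation is made; the only subtlety is the strict concavity of $f(a)=I_1(a)/I_0(a)$, which is the technical heart of Step~2 and guarantees both the nonexistence of nonzero solutions for $\beta\le 2$ (in particular at the threshold $\beta=2$) and the uniqueness of the nonzero solution for $\beta>2$. This can be handled either by invoking the Turán-type inequality $I_0(a)I_2(a)<I_1(a)^2$ (combined with the recurrence $I_0-I_2=(2/a)I_1$) to verify $f'(a)<f(a)/a$, or by a direct series-coefficient argument.
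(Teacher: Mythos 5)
Your proof is correct and essentially self-contained; however, the paper does \emph{not} prove Proposition~\ref{XY} itself but instead defers to~\cite[Proposition 6.1]{CGPS19}. What you supply is the argument that the cited reference carries out, so in that sense it is the ``same'' approach, merely made explicit here. The key steps --- reducing the self-consistency relation~\eqref{eq:critical point} to the finite-dimensional equation $a=\beta I_1(a)/I_0(a)$ via the Fourier expansion of $W*\tilde\nu$, noting $m_s=0$ by parity and $Z=I_0(a)$, $m_c=I_1(a)/I_0(a)$, and then classifying the roots --- all check out. The algebra in the second-variation computation at the uniform state is correct: $\tfrac{1}{2\beta}-\tfrac14<0$ for $\beta>2$, which combined with existence of minimisers and the completed classification of critical points forces the minimisers to be the translates of $\tilde\nu_\beta^{\min}$.

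One small remark on Step~2: the assertion that $I_1/I_0$ is strictly concave on all of $(0,\infty)$ is stronger than what you need and, while believable, is not as standard a ``classical'' fact as you suggest. The quantity that actually enters is the monotonicity of $g(a)=I_1(a)/(aI_0(a))$, and you correctly flag the cleaner route yourself: from $f'(a)=1-f(a)/a-f(a)^2$ (using $I_0'=I_1$ and $I_1'=I_0-I_1/a$), the inequality $f'(a)<f(a)/a$ is equivalent to $I_1(a)^2>I_0(a)I_2(a)$, which is the genuine classical Tur\'an inequality once one substitutes $I_2=I_0-\tfrac{2}{a}I_1$. I would recommend phrasing the argument directly in terms of the monotonicity of $g$ via Tur\'an rather than routing it through strict concavity of $f$, which is both unnecessary and a slightly more delicate claim to cite. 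With that minor adjustment the proof is complete and matches what the paper references.
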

A proof of the above result can be found in~\cite[Proposition 6.1]{CGPS19}. A depiction of the bifurcation diagram of the noisy Kuramoto system can be found in Figure~\ref{fig:kurbif}.
\begin{figure}[ht]
\centering
\begin{minipage}[c]{0.45\textwidth}
\centering
    \includegraphics[width=\linewidth]{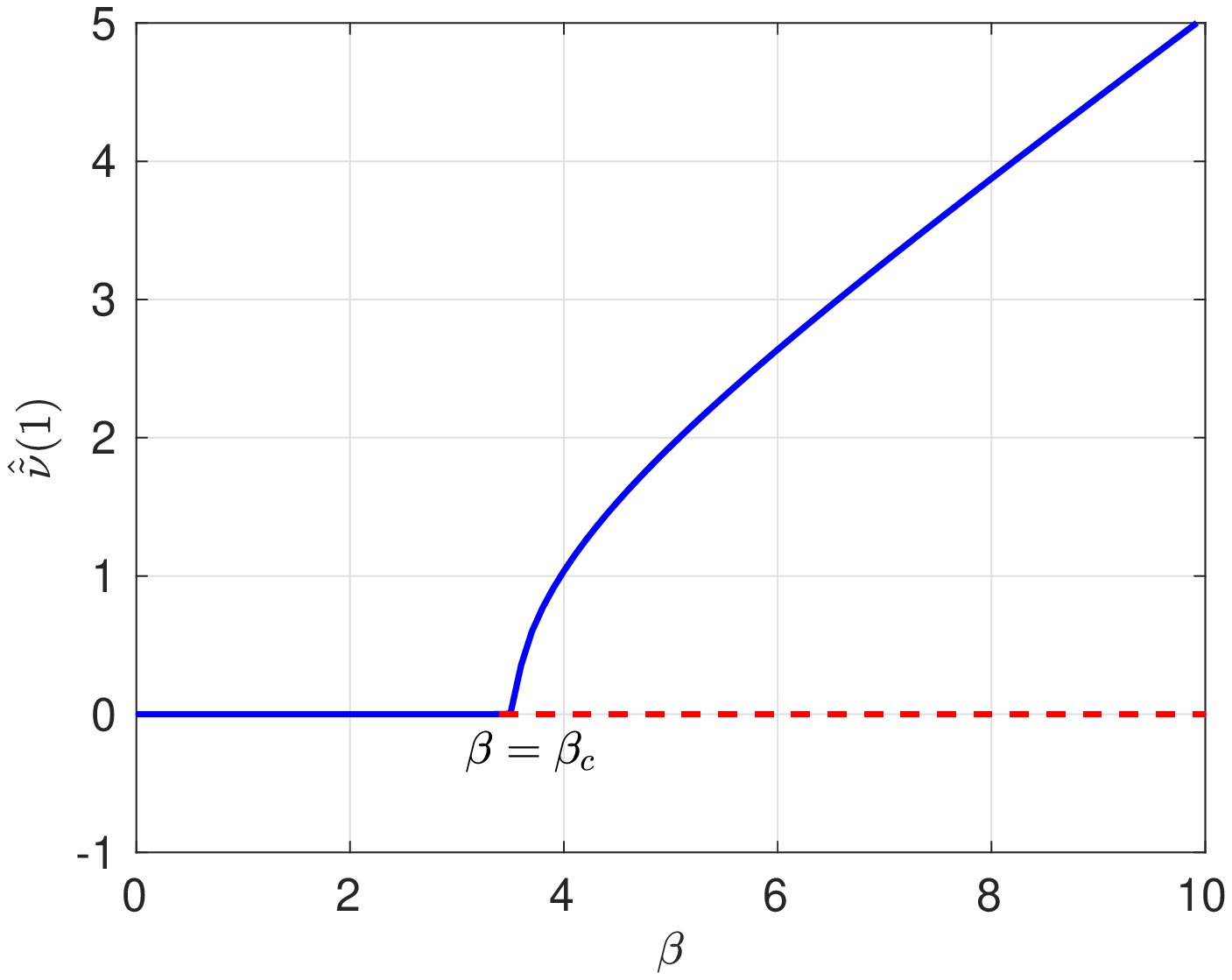}
    \caption*{(a)}
\end{minipage}
\begin{minipage}[c]{0.45\textwidth}
\centering
    \includegraphics[width=\linewidth]{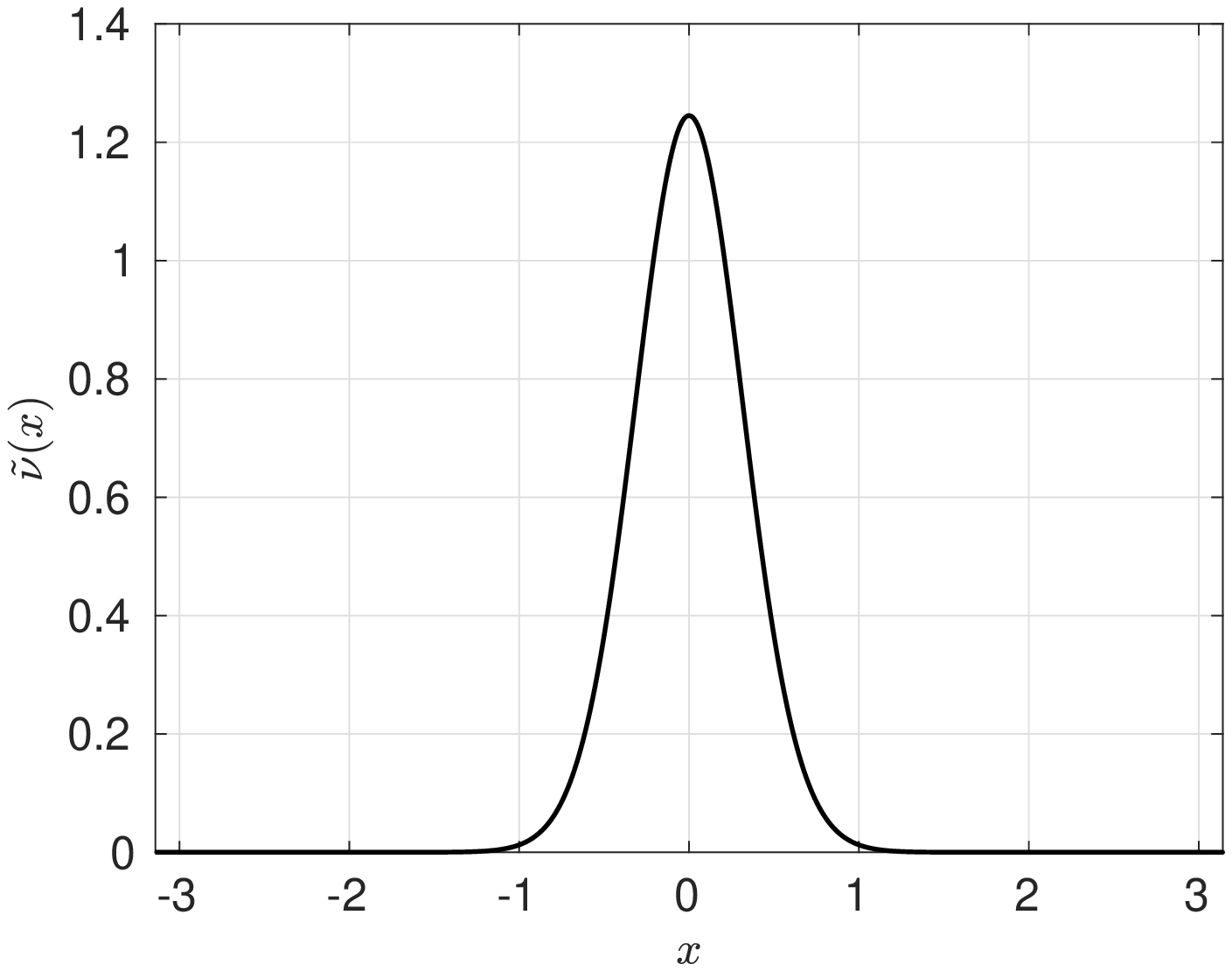}
     \caption*{(b)}
\end{minipage}
\caption{(a). The bifurcation diagram for the noisy Kuramoto system: the solid blue line denotes the stable branch
of solutions while the dotted red line denotes the unstable branch of solutions (b). An example of a clustered steady state $\tilde{\nu}_\beta^{\min}$ representing phase synchronisation of the oscillators}
\label{fig:kurbif}
\end{figure}

We can now start stating our results concerning the effect of the presence of a phase transition on the combined diffusive-mean field limit. In general, we have that for the large temperature regime the limits commute:
\begin{corollary}\label{cor:2}
Assume that $\rho_0^{\eps,N}= \bra*{\rho_0^{\eps}}^{\otimes N}$ for some $\rho_0^\eps \in \cP(\R^d)$ and that
$$
\lim_{N\to\infty}\lim_{\eps\to 0}\rho^{\eps,N}_0= \lim_{\eps\to 0} \lim_{N\to\infty} \rho^{\eps,N}_0=X_0 = \delta_{\rho_0^*}\in\mathcal{P}(\mathcal{P}(\R^d)),
$$
where $\rho_0^* \in \cP(\R^d)$ is the weak-$*$ limit of $\rho_0^\eps$. Then, there exists an explicit $\beta_0\in(0,\beta_c]$ depending on $\norm{V}_{C^2(\T^d)}$ and $\norm{W}_{C^2(\T^d)}$ such that for $\beta<\beta_0$ the limits commute:
$$
\lim_{N\to\infty}\lim_{\eps\to 0} \rho^{\eps,N}(t)=\lim_{\eps\to 0} \lim_{N\to\infty} \rho^{\eps,N}(t)= S_t^{\min}\# X_0.
$$
\end{corollary}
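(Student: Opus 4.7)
The plan is to verify the hypotheses of Theorem~\ref{thm: N then eps} and Corollary~\ref{cor:beforephasetransition} at sufficiently high temperature; each produces one of the two iterated limits, and these are shown to coincide. First, Proposition~\ref{uniqueness} provides a threshold $\beta_1\leq \beta_c$ such that for $\beta<\beta_1$ the periodic mean field energy $\tilde{E}_{MF}$ admits a unique minimiser $\tilde{\nu}^{\min}$, which by Proposition~\ref{tfae} is also the unique steady state of~\eqref{periodiceps=1}. Combined with the standing chaotic assumption $\rho_0^{\eps,N}=\bra*{\rho_0^\eps}^{\otimes N}$ on the initial data, this immediately verifies the hypothesis of Theorem~\ref{thm: N then eps} and yields
\[
\lim_{N\to\infty}\lim_{\eps\to 0} \rho^{\eps,N}(t) = S_t^{\min}\# X_0.
\]

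To obtain the other iterated limit via Corollary~\ref{cor:beforephasetransition}, I need uniform-in-initial-data exponential attraction of $\tilde{\nu}^{\min}$ for the flow~\eqref{periodiceps=1}. The approach is entropic: differentiating the relative entropy $H(\rho_t \mid \tilde{\nu}^{\min})$ along~\eqref{periodiceps=1} and using the self-consistency equation~\eqref{eq:critical point} to split off the reference measure produces
\[
\frac{d}{dt} H(\rho_t \mid \tilde{\nu}^{\min}) = -\beta^{-1} I(\rho_t \mid \tilde{\nu}^{\min}) + \intt{\nabla \log \tfrac{\rho_t}{\tilde{\nu}^{\min}} \cdot \nabla W \ast (\rho_t - \tilde{\nu}^{\min})\,\rho_t},
\]
where $I(\cdot \mid \cdot)$ denotes relative Fisher information. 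The cross term is controlled by Cauchy--Schwarz, the bound $\norm{\nabla W}_{\mathrm{Lip}} \leq \norm{W}_{C^2}$, and Talagrand's $T_2$ inequality $d_2 \leq \sqrt{2 c_{\mathrm{LSI}}(\tilde{\nu}^{\min}) H}$; with Young's inequality this absorbs a fraction of the Fisher dissipation and leaves a pure entropy term. Another application of the log-Sobolev inequality for $\tilde{\nu}^{\min}$ then closes the Grönwall loop into
\[
\frac{d}{dt} H(\rho_t \mid \tilde{\nu}^{\min}) \leq -\lambda(\beta)\, H(\rho_t \mid \tilde{\nu}^{\min}),
\]
with $\lambda(\beta)>0$ whenever $\beta$ is small enough. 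A final use of Talagrand converts exponential entropy decay into the exponential $d_2$-decay required by Corollary~\ref{cor:beforephasetransition}, giving $\lim_{\eps \to 0}\lim_{N\to\infty}\rho^{\eps,N}(t) = S_t^{*}\# X_0$; the semigroups $S_t^*$ and $S_t^{\min}$ coincide because the corrector problems~\eqref{limiting corrector} and~\eqref{min corrector} are built against the same measure $\tilde{\nu}^*=\tilde{\nu}^{\min}$, identifying the two iterated limits.

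The explicit threshold $\beta_0$ is then extracted by tracking the log-Sobolev constant $c_{\mathrm{LSI}}(\tilde{\nu}^{\min})$. Plugging $V,W \in C^0(\T^d)$ into~\eqref{eq:critical point} gives the a priori bound $\norm{\log \tilde{\nu}^{\min}+\log Z}_{L^\infty}\leq \beta(\norm{V}_{C^0}+\norm{W}_{C^0})$; the Holley--Stroock perturbation principle applied to the uniform measure on $\T^d$ then yields a lower bound on $c_{\mathrm{LSI}}(\tilde{\nu}^{\min})$ in terms of $\beta,\norm{V}_{C^0},\norm{W}_{C^0}$ alone, and solving $\lambda(\beta)>0$ produces an explicit $\beta_0 \leq \beta_c$ depending only on $\norm{V}_{C^2}$ and $\norm{W}_{C^2}$. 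The main obstacle is precisely this quantitative bookkeeping: the Bakry--Émery criterion gives no contraction on the torus directly, since the Ricci curvature vanishes, and a naive synchronous coupling of two copies of~\eqref{eq:mckeanSDEintro} yields only a Grönwall inequality with a positive exponent. Convergence must therefore be extracted from the entropic dissipation above, and the smallness of $\beta$ is what allows the interaction cross term to be absorbed into the Fisher dissipation; this balance is what fixes the value of $\beta_0$.
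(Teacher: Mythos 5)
Your proposal is correct, but it replaces the paper's mechanism for the key lemma with a genuinely different one. The paper's own route to the exponential attraction (Lemma~\ref{Uniformexponentialconvergence}) is a reflection coupling in the style of Eberle: it constructs a concave modification $d_f$ of the torus distance (Appendix~\ref{ap:coupling}), couples~\eqref{eq:mckeanSDEq} and~\eqref{eq:mckeanSDEs} via a reflection/synchronous hybrid, and obtains a Gr\"onwall inequality for $\mathbb{E}[f(\gamma_t)]$; the threshold $\beta_0$ is the largest $\beta$ for which the contraction rate $c\beta^{-1}$ dominates the perturbation $\|W''\|_\infty$, phrased through the convexity defect $\kappa = \inf V'' + \inf W''$. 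You instead run the entropy--dissipation calculation for $\mathcal{H}(\tilde\nu(t)\,|\,\tilde\nu^{\min})$, absorb the cross term into the Fisher information via Cauchy--Schwarz, the Lipschitz bound on $\nabla W$, Talagrand's $T_2$, and Young's inequality, and close with log-Sobolev for $\tilde\nu^{\min}$, whose constant you bound by Holley--Stroock against the uniform measure on $\T^d$. Both yield an explicit $\beta_0$ and both conclude the proof of the corollary in the same way (Theorem~\ref{thm: N then eps} for one iterated limit, Theorem~\ref{thm:variabledata}/Corollary~\ref{cor:beforephasetransition} for the other, with $\tilde\nu^*=\tilde\nu^{\min}$ identifying the semigroups). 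One advantage of the paper's coupling is that it works directly at the level of the Wasserstein distance for arbitrary initial data in $\cP(\T^d)$; your entropy argument needs $\mathcal{H}(\tilde\nu_0\,|\,\tilde\nu^{\min})<\infty$, which fails for singular initial data, so you would need to insert a short parabolic-smoothing step (restart the Gr\"onwall inequality from $t_0>0$, where the $L^\infty$ bound from regularisation is uniform over $\cP(\T^d)$) to recover the uniform-in-$\eps$ algebraic decay in~\eqref{exponential convergence}. Also note a minor mischaracterisation: the coupling that the paper uses is a reflection coupling, not the synchronous one you dismiss, so the ``obstacle'' you describe is present only in your chosen route, not in the paper's. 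Finally, your entropy threshold depends on $\|V\|_{C^0},\|W\|_{C^0},\|W\|_{C^2}$ via Holley--Stroock, which is typically more pessimistic than the paper's threshold phrased through the semiconvexity constant $\kappa$, but both satisfy the claimed dependence on $\|V\|_{C^2(\T^d)}$ and $\|W\|_{C^2(\T^d)}$.
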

Moreover, for rapidly varying initial data and $V=0$, we can show that the limits commute all the way up to the phase transition. We have the following result:
\begin{corollary}\label{cor:1}
Assume $V=0$ and $\beta<\beta_c$, the critical temperature. Assume further that 
\begin{align}
\rho_0^{\eps,N}= \eps^{-d}\rho_0(\eps^{-1}x_1) \otimes \dots \otimes \eps^{-d}\rho_0(\eps^{-1}x_N) \in \cP_{\sym}\bra*{(\R^{d})^{\times N}}  
\end{align}
for some fixed $\rho_0 \in \cP(\R^d)$. Then the limits commute, i.e. 
$$
\lim_{N\to\infty}\lim_{\eps\to 0} \rho^{\eps,N}(t)=\lim_{\eps\to 0} \lim_{N\to\infty} \rho^{\eps,N}(t)= S_t^{\min}\# X_0.
$$
where $X_0= \delta_{\delta_0} \in \cP(\cP(\R^d))$. If $W$ is $H$-stable, this result holds for all $0<\beta<\infty$ and  arbitrary chaotic initial data.
\end{corollary}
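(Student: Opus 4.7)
The plan is to invoke Theorem~\ref{thm: N then eps} for the clockwise limit and Theorem~\ref{thm:variabledata} (or Corollary~\ref{cor:beforephasetransition} in the $\mathbf{H}_s$ case) for the anticlockwise limit, then to observe that both routes produce the same object $S_t^{\min}\#X_0$. As a preliminary, $\rho_0^{\eps,N}$ is chaotic in $N$ with limit $\delta_{\rho_0^\eps}$ where $\rho_0^\eps=\eps^{-d}\rho_0(\eps^{-1}\cdot)\weakstar\delta_0$, so both iterated limits of the initial data agree and give $X_0=\delta_{\delta_0}$. Since $V=0$, the uniform measure on $\T^d$ is always a steady state of~\eqref{periodiceps=1}; the assumption $\beta<\beta_c$, or Proposition~\ref{expt} in the $\mathbf{H}_s$ case, promotes it to the unique steady state and, by Proposition~\ref{uniqueness}, to the unique minimiser of $\tilde{E}_{MF}$. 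Plugging $\tilde\nu^{\min}\equiv 1$ into the corrector equations~\eqref{limiting corrector} and~\eqref{min corrector} forces $\Psi^*=\Psi^{\min}=0$, so $A^{\eff}_*=A^{\eff}_{\min}=\beta^{-1}I$ and $S_t^*=S_t^{\min}$ coincide with the standard heat semigroup of diffusivity $\beta^{-1}$.

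The clockwise limit is then immediate from Theorem~\ref{thm: N then eps}. For the anticlockwise limit in the first case, the crucial observation is that the rescaled datum $\nu_0^\eps(y)=\eps^d\rho_0^\eps(\eps y)=\rho_0(y)$ is independent of $\eps$, so its periodic rearrangement $\tilde\nu_0^\eps=\tilde\rho_0$ is fixed and the trajectory $\tilde\nu^\eps(t)$ entering hypothesis~\eqref{exponential convergence} of Theorem~\ref{thm:variabledata} does not depend on $\eps$, rendering the supremum over $\eps$ cosmetic. In the $\mathbf{H}_s$ case, where we allow arbitrary chaotic initial data, this shortcut is unavailable and we instead invoke Corollary~\ref{cor:beforephasetransition}, whose hypothesis is exponential convergence from every initial datum.

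Thus the main analytic obstacle is to establish a decay rate $d_2(\tilde\nu(t),\tilde\nu^{\min})\le Ct^{-p}$ with $p>1$ (in the first case from the specific datum $\tilde\rho_0$; in the $\mathbf{H}_s$ case uniformly in the initial datum). The plan is to show, under either hypothesis, that $\tilde{E}_{MF}$ is displacement $\lambda$-convex around the uniform measure with $\lambda=\lambda(\beta,\beta_c)>0$: in the $\mathbf{H}_s$ regime the bound $\hat{W}(k)\ge 0$ makes the interaction outright convex at every $\beta>0$, so the free energy is convex; in the first case, a standard linearisation of $\tilde{E}_{MF}$ at $\tilde\nu^{\min}\equiv 1$ together with $\beta<\beta_c$ supplies a spectral gap on each non-zero Fourier mode. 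Standard gradient-flow / HWI arguments on the torus then upgrade $\lambda$-convexity to exponential decay $d_2(\tilde\nu(t),1)\le e^{-\lambda t}d_2(\tilde\nu_0,1)$, which amply implies both~\eqref{exponential convergence} and the hypothesis of Corollary~\ref{cor:beforephasetransition}, completing the proof.
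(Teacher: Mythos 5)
Your high-level architecture---identify $X_0=\delta_{\delta_0}$, invoke Theorem~\ref{thm: N then eps} for the clockwise route, invoke Theorem~\ref{thm:variabledata} for the anticlockwise route, note that rapidly varying data yield an $\eps$-independent periodic trajectory---is exactly the paper's. But the analytic core, establishing the decay~\eqref{exponential convergence}, has a real gap.

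You propose to prove global displacement $\lambda$-convexity of $\tilde E_{MF}$ and conclude $d_2(\tilde\nu(t),1)\le e^{-\lambda t}d_2(\tilde\nu_0,1)$ for \emph{every} initial datum via HWI. This is not available. For $\beta<\beta_c$ with $W\notin\mathbf{H}_s$ (e.g.\ $W=-\cos$), the free energy has a unique minimiser, but uniqueness of the minimiser does not make the functional globally displacement $\lambda$-convex; the spectral gap you extract by linearising at $\tilde\nu\equiv 1$ is a \emph{local} statement, and a local gap at the minimum cannot, by itself, feed the HWI machinery (which requires convexity along geodesics between arbitrary pairs of measures) to produce a global contraction. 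In the $\mathbf{H}_s$ case you claim that $\hat W(k)\ge 0$ makes the interaction ``outright convex''; this conflates convexity of $\rho\mapsto\tfrac12\iint W(x-y)\,\dx{\rho}\,\dx{\rho}$ along \emph{linear} interpolations (which $\hat W\ge 0$ does give, since it is the positivity of a quadratic form) with displacement convexity, which requires $W$ itself to be convex as a function. The two notions are genuinely different: $W=\cos(2\pi x)$ is $H$-stable but not convex, so its interaction energy is not displacement convex, and no Wasserstein contraction rate follows from $\hat W\ge 0$ by this route.

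The paper avoids both obstacles by a two-step argument that is local in spirit but global in reach. Step 1 integrates the energy-dissipation identity $\frac{\dx{}}{\dx{t}}\tilde E_{MF}[\tilde\nu(t)]=-D(\tilde\nu(t))$, which forces $D(\tilde\nu(t_n))\to 0$ along some $t_n\to\infty$; a uniform $\SobH^1$-bound on $\sqrt{\tilde\nu(t_n)}$, compactness, and lower semicontinuity of $D$ then give $L^1$ subsequential convergence to a zero of $D$, i.e.\ a steady state, which uniqueness forces to equal $\tilde\nu_\infty\equiv 1$. Step 2 invokes the \emph{local} exponential stability result of~\cite{CP10} (valid for $\beta<\beta_*$, combined with $\beta_c\le\beta_*$ from~\cite{CGPS19}) once the trajectory is inside an $L^1$-ball of $\tilde\nu_\infty$. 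For $W\in\mathbf{H}_s$ and arbitrary chaotic data, the paper does not use displacement convexity either; it bounds $\mathcal{H}(\tilde\nu(1)\,|\,\tilde\nu_\infty)$ via Duhamel and then applies~\cite[Proposition~3.1]{CGPS19}, an $L^2$-linearised decay of relative entropy, which in turn dominates $d_2$. This distinction is precisely what makes the argument reach all the way to $\beta_c$ (or $\beta=\infty$ in the $\mathbf{H}_s$ case) rather than only the explicit high-temperature regime where true displacement convexity would hold.
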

The proof of Corollaries \ref{cor:2} and \ref{cor:1} can be found in Section~\ref{sec:phase}.
\begin{remark}
The results of the preceding corollaries apply to the noisy Kuramoto model.
\end{remark}

We are now ready to present our results above the critical temperature. As we are interested in illustrating our results in a clear way, we consider a simple system that undergoes a phase transition and show that the limits do not commute ahead of the phase transition. We do not consider the noisy Kuramoto model because, as demonstrated in Proposition~\ref{XY}, the minimisers of the $\tilde{E}_{MF}$ are not unique ahead of the phase transition; the entire family of translates of $\tilde{\nu}_{\beta}^{\min}$ are minimisers. Thus we cannot apply the results of Theorem~\ref{thm: N then eps} directly. Indeed, applying Lemma~\ref{lem:gammaperiodic}, one can show that the $N$-particle Gibbs measure $M_N$ converges, in the sense of Definitions~\ref{def:1} and~\ref{def:empirical}, to $X \in \cP(\cP(\T))$, where $X$ is supported uniformly on the set of translates of $\tilde{\nu}_{\beta}^{\min}$.

The alternative is to work in a quotient space as in~\cite{Rey2018,malrieu2003convergence} or to add a small confinement to break the translation invariance of the problem. We choose to do the latter. However, we do expect our results to hold true even in the translation-invariant setting but we do not deal with what we feel is essentially a technical issue in this paper. 

In particular, we consider in 1 space dimension the dynamics generated by the potentials $V=- \eta \cos(2\pi x)$ and $W=-\cos(2\pi x)$ with $0<\eta<1$. In this case, we  have the following characterisation of phase transitions:

\begin{lemma}~\label{phase transition}
Consider the quotiented periodic mean field system~\eqref{periodiceps=1} with $d=1$, $W=-\cos(2 \pi x)$, and $V= -\eta \cos(2 \pi x)$ for a fixed $\eta \in(0,1)$. Then there exists a value of the parameter $\beta=\beta_c$ such that:
\begin{itemize}
    \item For $\beta<\beta_c$, there exists a unique steady state of the quotiented periodic system \eqref{periodiceps=1} given by
\begin{align}
\tilde{\nu}^{\min}(x)= Z_{\min}^{-1}e^{a^{\min}  \cos(2 \pi x)} \label{min1}\, , \qquad Z_{\min}= \intto{e^{a^{\min} \cos(2 \pi x)}} \, ,
\end{align}
for some $a^{\min}=a^{\min}(\beta), a^{\min}>0$, which is the unique minimiser of the periodic mean field energy $\tilde{E}_{MF}$ \eqref{periodicmeanfieldenergy}. 

\item For $\beta>\beta_c$, there exist at least 2 steady states of the quotiented periodic system \eqref{periodiceps=1} given by

\begin{align}
\tilde{\nu}^{\min}(x)&= Z_{\min}^{-1}e^{a^{\min}  \cos(2 \pi x)}\, , && \qquad Z_{\min}= \intto{e^{a^{\min} \cos(2 \pi x)}} \, , \\
\tilde{\nu}^{*}(x)&= Z_*^{-1}e^{ a^{*}  \cos(2 \pi x)} \, ,&& \qquad Z_*^{-1}= \intto{e^{a^{*} \cos(2 \pi x)}} \label{min2} \, ,
\end{align}
where $a^{*} < 0< a^{\min}$ and both constants depend on $\beta$. Here $\tilde{\nu}^{\min}$ is the unique minimiser and $\tilde{\nu}^*$ is a non-minimising critical points of the periodic mean field energy $\tilde{E}_{MF}$  \eqref{periodicmeanfieldenergy}. Moreover, $a^*\ne - a^{\min}$.
\end{itemize}
\end{lemma}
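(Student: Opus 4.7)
The plan is to apply the self-consistency characterization of steady states from Proposition~\ref{tfae} and reduce the fixed-point problem to a scalar transcendental equation via Fourier analysis, then perform a standard bifurcation analysis and identify the global minimizer through an explicit closed-form comparison. Expanding $\cos(2\pi(x-y)) = \cos(2\pi x)\cos(2\pi y) + \sin(2\pi x)\sin(2\pi y)$ and introducing the Fourier moments $c = \int_0^1 \cos(2\pi y)\tilde{\nu}(y)\,dy$ and $s = \int_0^1 \sin(2\pi y)\tilde{\nu}(y)\,dy$, I compute $-\beta(V+W*\tilde{\nu})(x) = \beta(c+\eta)\cos(2\pi x) + \beta s\sin(2\pi x)$, so every steady state has the form $\tilde{\nu}(x) = Z^{-1}\exp\bra*{\beta A\cos(2\pi(x-x_0))}$ with $A = \sqrt{(c+\eta)^2+s^2}$ and $\tan(2\pi x_0) = s/(c+\eta)$. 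Substituting this ansatz back into the definitions of $c$ and $s$ and using odd parity of $\sin u$ yields the consistency $\eta s = 0$, hence $s = 0$ since $\eta > 0$. Every steady state is therefore of the form $\tilde{\nu}_a(x) = Z(a)^{-1}e^{a\cos(2\pi x)}$, where $a \in \R$ solves the scalar self-consistency
\begin{equation*}
a = \beta(g(a) + \eta), \qquad g(a) := I_1(a)/I_0(a).
\end{equation*}

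I next analyze the transcendental equation $f(a) := a - \beta g(a) = \beta\eta$ using classical properties of modified Bessel functions: $g$ is smooth, odd, strictly increasing with $g(\pm\infty) = \pm 1$, while $g'$ is even with unique maximum $g'(0) = 1/2$ and $g'(a)\to 0$ as $|a|\to\infty$. For $\beta < 2$, $f'(a) \ge 1 - \beta/2 > 0$ so $f$ is strictly increasing, and $f(a) = \beta\eta > 0$ has a unique root $a^{\min} > 0$. For $\beta > 2$, $f$ has a local maximum at $a_-(\beta) = -a_+(\beta) < 0$ and a local minimum at $a_+(\beta) > 0$, both determined by $g'(a_\pm) = 1/\beta$; the branch $(a_+,\infty)$ always contributes a unique positive root $a^{\min}>a_+$, while additional negative roots appear precisely when $F(\beta) := f(a_-(\beta)) - \beta\eta \ge 0$. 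Using $F(2^+) = -2\eta < 0$ together with $F(\beta)/\beta \to 1 - \eta > 0$ as $\beta \to \infty$ (from $g(a_-) \to -1$ and $a_-(\beta)/\beta \to 0$), continuity yields a critical $\beta_c \in (2,\infty)$ with $F(\beta_c) = 0$, and for $\beta > \beta_c$ one obtains exactly two additional negative roots $a_1^* < a_-(\beta) < a_2^* < 0$, either of which serves as $a^*$. The inequality $a^* \ne -a^{\min}$ follows by contradiction: if $a^* = -a^{\min}$, then adding the self-consistency relations $a^{\min} = \beta(g(a^{\min}) + \eta)$ and $-a^{\min} = \beta(-g(a^{\min}) + \eta)$ (using oddness of $g$) yields $0 = 2\beta\eta$, contradicting $\eta > 0$.

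To identify $\tilde{\nu}^{\min}$ as the \emph{unique} global minimizer of $\tilde{E}_{MF}$, I substitute $g(a) = a/\beta - \eta$ into the explicit expression for $\tilde{E}_{MF}[\tilde{\nu}_a]$ and simplify to obtain the closed form
\begin{equation*}
\tilde{E}_{MF}[\tilde{\nu}_a] = \tfrac{1}{2}\bra*{g(a)}^2 - \beta^{-1}\log Z(a) =: h(a)
\end{equation*}
along the steady-state branch, an \emph{even} function of $a$ with $h'(a) = g(a)(g'(a) - 1/\beta)$, $h(0) = 0$, and $h(a) \to -\infty$ as $|a|\to\infty$; hence $h$ strictly increases on $(0,a_+)$, attains its global maximum at $\pm a_+$, and strictly decreases on $(a_+,\infty)$. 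For $\beta < \beta_c$ uniqueness is immediate by Proposition~\ref{uniqueness}. For $\beta > \beta_c$, the implicit function theorem applied to the self-consistency gives $da/d\eta = \beta/(1 - \beta g'(a))$; evaluating at each branch yields $da^{\min}/d\eta > 0$ and $da_1^*/d\eta > 0$ (so $|a^{\min}|$ strictly increases while $|a_1^*|$ strictly decreases as $\eta$ grows from $0$), whereas $da_2^*/d\eta < 0$ (so $|a_2^*|$ increases from $0$, staying below $a_+$). Combined with the monotonicity of $h$, this produces the strict chain of inequalities $h(a^{\min}) < h(a_1^*) < 0 < h(a_2^*)$ for $\eta$ small, starting from the degenerate symmetric configuration at $\eta = 0$. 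The main technical obstacle is propagating these infinitesimal strict inequalities uniformly to all $\eta \in (0,1)$ and all $\beta > \beta_c$ — in particular, ruling out a crossing of $|a^{\min}|$ and $|a_1^*|$ via a finer inspection of the consistency relation and the shape of $h$; once established, this verifies $\tilde{\nu}^{\min}$ as the unique minimizer, completing the proof.
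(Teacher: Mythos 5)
Your Fourier reduction to the scalar fixed-point equation $a=\beta(g(a)+\eta)$ with $g=I_1/I_0$ is correct and matches the paper (which writes $r_0=g$), and your parity argument showing $s=0$ for \emph{every} steady state is a genuine improvement over the paper, which only considers symmetric profiles without first proving all steady states are symmetric. Your bifurcation analysis of the scalar equation — the properties of $g$, the local extrema $\pm a_+(\beta)$ of $f(a)=a-\beta g(a)$, and the intermediate value argument producing a $\beta_c$ — is essentially sound, and your direct contradiction proof that $a^*\neq -a^{\min}$ is slicker than the paper's (which just invokes oddness of $r_0$). The closed-form energy $h(a)=\tfrac12 g(a)^2-\beta^{-1}\log Z(a)$ and the formula $h'(a)=g(a)\bigl(g'(a)-\beta^{-1}\bigr)$ are both correct.

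The genuine gap, which you acknowledge, is the final identification of $\tilde\nu^{\min}$ as the \emph{unique global minimizer} of $\tilde E_{MF}$. Your approach of comparing $h$-values across the roots $a^{\min}>0>a_2^*>-a_+>a_1^*$ requires ordering $|a^{\min}|$, $|a_1^*|$, $|a_2^*|$ uniformly in $\eta\in(0,1)$ and $\beta>\beta_c$, which you cannot complete by the continuation-from-$\eta=0^+$ argument you sketch. The paper sidesteps this entirely with a rearrangement inequality: by the Baernstein--Taylor theorem, any minimizer of $\tilde E_{MF}$ must be symmetric about $x=1/2$ and decreasing on $[0,1/2]$, which for the ansatz $\tilde\nu_a=Z(a)^{-1}e^{a\cos(2\pi x)}$ forces $a>0$. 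Since you (and the paper) have already shown the self-consistency has exactly one positive root for every $\beta$, this immediately yields uniqueness of the minimizer, with no energy comparison needed. This is the missing idea: rather than ranking $h$ over all critical $a$, you should rule out the $a<0$ branches as candidate minimizers a priori via symmetrization. A secondary, smaller issue is that your intermediate-value argument produces \emph{some} $\beta_c$ but does not establish monotonicity of $\beta\mapsto F(\beta)$, so the dichotomy ``unique below, multiple above'' is not quite proved as stated; the paper has the same soft spot, so this is not a disqualifying gap, but it is worth noting.
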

The proof of Lemma~\ref{phase transition} can be found in Section~\ref{sec:phase}.

Now, we are ready to state our results in this specific case, i.e. above the phase transition we can choose specific initial data for which the limits do not commute.
\begin{corollary}\label{cor:example2}
Assume that $V=-\eta\cos(2\pi x)$, $W=\cos(2\pi x)$ for a fixed $\eta \in (0,1)$, and that we are above the phase transition $\beta>\beta_c$. As in Proposition~\ref{phase transition}, we denote by $\tilde{\nu}^{\min}$ and  $\tilde{\nu}^*$ the minimiser and the nonminimising critical point of $\tilde{E}_{MF}$. We choose the following initial data:
$$
\rho^{\eps,N}_0(x)= \big(\eps^{-1} \rho_0^*(\eps^{-1}x_1)\big)\otimes ...\otimes  \big(\eps^{-1} \rho_0^*(\eps^{-1}x_N)\big)\in \mathcal{P}_{\sym}\big((\R^d)^N\big)\, ,
$$
where $\rho_0^* \in \cP(\R)$ satisfies
\begin{align}
\tilde{\nu}^*(A)=\sum_{k\in\Z^d} \rho_0^* (A+k) \,,
\label{dataquotient}
\end{align}  
for any measurable $A$, i.e. its periodic rearrangement is $\tilde{\nu}^*$.
Then, for every $t>0$, $\rho^{\eps,N}(t)$ the solution to \eqref{eq:LinearKolmogorov}, satisfies
$$
\lim_{N\to\infty}\lim_{\eps\to 0} \rho^{\eps,N}(t)= \delta_{\rho^{\min}(t)}\in \mathcal{P}(\mathcal{P}(\R)),
$$
where
\begin{align}
\rho^{\min}(t)= \sqrt{\beta} I_0(a^{\min})\frac{e^{-\frac{\beta I_0(a^{\min})^2 |x|^2}{2t}}}{ \sqrt{2\pi t}}\,.
\label{lim1}
\end{align}
On the other hand, we have that
$$
\lim_{\eps\to0}\lim_{N\to\infty}  \rho^{\eps,N}= \delta_{\rho^{*}(t)}\in \mathcal{P}(\mathcal{P}(\R))
$$
where 
\begin{align}
\rho^{*}(t)= \sqrt{\beta} I_0(-a^{*})\frac{e^{-\beta I_0(-a^{*})^2|x|^2}{2t}}{ \sqrt{2\pi t}}\,.
\label{lim2}
\end{align}
Finally, by Lemma~\ref{phase transition} $a^*\ne -a^{\min}$, and therefore by the strict monotonicity of the modified zeroth Bessel function $I_0$ we obtain that
$$
\rho^{\min}(t)\ne \rho^*(t)\qquad\mbox{for any $t>0$.}
$$
\end{corollary}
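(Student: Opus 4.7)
The proof will proceed by separately identifying the two iterated limits using Theorems~\ref{thm:variabledata} and~\ref{thm: N then eps}, and then comparing the resulting effective diffusivities via an explicit one-dimensional computation. My first step is to extract the chaotic initial datum: since $\rho_0^{\eps,N}$ is a tensor product and its single-particle marginal $\rho_0^\eps(x) = \eps^{-1}\rho_0^*(\eps^{-1}x)$ concentrates weakly to $\delta_0$ as $\eps\to 0$ (rescaling of a probability measure to a shrinking interval), the chaotic limit of the initial data in either order is $X_0 = \delta_{\delta_0} \in \cP(\cP(\R))$. The crucial observation is that a change of variables shows that the periodic rearrangement at scale $\eps$ of $\rho_0^\eps$ coincides with $\tilde{\nu}^*$ (the periodic rearrangement at scale $1$ of $\rho_0^*$) for every $\eps > 0$, so that the initial datum for the $\eps=1$ periodic McKean--Vlasov equation~\eqref{periodiceps=1} associated to $\rho_0^\eps$ is exactly the non-minimising steady state of Lemma~\ref{phase transition}.

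For the order $N\to\infty$ first, the preceding observation makes assumption~\eqref{exponential convergence} trivially valid: since $\tilde{\nu}^*$ is a steady state of~\eqref{periodiceps=1}, the solution $\tilde{\nu}^\eps(t)$ starting from $\tilde{\nu}^*$ is constant in time, and in particular $d_2(\tilde{\nu}^\eps(t),\tilde{\nu}^*) = 0$ for all $t$. Theorem~\ref{thm:variabledata} then yields $\lim_{\eps\to0}\lim_{N\to\infty}\rho^{\eps,N}(t) = S_t^*\#\delta_{\delta_0} = \delta_{\rho^*(t)}$, where $\rho^*(t)$ is the fundamental solution of the heat equation on $\R$ with covariance $A_*^{\eff}$ built from $\tilde{\nu}^*$ via~\eqref{covariance matrix}--\eqref{limiting corrector}. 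For the opposite order, Lemma~\ref{phase transition} ensures that $\tilde{E}_{MF}$ admits the unique minimiser $\tilde{\nu}^{\min}$, so Theorem~\ref{thm: N then eps} applies and gives $\lim_{N\to\infty}\lim_{\eps\to0}\rho^{\eps,N}(t) = S_t^{\min}\#\delta_{\delta_0} = \delta_{\rho^{\min}(t)}$, where $\rho^{\min}(t)$ is the heat kernel with covariance $A_{\min}^{\eff}$ built from $\tilde{\nu}^{\min}$ via~\eqref{covariance matrix bis}--\eqref{min corrector}.

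It remains to compute and compare the two effective diffusivities. In one dimension the corrector equation $\partial_y(\tilde{\nu}\,\partial_y\Psi) = -\partial_y\tilde{\nu}$ can be integrated explicitly; imposing periodicity of $\Psi$ yields $\partial_y\Psi = -1 + C/\tilde{\nu}$ with $C = \bigl(\int_0^1\tilde{\nu}^{-1}\,dy\bigr)^{-1}$, from which $A^{\eff} = \beta^{-1}\bigl(\int_0^1\tilde{\nu}^{-1}\,dy\bigr)^{-1}$. Substituting the Bessel-parameterised form $\tilde{\nu}(y) = Z^{-1}e^{a\cos(2\pi y)}$ with $Z = I_0(a)$ and using the evenness of $I_0$, one obtains $\int_0^1\tilde{\nu}^{-1}\,dy = I_0(a)I_0(-a) = I_0(a)^2$, hence $A_{\min}^{\eff} = 1/\bigl(\beta I_0(a^{\min})^2\bigr)$ and $A_*^{\eff} = 1/\bigl(\beta I_0(-a^*)^2\bigr)$, which produce the explicit Gaussian densities~\eqref{lim1} and~\eqref{lim2}. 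Finally, Lemma~\ref{phase transition} asserts $a^*\neq -a^{\min}$ with $a^{\min},\,-a^*>0$, and since $I_0$ is strictly increasing on $[0,\infty)$ (its power series has nonnegative coefficients, strictly positive at second order) we conclude $I_0(a^{\min})\neq I_0(-a^*)$, so $A_{\min}^{\eff}\neq A_*^{\eff}$ and the two Gaussians differ for every $t>0$. The only conceptually nontrivial step is the rearrangement identity $\tilde{\nu}_0^\eps = \tilde{\nu}^*$: this is precisely what makes Theorem~\ref{thm:variabledata} select the effective dynamics associated with the non-minimising critical point $\tilde{\nu}^*$ rather than with the minimiser $\tilde{\nu}^{\min}$, and it is this choice of initial data that breaks the commutativity of the two limits above the phase transition.
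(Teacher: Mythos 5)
Your proposal matches the paper's proof in essentially every respect: both identify $X_0=\delta_{\delta_0}$, invoke Theorem~\ref{thm:variabledata} via the observation that the periodic rearrangement of the initial data is the steady state $\tilde{\nu}^*$ (so assumption~\eqref{exponential convergence} holds trivially) for the $N\to\infty$-then-$\eps\to0$ order, invoke Theorem~\ref{thm: N then eps} via the uniqueness of the minimiser $\tilde{\nu}^{\min}$ from Lemma~\ref{phase transition} for the other order, and then compare the two diffusivities through the explicit one-dimensional corrector formula $A^{\eff}=\beta^{-1}\bigl(\int_{\T}\tilde{\nu}^{-1}\bigr)^{-1}=\beta^{-1}/I_0(a)^2$ together with the strict monotonicity of $I_0$. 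The only cosmetic difference is that you integrate the corrector equation from scratch, whereas the paper cites \cite[Equation (13.6.13)]{pavliotis2008multiscale} for that step.
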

\begin{proof}
We first note that
\begin{align}
\lim_{N \to \infty} \lim_{\eps \to 0} \rho^{\eps,N}_0 =  \lim_{\eps \to 0} \lim_{N \to \infty} \rho^{\eps,N}_0 = \delta_{\delta_0}=: X_0 \in \cP(\cP(\R)) \, . 
\end{align}
For the limit $\eps \to 0$ followed by $N \to \infty$, we use that by Proposition~\ref{phase transition} $\tilde{\nu}^{\min}$ is the unique minimiser of $\tilde{E}_{MF}$, hence we can apply Theorem~\ref{thm: N then eps} to obtain that
$$
\lim_{N\to\infty}\lim_{\eps\to 0} \rho^{\eps,N}(t)= \delta_{\rho^{\min}(t)}
$$
with $\rho^{\min}$ satisfying
$$
\begin{cases}
\partial_t \rho^{\min}=\pa_x(A^{\eff}_{\min}\pa_x \rho^{\min})\\
\rho^{\min}(0)=\delta_0,
\end{cases}
$$
where
$$
A^{\eff}_{\min}=\frac{\beta^{-1}}{Z\hat{Z}}=\frac{\beta^{-1}}{\int_{\T}e^{a^{\min}\cos(2\pi y)}\;dy\int_{\T}e^{-a^{\min}\cos(2\pi y)}\;dy}=\frac{\beta^{-1}}{I_0(a^{\min})^2}.
$$
To obtain this formula, we have used that in 1-D we can solve the corrector problem~\eqref{min corrector} explicitly, see for instance \cite[Equation (13.6.13)]{pavliotis2008multiscale}. The explicit expression for $\rho^{\min}(t)$ now follows.

Now we turn to the other limit. As discussed in Section~\ref{gradientflow}, passing to the limit $N \to \infty$, we obtain that for a fixed $t>0$
\begin{align}
\lim_{N \to \infty} \rho^{\eps,N}(t)= X^\eps(t)= S_t^\eps \# X_0^\eps \, ,
\end{align}
with $S_t^\eps$ the solution semigroup of~\eqref{nonlineareq} and $X_0^\eps= \delta_{\eps^{-1}\rho_0^*(\eps^{-1}x)}$. Using~\eqref{dataquotient}, we have  that the initial data for the $\eps=1$ periodic mean field equation~\eqref{periodiceps=1} is
given by
\begin{align}
\tilde{\nu}^\eps_0= \sum_{k\in\Z^d} \rho_0^* (x+k)= \tilde{\nu}^* \, .
\end{align}
We know from Proposition~\ref{phase transition} that $\tilde{\nu}^*$ is steady state of~\eqref{periodiceps=1}, thus the hypothesis \eqref{exponential convergence} is trivially satisfied. Therefore, we can pass to the limit as $\eps \to 0$ using Theorem~\ref{thm:variabledata} and obtain for a fixed $t>0$
\begin{align}
\lim_{\eps \to 0} X^\eps(t) = \delta_{\rho^*(t)} \, ,
\end{align}
with $\rho^{*}$ satisfying
$$
\begin{cases}
\partial_t \rho^{*}=\pa_x(A^{\eff}_{*}\pa_x \rho^{*})\\
\rho^{*}(0)=\delta_0,
\end{cases}
$$
where
$$
A^{\eff}_{*}=\frac{\beta^{-1}}{Z\hat{Z}}=\frac{\beta^{-1}}{\int_{\T}e^{a^{*}\cos(2\pi y)}\;dy\int_{\T}e^{-a^{*}\cos(2\pi y)}\;dy}=\frac{\beta^{-1}}{I_0(-a^{*})^2},
$$
thus proving~\eqref{lim2} and completing the proof of the result.
\end{proof}

\begin{figure}[ht]
\centering
    \includegraphics[scale=0.5]{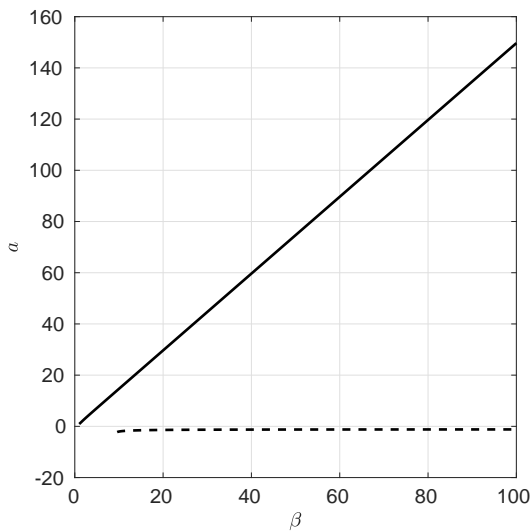}

\caption{The two solutions of Equation~\eqref{simplified}, i.e. $a^{\min}$ (the solid line) and $a^*$ (the dotted line) for $\eta=0.5$. The figure demonstrates how stark the difference between the two solutions and hence between the two effective solutions, $\rho^{\min}$ and $\rho^*$ of Corollary~\ref{cor:example2}, is.}
\label{fig:adiff}
\end{figure}

\begin{remark}
The result of Corollary~\ref{cor:example2} can be generalized to other rapidly varying initial data that is exponentially attracted to $\tilde{\nu}^{*}$.
\end{remark}
\begin{remark}
A simple choice of initial data which satisfies~\eqref{dataquotient} is $\rho_0^*= \chi_{[0,1]}\tilde{\nu}^*$, with $\chi_A$
the indicator function of the set $A$.
\end{remark}

\vspace{0.2cm}

\subsection{Application of the fluctuation theorem}\label{ftheorem}

In this subsection will assume without proof that we have a characterisation of the fluctuations around the mean field limit, in the spirit of~\cite{dawson1983critical,fernandez1997hilbertian}, as the solution to a linear SPDE  and use this together the energy minimisation property of the Gibbs measure to obtain a rate of convergence in relative entropy of the Gibbs measure  to the minimiser of the periodic mean field energy \eqref{periodiceps=1}. We also characterise the asymptotic behaviour of the partition function. At the end of the subsection we present a provisional result in which we show that this rate of convergence does hold at high temperatures without using the central limit theorem (the characterisation of fluctuations) but instead conditional on a certain rate of convergence in a weaker topology (cf.~\eqref{propchaos}).

We start by restating the classical result by Messer--Spohn~\cite{messer1982statistical} (cf. Lemma~\ref{lem:gammaperiodic}). We consider the unique minimiser of $\tilde{E}^N:\mathcal{P}_{\sym}((\T^d)^N)\to (-\infty,+\infty]$
$$
\tilde{E}^N[\tilde{\nu}^N]=\frac{1}{N}\left(\beta^{-1}\int_{(\T^d)^N}\tilde{\nu}^N(x)\log\tilde{\nu}^N(x)\;\dx{x}+\int_{(\T^d)^N}\frac{1}{2N}\sum_{i=1}^N\sum_{\substack{j=1\\j\ne i}}^N W(x_i-x_j)+\sum_{i=1}^N V(x_i)\;\dx{\tilde{\nu}^N}(x) \right)
$$
which is given by the Gibbs measure
$$
M_N(x)=\frac{e^{-\beta\left(\frac{1}{2N}\sum_{i=1}^N\sum_{j\ne i}^N W(x_i-x_j)+\sum_{i=1}^N V(x_i)\right)}}{Z_N},
$$
with the partition function
$$
Z_N=\int_{(\mathbb{T}^d)^N}e^{-\beta\left(\frac{1}{2N}\sum_{i=1}^N\sum_{j\ne i}^N W(y_i-y_j)+\sum_{i=1}^N V(y_i)\right)}\;\dx{y}.
$$
Then any accumulation point $X^\infty\in \mathcal{P}(\mathcal{P}(\T^d))$ of the sequence of minimisers $M_N$ is a minimiser of 
\begin{equation}
    \tilde{E}^\infty[X]=\int_{\mathcal{P}(\mathcal{P}(\T^d))}\tilde{E}_{MF}[\tilde{\nu}]\;\dx{X}(\tilde{\nu})
\end{equation}
with $\tilde{E}_{MF}:\mathcal{P}(\T^d)\to (-\infty,+\infty]$ given by
\begin{equation}\label{eq:MFenergy}
    \tilde{E}_{MF}[\tilde{\nu}]=\beta^{-1} \int_{\T^d} \tilde{\nu}\log(\tilde{\nu})\;\dx{x}+\frac{1}{2}\int_{\T^d}\int_{\T^d}W(x-y)\tilde{\nu} (x)\tilde{\nu}(y)\;\dx{x}\dx{y}+\int_{\T^d} V(x)\tilde{\nu}(x)\;\dx{x},
\end{equation}
which implies that
$$
\textrm{supp}\;X^\infty \subset \{\tilde{\nu}\in \mathcal{P}(\T^d)\;:\; \tilde{E}_{MF}[\tilde{\nu}]=\inf \tilde{E}_{MF} \}
$$
In particular, if we are below the phase transition $\beta<\beta_c$, we have, by Definition~\ref{pt} and Proposition~\ref{tfae}, that $\tilde{E}_{MF}$ admits a unique minimiser, which we denote by $\tilde{\nu}^{\min}\in\mathcal{P}(\T^d)$ and thus $X^\infty=\delta_{\tilde{\nu}^{\min}}\in\mathcal{P}(\mathcal{P}(\T^d))$. In the subsequent calculations, we will use $(M_N)_n$ to refer to the $n$\textsuperscript{th} marginal of the $N$-particle Gibbs measure $M_N$.

A natural next step is to consider the next order of convergence:
\begin{align}
\ds N(\tilde{E}^N(M_N)-\ds\tilde{E}_{MF}(\tilde{\nu}^{\min})) &= \beta^{-1}\int_{\T^{Nd}} M_N\log M_N \;\dx{x}-\beta^{-1}\int_{\T^d} (\tilde{\nu}^{\min})^{\otimes N}\log (\tilde{\nu}^{\min})^{\otimes N} \;\dx{x}\\
\ds\qquad&+\frac{N-1}{2}\int_{\T^{2d}}W(x-y)(M_N)_2\;\dx{x}\dx{y}+N\int_{\T^d}V(x)(M_N)_1\;\dx{x}\\
\ds\qquad& -\frac{N}{2}\int_{\T^{2d}}W(x-y)\tilde{\nu}^{\min}(x)\tilde{\nu}^{\min}(y)\;\dx{x}\dx{y}-N\int_{\T^d}V(x)\tilde{\nu}^{\min}\;\dx{x}. \label{eq:Nenergy}
\end{align}
The idea is to massage the previous expression to obtain something we can control with the fluctuations. To do this we first need to use the empirical measure $\hat{M}_N\in\mathcal{P}(\mathcal{P}(\T^d))$ associated to $M_N\in\mathcal{P}_{\sym}((\T^d)^N)$, as defined in Definition~\ref{def:empirical}.
We can compare the second marginal $(M_N)_2$ of $M_N$ with the products of the empirical measure. We notice that for any test function $\varphi\in C^\infty (\T^{2d})$, we have
\begin{equation}\label{eq:empirical}
\left(1-\frac{1}{N}\right)\int_{\T^{2d}}\varphi(x,y)(M_N)_2\;\dx{x}\dx{y}+\frac{1}{N}\int_{\T^d}\varphi(x,x)(M_N)_1\;\dx{x}= \mathbb{E} \left\langle \varphi, \left(\frac{1}{N}\sum_{i=1}^N\delta_{x_i}\right)^{\otimes 2}\right\rangle,
\end{equation}
where the expectation is taken with respect to the law $\hat{M}_N$ (for more details on these type identities for higher order marginals see \cite{diaconis1980finite}). We know from Proposition~\ref{tfae} that the minimiser of the mean field energy must satisfy the following condition
\begin{equation}\label{eq:firstorder}
\beta^{-1}\log \tilde{\nu}^{\min}=-W\ast\tilde{\nu}^{\min}-V+C.
\end{equation}
Putting \eqref{eq:Nenergy}, \eqref{eq:empirical}, and \eqref{eq:firstorder}  together, adding and subtracting
$$
\beta^{-1}\int_{\T^d}M_N \log \tilde{\nu}^{\min}\;\dx{x} \, ,
$$
and completing the square, we obtain 
\begin{equation}\label{eq:boundfrombelow}
 N(\tilde{E}^N(M_N)-\tilde{E}_{MF}(\tilde{\nu}^{\min}))=\ds \beta^{-1}\mathcal{H}(M_N|(\tilde{\nu}^{\min})^{\otimes N}) -\frac{1}{2}\mathbb{E}\left\langle W(x-y), (\mathcal{G}^N)^{\otimes 2}\right\rangle -\frac{W(0)}{2}  
\end{equation}
where $\mathcal{H}(\cdot|\cdot)$ denotes the relative entropy or Kullback--Leibler divergence and
$$
\mathcal{G}^N:= \sqrt{N}\left(\frac{1}{N} \sum_{i=1}^N\delta_{x_i}-\tilde{\nu}^{\min}\right)
$$
is a Radon measure-valued random variable defined on the probability space $((\T^d)^N,M_N)$. We refer to $\mathcal{G}^N$ as the fluctuations around the mean field limit. Using the fact that
\begin{equation}\label{eq:minimizing}
\begin{array}{rl}
\ds N\tilde{E}_{MF}(\tilde{\nu}^{\min})&\ds=N\tilde{E}^N((\tilde{\nu}^{\min})^{\otimes N} )-\frac{1}{2}\int_{\T^{2d}}W(x-y)\tilde{\nu}^{\min}(x)\tilde{\nu}^{\min}(y)\;\dx{x}\dx{y}\\
&\ds\ge N \tilde{E}^N(M_N )     -\frac{1}{2}\int_{\T^{2d}}W(x-y)\tilde{\nu}^{\min}(x)\tilde{\nu}^{\min}(y)\;\dx{x}\dx{y}
\end{array}
\end{equation}
we obtain the bound
\begin{equation}\label{eq:boundfromabove}
0\le \mathcal{H}(M_N|(\tilde{\nu}^{\min})^{\otimes N})\le \frac{W(0)}{2}+ \frac{1}{2}\mathbb{E}\left\langle W(x-y), (\mathcal{G}^N)^{\otimes 2}\right\rangle+\frac{1}{2}\int_{\T^{2d}}W(x-y)\tilde{\nu}^{\min}(x)\tilde{\nu}^{\min}(y)\;\dx{x}\dx{y}.    
\end{equation}
In a similar way, we can also obtain the bound
\begin{equation}\label{eq:partitionfunction}
\begin{array}{rl}
\ds-\frac{1}{2}\int_{\T^{2d}}W(x-y)\tilde{\nu}^{\min}(x)\tilde{\nu}^{\min}(y)\;\dx{x}\dx{y}&\ds\ge N\log\left(\frac{Z_N}{Z_{\infty}}\right)=N\left(\tilde{E}^N[M_N]- \tilde{E}_{MF}[\tilde{\nu}^{\min}]\right)\\
&\ds\ge-\frac{W(0)}{2} -\frac{1}{2}\mathbb{E}\left\langle W(x-y), (\mathcal{G}^N)^{\otimes 2}\right\rangle,
\end{array}
\end{equation}
where we have used the positivity of the relative entropy. Therefore, to obtain useful information from \eqref{eq:boundfromabove} and \eqref{eq:partitionfunction}, we need to show that
\begin{equation}
\limsup_{N\to\infty}\mathbb{E}\left\langle W(x-y), (\mathcal{G}^N)^{\otimes 2}\right\rangle<\infty.    
\end{equation}

To simplify the discussion and obtain sharp bounds all the way up to the phase transition, we consider the specific example of $d=1$, $V=0$ and $W=-\cos(2\pi x)$, which undergoes a phase transition at $\beta_c=2$ (cf. Proposition~\ref{XY}). We now make our main assumption that we have an equilibrium version of the central limit theorem before the phase transition., i.e. $\mathcal{G}^N$ converges in law to $\mathcal{G}^\infty$ whose law is the unique invariant measure of the following linear stochastic PDE
\begin{equation}\label{eq:SPDE}
\dot{\mathcal{G}}^\infty=\beta^{-1}\partial_{xx} \mathcal{G}^\infty+(2\pi)^2 \cos(2\pi x)*\mathcal{G}^\infty+\sqrt{2\beta^{-1}}\xi,    
\end{equation}
where we have used that $\tilde{\nu}^{\min}=d\mathcal{L}$ and that $W=-\cos(2\pi x)$ has zero average  to simplify the linearisation of the nonlinear PDE \eqref{nonlineareq} and $\xi$ is the space and time derivative of the cylindrical Wiener process. More specifically, if we consider $\{e_k\}_{k\in\Z}$ the standard orthonormal Fourier basis of $L^2(\T)$ given by
$$
e_k(x)=\begin{cases}
\sqrt{2}\sin(2\pi k x)&k>0\\
1&k=0\\
\sqrt{2}\cos(2\pi k x)& k<0,
\end{cases}
$$
then we can express
$$
\xi(x,t)=\sum_{k\in \Z} 2 \pi k e_k(x) \dot{B}_k(t) 
$$
where $\{\dot{B}_k\}_{k\in \Z}$ is a countable family of independent $\T$-valued Wiener processes. In particular, we can decompose \eqref{eq:SPDE} by projecting it onto each mode to obtain a family of uncoupled SDEs given by
$$
\langle \dot{\mathcal{G}}^\infty,e_k\rangle=\begin{cases}
-\beta^{-1}(2\pi)^2\langle\mathcal{G}^\infty,e_k\rangle+\frac{1}{2}(2\pi)^2 \langle\mathcal{G}^\infty,e_{k}\rangle+\sqrt{2\beta^{-1}}2\pi k \dot{B}_{k}&|k|=1\\
-\beta^{-1}(2\pi k)^2\langle\mathcal{G}^\infty,e_k\rangle+\sqrt{2\beta^{-1}}2\pi k\dot{B}_{k}&|k|\ne 1,
\end{cases}
$$
where we have used the trigonometric identity
\begin{equation}\label{eq:trigidentity}
\cos(2\pi x)*\mathcal{G}^\infty= \frac{1}{2}\left(e_1 \langle \mathcal{G}^\infty,e_1\rangle +e_{-1} \langle \mathcal{G}^\infty,e_{-1}\rangle\right).    
\end{equation}
In particular, we can find the invariant measure explicitly for each mode
\begin{equation}\label{eq:law}
\mathrm{Law}(\langle\mathcal{G}^\infty,e_k\rangle)=\begin{cases}
\mathcal{N}\left(0,\frac{2}{2-\beta}\right)&|k|=1\\
\mathcal{N}(0,1)&|k|\ne 1,
\end{cases}
\end{equation}
where $\mathcal{N}$ is the normal distribution. From \eqref{eq:law} we can clearly identify the phase transition $\beta_c=2$ when the SPDE \eqref{eq:SPDE} does no longer support an invariant measure.

Taking limits in \eqref{eq:boundfromabove} and using that $W(0)=-1$ we obtain that for this specific system we have the bound
$$
\limsup_{N\to\infty}\mathcal{H}(M_N|(\tilde{\nu}^{\min})^{\otimes N})\le  -\frac{W(0)}{2}+\mathbb{E}\frac{|\langle e_1, \mathcal{G}^\infty\rangle|^2+|\langle e_{-1}, \mathcal{G}^\infty\rangle|^2}{4}=\frac{\beta}{2(2-\beta)} ,
$$
where we have used the trigonometric identity \eqref{eq:trigidentity} and the law \eqref{eq:law} of the projections of $\mathcal{G}^\infty$. Decomposing the $M_N$ into its marginals, we can use the subadditivity of the relative entropy to conclude that
$$
\mathcal{H}((M_N)_n,(\tilde{\nu}^{\min})^n)\le\left\lfloor \frac{n}{N}\right\rfloor \frac{\beta}{2(2-\beta)},
$$
where $\left\lfloor n/N\right\rfloor $ is the largest integer less than $n/N$. We note that this estimate holds all the way up to the phase transition for this system $\beta_c=2$. Similarly, using \eqref{eq:partitionfunction} we obtain that for every $\delta>0$, we have the estimate
$$
1\ge \frac{Z_N}{Z_{\min}}\ge e^{-\frac{1}{N}\left(\frac{\beta}{2(2-\beta)}+\delta\right)}\stackrel{N\to\infty}{\to} 1
$$
for $N$ large enough. To conclude this subsection, we rewrite these bounds into a general provisional theorem (cf. Remark~\ref{provisional}).
\begin{theorem}
Consider $\tilde{\nu}^{\min}$, the unique minimiser of the periodic mean field energy $\tilde{E}_{MF}$~\eqref{eq:MFenergy} and $Z_{\min}$, its associated partition function. Assume that there exists a constant $C_1>0$, such that $\norm{W}_{C^2(\T^d)}\le C_1$ and that for $N$ large enough we have the estimate
\begin{equation}\label{propchaos}
\mathfrak{D}_2^2(\hat{M}_N,\delta_{\nu^{\min}})\le \frac{C_1}{N} \,.
\end{equation}
Then, there exists $C>0$ such that the following estimates hold
$$
\mathcal{H}\left((M_N)_n,(\tilde{\nu}^{\min})^{\otimes n}\right)\le C \left\lfloor \frac{n}{N}\right\rfloor 
$$
and
$$
\frac{Z_N}{Z_{\min}}\ge e^{-\frac{C}{N}}.
$$
\end{theorem}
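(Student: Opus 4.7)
The plan is to plug the hypothesis \eqref{propchaos} directly into the two identities \eqref{eq:boundfromabove} and \eqref{eq:partitionfunction} already derived in the paper. The bound \eqref{propchaos} serves as a quantitative surrogate for the (unproved) central limit theorem: it is exactly what is needed to control the only non-explicit term $\mathbb{E}\skp*{W(x-y),(\mathcal{G}^N)^{\otimes 2}}$ appearing on the right-hand side of both identities.

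The main step is the following deterministic estimate: for every $\mu\in\mathcal{P}(\T^d)$,
$$\abs*{\iint_{\T^{2d}} W(x-y)\,d(\mu-\tilde{\nu}^{\min})(x)\,d(\mu-\tilde{\nu}^{\min})(y)}\le C_1\,d_2^2(\mu,\tilde{\nu}^{\min}).$$
To prove it, I would rewrite the left-hand side as $\int f\,d(\mu-\tilde{\nu}^{\min})$ with $f=W*(\mu-\tilde{\nu}^{\min})$ and observe that $\nabla f=(\nabla W)*(\mu-\tilde{\nu}^{\min})$; Kantorovich--Rubinstein duality then gives $\norm*{\nabla f}_\infty\le \norm*{\nabla^2 W}_\infty d_1(\mu,\tilde{\nu}^{\min})\le C_1 d_2(\mu,\tilde{\nu}^{\min})$, after which a second application of the duality to the now-known-Lipschitz function $f$ yields the claim.

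Applying this estimate with $\mu$ equal to the random empirical measure $\hat{\rho}^N=\frac{1}{N}\sum_i\delta_{x_i}$, taking expectations against $M_N$, and using both $\mathcal{G}^N=\sqrt{N}(\hat{\rho}^N-\tilde{\nu}^{\min})$ and the identity $\mathfrak{D}_2^2(\hat{M}_N,\delta_{\tilde{\nu}^{\min}})=\mathbb{E}\,d_2^2(\hat{\rho}^N,\tilde{\nu}^{\min})$ (which holds because the only coupling to a Dirac mass is trivial), the hypothesis \eqref{propchaos} yields
$$\mathbb{E}\,\abs*{\skp*{W(x-y),(\mathcal{G}^N)^{\otimes 2}}}\le N\,C_1\,\mathfrak{D}_2^2(\hat{M}_N,\delta_{\tilde{\nu}^{\min}})\le C_1^2.$$
Substituting this into \eqref{eq:boundfromabove} produces a uniform-in-$N$ bound $\mathcal{H}(M_N\,|\,(\tilde{\nu}^{\min})^{\otimes N})\le C$ with $C$ depending only on $C_1$, and substituting into \eqref{eq:partitionfunction} and exponentiating produces $Z_N/Z_{\min}\ge e^{-C/N}$. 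The marginal estimate then follows from the standard subadditivity of relative entropy for permutation-symmetric laws, namely $\lfloor N/n\rfloor\,\mathcal{H}((M_N)_n\,|\,(\tilde{\nu}^{\min})^{\otimes n})\le \mathcal{H}(M_N\,|\,(\tilde{\nu}^{\min})^{\otimes N})\le C$, which yields the asserted bound.

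The only non-routine ingredient is the Wasserstein estimate of the second paragraph. Its key feature is that the $C^2$ regularity of $W$ converts the signed-measure bilinear form into a genuine quadratic in $d_2$; this is precisely what allows the $\sqrt{N}$ normalisation inside $\mathcal{G}^N$ to be absorbed by the $N^{-1}$ rate assumed in \eqref{propchaos}. In particular, no passage to a limiting fluctuation field, and no use of the SPDE \eqref{eq:SPDE}, is required for the result.
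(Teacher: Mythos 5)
Your proof is correct and follows essentially the same route as the paper: both reduce to showing $\mathbb{E}\skp*{W(x-y),(\mathcal{G}^N)^{\otimes 2}}$ is uniformly bounded, and both do so by the same two-step Kantorovich--Rubinstein argument (once for $\nabla f$, once for $f$, with $f=W*(\mu-\tilde{\nu}^{\min})$), giving $\abs*{\mathbb{E}\skp*{W,(\mathcal{G}^N)^{\otimes 2}}}\le N\norm{D^2W}_{\Leb^\infty}\,\mathfrak{D}_2^2(\hat{M}_N,\delta_{\tilde{\nu}^{\min}})$ and then invoking \eqref{propchaos}. The only difference is that you make the intermediate deterministic Wasserstein estimate explicit, whereas the paper compresses it into one line; you also write the subadditivity step in the form $\lfloor N/n\rfloor\,\mathcal{H}((M_N)_n\,|\,(\tilde{\nu}^{\min})^{\otimes n})\le \mathcal{H}(M_N\,|\,(\tilde{\nu}^{\min})^{\otimes N})$, which in fact corrects what looks like a typographical slip in the paper's stated bound (the paper writes $\lfloor n/N\rfloor$ where the actual conclusion is $\mathcal{H}((M_N)_n\,|\,(\tilde{\nu}^{\min})^{\otimes n})\le C/\lfloor N/n\rfloor$).
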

\begin{remark}\label{provisional}
The bound \eqref{propchaos} has been shown in the case of convex potentials in \cite{malrieu2003convergence}. In the large temperature regime $\beta\gg 1$, the slightly weaker bound
$$
\mathfrak{D}_1^2(\hat{M}_N,\delta_{\nu^{\min}})\le \frac{C_1}{N},
$$
with $\mathfrak{D}_1$ the 1-Wasserstein distance, can be found in \cite{durmus2018elementary} by employing the coupling technique outlined in Appendix~\ref{ap:coupling}; trying to upgrade this bound to \eqref{propchaos} is an interesting open question. Note that if the formal central limit theorem discussed at the start of the subsection (in the spirit of~\cite{fernandez1997hilbertian}) could be proved rigorously then~\eqref{propchaos} would hold. For the case $\beta=\beta_c$, we can not expect \eqref{propchaos} to hold (cf. \cite{dawson1983critical}).
\end{remark}
\begin{proof}
By \eqref{eq:boundfromabove} and \eqref{partitionfunction}, we need to show that there exists a $C$ depending on $\beta$, $V$, and $W$ such that
$$
\limsup_{N\to\infty} \mathbb{E}\langle W(x-y),(\mathcal{G}^N)^{\otimes 2}\rangle\le C.  
$$
By using the dual formulation of the Wasserstein distance and using the definition of $\mathcal{G}^N$, we obtain the following estimate:
\begin{align}
|\mathbb{E}\langle W(x-y),(\mathcal{G}^N)^{\otimes 2}\rangle| & \leq N\|D^2W\|_{\Leb^\infty(\T^d)}\mathbb{E} \left(d^2_2\left(\frac{1}{N}\sum_{i=1}^N\delta_{x_i},\tilde{\nu}^{\min}\right)\right)\\&=N\|D^2W\|_{\Leb^\infty(\T^d)}\mathfrak{D}_2^2(\hat{M}_N,\delta_{\tilde{\nu}^{\min}}),
\end{align}
where the expectation is taken with respect to the empirical measure $\hat{M}_N$ and $\mathfrak{D}_2$ is the 2-Wasserstein distance on the metric space $(\mathcal{P}(\T^d),d_2)$. The result now follows by applying hypothesis \eqref{propchaos}. 

\end{proof}




\section{Proof of Theorem~\ref{thm:variabledata}}\label{sec:1thm}

We start the proof of Theorem~\ref{thm:variabledata} with some basic elliptic estimates on a time-dependent corrector problem.
\begin{lemma}\label{lem:exreg}
Consider the following elliptic equations
\begin{align}
\nabla \cdot \bra*{\tilde{\mu}^\eps \nabla \chi}= - \nabla \tilde{\mu}^\eps \qquad \mbox{on $\T^d\times[0,\infty)$},
\label{eq:tdcorrector}
\end{align}
where 
\begin{align}
\tilde{\mu}^\eps(x,t)=Z^{-1} \exp\bra*{-\beta( W \ast \tilde{\nu}^\eps +V)}\, , \qquad Z=\intt{\exp\bra*{-\beta( W \ast \tilde{\nu}^\eps +V)}} \, ,
\end{align}
where $\tilde{\nu}^\eps(x,t)$ is a solution to the evolution \eqref{periodiceps=1} with initial data $\tilde{\nu}_0^\eps$. Then, there exists a unique (up to an additive constant) smooth solution $\chi:\T^d \times[0,\infty) \to \R^d$, $\chi_i \in \SobH^1(\T^d)$ to~\eqref{eq:tdcorrector}. Additionally, it satisfies the following estimates
\begin{align}
\norm{\chi_i}_{\SobH^m (\T^d)} &\leq C_1  \label{eq:Hmbound}  \\
\norm{\partial_t \chi_i(t)}_{\SobH^m(\T^d)} &\leq \sum_{k=1}^m c_k \|\partial_t\tilde{\nu}^\eps\|_{C^{-3}(\T^d)}^k(t) \label{eq:Hmboundt}
\end{align} 
for all $i = 1, \dots, d$ and $t>0$, where $C^{-3}(\T^d)$ is the dual of $C^3(\T^d)$, and the constants $C_1\;,c_k>0$ depend only on $m$, $d$, $\norm{V}_{C^m(\T^d)}$, and $\norm{W}_{C^m(\T^d)}$.
\end{lemma}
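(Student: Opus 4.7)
The plan is to view \eqref{eq:tdcorrector} as a family (parametrised by $t$) of linear elliptic equations on the torus and to extract the claimed estimates from classical elliptic regularity, with the bulk of the work consisting of tracking that all constants depend only on $\beta$, $m$, $d$, $\|V\|_{C^m}$, $\|W\|_{C^m}$ and not on $\tilde{\nu}^\eps$ itself. First, because $V,W$ are smooth and $\tilde{\nu}^\eps(\cdot,t)$ is a probability measure, Young-type estimates give $\|W\ast\tilde{\nu}^\eps\|_{C^k}\le\|W\|_{C^k}$ uniformly in $t,\eps$. Hence $\tilde{\mu}^\eps$ is smooth with uniform upper and lower bounds
$$
e^{-\beta(\|W\|_\infty+\|V\|_\infty)}\le \tilde{\mu}^\eps(x,t)\le e^{\beta(\|W\|_\infty+\|V\|_\infty)},
$$
and uniform $C^m$-norms. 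Consequently, the operator $L:=-\nabla\cdot(\tilde{\mu}^\eps\nabla\,\cdot\,)$ is uniformly elliptic on $\T^d$ with smooth, uniformly bounded coefficients. Since $-\nabla\tilde{\mu}^\eps$ is a zero-mean distributional gradient, Lax--Milgram on the mean-zero subspace of $\SobH^1(\T^d)$ produces a unique mean-zero weak solution $\chi_i$; a standard bootstrap on
$$
\|\chi_i\|_{H^m}\le C(\|\tilde{\mu}^\eps\|_{C^{m-1}},\mathrm{ellipticity})\bigl(\|\partial_i\tilde{\mu}^\eps\|_{H^{m-2}}+\|\chi_i\|_{L^2}\bigr)
$$
yields both smoothness and \eqref{eq:Hmbound}, with $\|\chi_i\|_{L^2}$ controlled by $\|\nabla\chi_i\|_{L^2}$ via the Poincaré inequality and the basic energy identity $\int\tilde{\mu}^\eps|\nabla\chi_i|^2=-\int\partial_i\tilde{\mu}^\eps\,\chi_i$.

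For \eqref{eq:Hmboundt}, I would differentiate \eqref{eq:tdcorrector} in time (justified by difference quotients, using that $\partial_t\tilde{\nu}^\eps$ is a bona fide distribution thanks to \eqref{periodiceps=1}) to obtain
$$
\nabla\cdot(\tilde{\mu}^\eps\nabla\partial_t\chi_i)=-\partial_i\partial_t\tilde{\mu}^\eps-\nabla\cdot(\partial_t\tilde{\mu}^\eps\,\nabla\chi_i),
$$
and then compute
$$
\partial_t\tilde{\mu}^\eps=-\beta\tilde{\mu}^\eps\Bigl(W\ast\partial_t\tilde{\nu}^\eps-\intt{\tilde{\mu}^\eps\,(W\ast\partial_t\tilde{\nu}^\eps)}\Bigr).
$$
Since $W$ is smooth, duality gives $\|W\ast\partial_t\tilde{\nu}^\eps\|_{C^k}\le\|W\|_{C^{k+3}}\|\partial_t\tilde{\nu}^\eps\|_{C^{-3}}$, so $\|\partial_t\tilde{\mu}^\eps\|_{C^{k}}\le C_k\|\partial_t\tilde{\nu}^\eps\|_{C^{-3}}$ uniformly in $t,\eps$. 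Applying elliptic regularity for the same operator $L$ and using the standard tame product estimate on the torus together with the already-established \eqref{eq:Hmbound}, one obtains
$$
\|\partial_t\chi_i\|_{H^m}\le C\bigl(\|\partial_t\tilde{\mu}^\eps\|_{H^{m-1}}+\|\partial_t\tilde{\mu}^\eps\|_{C^{m-1}}\|\chi_i\|_{H^m}\bigr)\le C_m\|\partial_t\tilde{\nu}^\eps\|_{C^{-3}}.
$$
An induction on $m$, where the estimate at level $m$ is fed into the commutator/product terms at level $m+1$, yields the polynomial form \eqref{eq:Hmboundt}; a linear term $k=1$ controls the bound at each individual level, and the remaining powers simply accommodate the worst-case accumulation of factors of $\|\chi_i\|_{H^m}$ that appear through repeated use of the product rule.

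The main technical obstacle, in my view, is precisely ensuring that no constant hidden behind the phrase ``elliptic regularity'' depends on $\tilde{\nu}^\eps$ or on its spatial regularity. Two ingredients make this work: the convolution structure (which converts the measure $\tilde{\nu}^\eps$ into a $C^\infty$-function with norm $\le\|W\|_{C^k}$), and the choice of the very weak norm $\|\cdot\|_{C^{-3}}$ on $\partial_t\tilde{\nu}^\eps$ (which allows us to absorb three derivatives into the smooth kernel $W$ and is consistent with $\partial_t\tilde{\nu}^\eps$ being a priori only a divergence of a Radon measure via \eqref{periodiceps=1}). Once this uniformity is in place, everything else is a routine bootstrap.
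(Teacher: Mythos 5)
Your proposal is correct and follows essentially the same route as the paper: uniform $C^m$-bounds on $\tilde{\mu}^\eps$ via convolution with the smooth kernel $W$, existence and uniqueness from Lax--Milgram / standard elliptic theory on the mean-zero subspace, $\SobH^m$-bounds by bootstrap, and the choice of $\norm{\cdot}_{C^{-3}(\T^d)}$ on $\partial_t\tilde{\nu}^\eps$ so that three derivatives are absorbed into $W$. One minor remark: your formula for $\partial_t\tilde{\mu}^\eps$ correctly carries the normalisation term $-\partial_t Z/Z$, which the paper's displayed computation~\eqref{eq:aux111} drops (harmlessly, since it is bounded by the same quantity), and your estimate, obtained by feeding the already-established uniform bound $\norm{\chi_i}_{\SobH^m}\le C_1$ into the right-hand side, in fact yields a bound linear in $\norm{\partial_t\tilde{\nu}^\eps}_{C^{-3}(\T^d)}$, which trivially implies the polynomial form \eqref{eq:Hmboundt}.
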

\begin{proof}
\item
\paragraph{\bf Existence and uniqueness}
We consider the equation component-wise for any $i= 1 \dots ,d$:
\begin{align}
\nabla \cdot \bra*{\tilde{\mu}^\eps \nabla \chi_i}= - \partial_{x_i} \tilde{\mu}^\eps \, .
\label{eq:tdcorrectori}
\end{align}
Note that $\tilde{\mu}^\eps$ is smooth and is bounded above and below uniformly in time: 
\begin{align}
e^{-2 \beta \bra*{\norm{W}_{\Leb^\infty(\T^d)}+ \norm{V}_{\Leb^\infty(\T^d)} }} \leq \mu^{\eps}(x,t) \leq e^{2 \beta \bra*{\norm{W}_{\Leb^\infty(\T^d)}+ \norm{V}_{\Leb^\infty(\T^d)} }} \, .
\label{eq:mubounds}
\end{align}
Thus, by standard elliptic theory, for each $t \geq 0$ and $i = 1, \dots, d$, there exists a unique smooth solution $\chi_i \in \SobH^1_0(\T^d)$ to~\eqref{eq:tdcorrectori}. We can check that $\chi_i$ is continuously differentiable in time, as $\xi_i=\partial_t \chi_i$ satisfies
\begin{align}
\nabla \cdot \bra*{\tilde{\mu}^\eps \nabla \xi_i} = - \partial_{x_i} \partial_t \tilde{\mu}^\eps -\nabla \cdot \bra*{\partial_t\tilde{\mu}^\eps \nabla \chi_i} \, .
\label{eq:tdcorrectorit}
\end{align} 
Similar arguments imply that there exists a unique smooth solution of the above equation $\xi_i \in \SobH^1_0(\T^d)$. 
\item
\paragraph{\bf Regularity} We note that is it is sufficient to prove the bounds~\eqref{eq:Hmbound} and~\eqref{eq:Hmboundt} in the weighted space $\SobH^m(\tilde{\mu}^\eps)$ since by~\eqref{eq:mubounds} these norms are equivalent to the flat space up to a time-independent multiplicative constant. We deal first with the regularity of~\eqref{eq:tdcorrectori}. Testing against
$\chi_i$ we obtain,
\begin{align}
\intt{\abs{\nabla\chi_i}^2 \tilde{\mu}^\eps}&= -\intt{\partial_{x_i} \chi_i \tilde{\mu}^\eps} \\
& \leq \norm{\partial_{x_i} \chi_i}_{\Leb^2(\tilde{\mu}^\eps)} \leq \norm{\nabla \chi_i}_{\Leb^2(\tilde{\mu}^\eps)} \, .
\end{align} 
It follows then that
\begin{align}
\norm{\chi_i}_{\SobH^1_0(\tilde{\mu}^\eps)} \leq 1 \,.
\label{eq:H1bound}
\end{align}
Now let $\alpha \in \N^d$ be any multi-index of order $m-1$ for some $m \geq 1$. Testing~\eqref{eq:tdcorrectori} against $\partial_{2 \alpha} \chi_i$, we obtain for the left hand side
\begin{align}
\intt{(\partial_{2 \alpha} \chi_{i}) \nabla \cdot \bra*{ {\tilde{\mu}^\eps \nabla \chi_{i}}}}&=
(-1)^{m}\intt{\bra*{\nabla \partial_{\alpha} \chi_{i}} \cdot \partial_{\alpha} (\tilde{\mu}^\eps \nabla \chi_{i})} \\
&= (-1)^{m}\intt{\bra*{\nabla \partial_{\alpha} \chi_{i}} \cdot  \sum_{\gamma\le \alpha}C_{\alpha, \gamma} \bra*{\partial_\gamma \tilde{\mu}^\eps} \bra*{\partial_{\alpha-\gamma} \nabla \chi_i}} \\
&= (-1)^{m}\intt{\abs*{\nabla \partial_\alpha \chi_i}^2 \tilde{\mu}^\eps} \\&+(-1)^{m}\intt{\bra*{\nabla \partial_{\alpha} \chi_{i}} \cdot  \sum_{\substack{\gamma\le \alpha\\
\gamma\ne 0}} C_{\alpha,k}\bra*{\partial_\gamma \tilde{\mu}^\eps} \bra*{\partial_{\alpha-\gamma} \nabla \chi_i}},
\end{align}
where the coefficients are given by
$$
C_{\alpha,\gamma}=\prod_{l=1}^d{\alpha_l \choose \gamma_l}.
$$

Similarly for the right hand side of \eqref{eq:tdcorrectori} we obtain
\begin{align}
-\intt{\bra*{\partial_{2 \alpha}\chi_i} \bra*{  \partial_{x_i} \tilde{\mu}^\eps}} = (-1)^{m-1} \intt{(\partial_{x_i} \partial_{\alpha} \chi_i ) \partial_\alpha \tilde{\mu}^\eps} \, .
\end{align}
Putting the previous two equations together and multiplying by $(-1)^{-m}$ we have
\begin{align}
\intt{\abs*{\nabla \partial_\alpha \chi_i}^2 \tilde{\mu}^\eps} = - \intt{(\partial_{x_i} \partial_{\alpha} \chi_i)  \partial_\alpha \tilde{\mu}^\eps} -\intt{\bra*{\nabla \partial_{\alpha} \chi_{i}} \cdot \sum_{\substack{\gamma\le \alpha\\
\gamma\ne 0}} C_{\alpha,\gamma}\bra*{\partial_\gamma \tilde{\mu}^\eps} \bra*{\partial_{\alpha-\gamma} \nabla \chi_i}} \, .
\label{eq:Hmestimate}
\end{align}
Using the exponential form of $\mu^\eps$, we note that for any multi-index $\alpha \in \N^d$ we have $\partial_\alpha \tilde{\mu}^\eps= f^\alpha \tilde{\mu}^\eps$, where $f^\alpha$ is a smooth function which is a linear combination of $\partial_\gamma W \ast \tilde{\nu}^\eps +\partial_\gamma V$ for $\gamma \leq \alpha$. This implies that we can obtain the bound
\begin{align}
\norm{f^\alpha}_{\Leb^\infty(\T^d)} \leq C_\alpha
\end{align}
where $C_\alpha$ depends only on $\norm{W}_{C^{m-1}(\T^d)} , \;\norm{V}_{C^{m-1}(\T^d)}$ .
 Applying H\"older's inequality and bounding in \eqref{eq:Hmestimate} we obtain
\begin{align}
\intt{\abs*{\nabla \partial_\alpha \chi_i}^2 \tilde{\mu}^\eps} &\leq \norm{\partial_{x_i} \partial_{\alpha} \chi_i}_{\Leb^2(\tilde{\mu}^\eps)} \norm{f^\alpha}_{\Leb^\infty(\T^d)} 
\\&+  \norm{\nabla \partial_{\alpha} \chi_{i}}_{\Leb^2(\tilde{\mu}^\eps)}\sum_{\substack{\gamma\le \alpha\\
\gamma\ne 0}} C_{\alpha,\gamma} \norm{f^\gamma}_{\Leb^\infty(\T^d)}  \norm{\partial_{\alpha-\gamma} \nabla \chi_i}_{\Leb^2(\tilde{\mu}^\eps)} 
\end{align}
Simplifying, we obtain,
\begin{align}
\norm{\nabla \partial_\alpha \chi_i}_{\Leb^2(\tilde{\mu}^\eps)} &\leq  \norm{f^\alpha}_{\Leb^\infty(\T^d)}
+ \sum_{\substack{\gamma\le \alpha\\
\gamma\ne 0}} C_{\alpha,\gamma} \norm{f^\gamma}_{\Leb^\infty(\T^d)}  \norm{\partial_{\alpha-\gamma} \nabla \chi_i}_{\Leb^2(\tilde{\mu}^\eps)} \\
& \leq C_\alpha + \sum_{\substack{\gamma\le \alpha\\
\gamma\ne 0}} C_{\alpha,\gamma} C_\gamma  \norm{  \chi_i}_{\SobH^{m-1}(\tilde{\mu}^\eps)}.
\end{align}
We can sum over all such $\alpha$ and recursively apply this bound along with~\eqref{eq:H1bound} to obtain~\eqref{eq:Hmbound}.
Note that the $\norm{\chi_i}_{\Leb^2(\T^d)}$ norm can be controlled by the Poincar\'e inequality since $\chi_i$ is mean zero. 

Before we turn to the regularity of~\eqref{eq:tdcorrectorit}, we derive the following estimates
\begin{align}\label{eq:aux111}
|\partial_t \tilde{\mu}^\eps| &= \beta  (W\ast \partial_t\tilde{\nu}^\eps)  \tilde{\mu}^\eps
 \leq  \beta \norm{W}_{C^3(\T^d)}\|\partial_t \tilde{\nu}^\eps\|_{C^{-3}(\T^d)} \tilde{\mu}^\eps \,,
\end{align}
where we denote by $C^{-3}(\T^d)$ the dual of $C^3(\T^d)$ and equip it with the norm $\norm{g}_{C^{-3}(\T^d)}=\sup\limits_{\norm{f}_{C^3(\T^d)}\leq 1 }\skp{f,g}$.  Similarly, for $\alpha \in \N^d$ the following estimate holds
\begin{align}
|\partial_{\alpha} \partial_t \tilde{\mu}^\eps| \leq \beta \norm{W}_{C^{3+|\alpha|}}\|\partial_t \tilde{\nu}^\eps\|_{C^{-3}(\T^d)} \tilde{\mu}^\eps
\end{align}

Next, we test~\eqref{eq:tdcorrectorit} against $\xi_i$ to obtain
\begin{align}
\intt{\abs{\nabla \xi_i}^2 \dx{\tilde{\mu}^\eps}} &= -\intt{\partial_{x_i} \xi_i \partial_t \tilde{\mu}^\eps} -\intt{\nabla \xi_i \cdot \nabla \chi_i \partial_t \tilde{\mu}^\eps} \\
& \leq \beta\norm{W}_{C^3(\T^d)} \norm{\nabla\xi_i}_{\Leb^2(\tilde{\mu}^\eps)}\|\partial_t \tilde{\nu}^\eps\|_{C^{-3}(\T^d)}(t) \\&+ \beta\norm{W}_{C^3(\T^d)} \norm{\nabla\xi_i}_{\Leb^2(\tilde{\mu}^\eps)} \norm{\nabla\chi_i}_{\Leb^2(\tilde{\mu}^\eps)} \|\partial_t \tilde{\nu}^\eps\|_{C^{-3}(\T^d)}(t) \, ,
\end{align}
where we have simply used~\eqref{eq:aux111} and applied the Cauchy--Schwartz inequality. It follows that
\begin{align}
\norm{\xi_i}_{\SobH^1(\tilde{\mu}^\eps)} \leq C \|\partial_t \tilde{\nu}^\eps\|_{C^{-3}(\T^d)}(t) \, ,
\end{align}
where the constant $C$ is independent of $t$ and depends on $\norm{\chi_i}_{\SobH^1(\tilde{\mu}^\eps)}$, $V$, and $W$. We omit the details but an essentially similar argument to the one used for~\eqref{eq:tdcorrectori} will give us an estimate of the form
\begin{align}
\norm{\nabla \partial_\alpha \xi_i}_{L^2(\tilde{\mu}^\eps)} \leq C' \|\partial_t \tilde{\nu}^\eps\|_{C^{-3}(\T^d)}(t) +\sum_{l=1}^m
C_l \norm{\xi_i}_{\SobH^{m-l}(\tilde{\mu}^\eps)}\|\partial_t \tilde{\nu}^\eps\|_{C^{-3}(\T^d)}(t) \, ,
\end{align}
where $\abs{\alpha}=m-1$, and the constants $C',\;C_l$ are independent of $t$ and depend on the norms of $\chi_i$, $W$, $V$, and their derivatives. Recursively applying these bounds one obtains~\eqref{eq:Hmboundt}.
\end{proof}
Next, we bound $\|\partial_t \tilde{\nu}^\eps\|_{C^{-3}(\T^d)}$ by $d_2(\tilde{\nu}^\eps,\tilde{\nu}^*)$.
\begin{lemma}\label{lem:boundoftimederivative}
Assume that $\tilde{\nu}^\eps$ and $\tilde{\nu}^*$ are a solution and a steady state to \eqref{periodiceps=1} respectively, then
$$
\|\partial_t \tilde{\nu}^\eps\|_{C^{-3}(\T^d)}\le C d_2(\tilde{\nu}^\eps,\tilde{\nu}^*),
$$
where the constant $C$ depends on dimension, $\beta$, $\norm{W}_{C^2(\T^d)}$, and $\norm{V}_{C^{2}(\T^d)}$.
\end{lemma}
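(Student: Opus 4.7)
The plan is to proceed by duality. By definition of the $C^{-3}(\T^d)$ norm, it suffices to bound $|\langle \partial_t \tilde{\nu}^\eps, \varphi \rangle|$ uniformly in test functions $\varphi \in C^3(\T^d)$ with $\|\varphi\|_{C^3(\T^d)} \le 1$. First I would use the weak form of the PDE \eqref{periodiceps=1} and the fact that $\tilde{\nu}^*$ is a steady state (so that its weak form vanishes) to write
\begin{align*}
\langle \partial_t \tilde{\nu}^\eps, \varphi \rangle = \int_{\T^d}\bigl[\beta^{-1}\Delta \varphi - \nabla \varphi \cdot \nabla V\bigr]\,\dx{(\tilde{\nu}^\eps-\tilde{\nu}^*)} - \int_{\T^d} \nabla \varphi \cdot (\nabla W * \tilde{\nu}^\eps)\,\tilde{\nu}^\eps + \int_{\T^d}\nabla\varphi\cdot(\nabla W*\tilde{\nu}^*)\,\tilde{\nu}^*.
\end{align*}
The crucial step is to split the remaining quadratic-in-$\tilde{\nu}$ piece into two differences that are \emph{linear} in $\tilde{\nu}^\eps-\tilde{\nu}^*$, using the algebraic identity $ab-a^*b^* = (a-a^*)b + a^*(b-b^*)$:
\begin{align*}
\int_{\T^d} \nabla \varphi \cdot (\nabla W * \tilde{\nu}^\eps)\,\dx{\tilde{\nu}^\eps} - \int_{\T^d}\nabla\varphi\cdot(\nabla W*\tilde{\nu}^*)\,\dx{\tilde{\nu}^*} = \int_{\T^d}\nabla\varphi\cdot(\nabla W*\tilde{\nu}^\eps)\,\dx{(\tilde{\nu}^\eps-\tilde{\nu}^*)} + \int_{\T^d}h(y)\,\dx{(\tilde{\nu}^\eps-\tilde{\nu}^*)(y)},
\end{align*}
where, after a Fubini swap, $h(y)=\int_{\T^d}\nabla\varphi(x)\cdot\nabla W(x-y)\,\dx{\tilde{\nu}^*(x)}$.

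The next step is to bound each of the resulting terms using Kantorovich--Rubinstein duality, which gives $|\int f\,\dx{(\mu-\nu)}|\le \|f\|_{\Lip}\, d_1(\mu,\nu)$, together with the elementary inequality $d_1\le d_2$ (which holds on any Polish space by Cauchy--Schwarz applied to an optimal plan). The Lipschitz constants one needs are $\|\beta^{-1}\Delta\varphi-\nabla\varphi\cdot\nabla V\|_{\Lip}\lesssim \beta^{-1}\|\varphi\|_{C^3(\T^d)}+\|\varphi\|_{C^2(\T^d)}\|V\|_{C^2(\T^d)}$, $\|\nabla\varphi\cdot(\nabla W*\tilde{\nu}^\eps)\|_{\Lip}\lesssim \|\varphi\|_{C^2(\T^d)}\|W\|_{C^2(\T^d)}$ (using that convolution with a probability measure does not increase $C^2$ norms), and $\|h\|_{\Lip}\lesssim \|\varphi\|_{C^1(\T^d)}\|W\|_{C^2(\T^d)}$. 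Collecting these estimates produces
\begin{align*}
|\langle \partial_t \tilde{\nu}^\eps, \varphi \rangle| \le C\,\|\varphi\|_{C^3(\T^d)}\, d_2(\tilde{\nu}^\eps,\tilde{\nu}^*),
\end{align*}
with $C$ depending only on $\beta$, $\|W\|_{C^2(\T^d)}$, and $\|V\|_{C^2(\T^d)}$, and taking the supremum over admissible $\varphi$ yields the claim.

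The only delicate point is the bilinear term: one has to be sure to rewrite it as a linear functional of $\tilde{\nu}^\eps-\tilde{\nu}^*$ in \emph{both} of the two arguments separately, otherwise the Kantorovich duality does not apply with a quantitative constant independent of $\tilde{\nu}^\eps$. The use of $C^{-3}$ rather than $C^{-2}$ (which one might naively expect from a second-order operator) is simply a convenient cushion so that all the Lipschitz constants encountered are controlled by a single $\|\varphi\|_{C^3}$, matching the regularity requirement implicit in Lemma~\ref{lem:exreg}.
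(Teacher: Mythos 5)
Your proof is correct and takes essentially the same route as the paper: subtract the weak form of the steady-state equation, decompose the bilinear interaction term via $ab-a^*b^*=(a-a^*)b+a^*(b-b^*)$, and estimate each linear-in-$(\tilde\nu^\eps-\tilde\nu^*)$ piece by Kantorovich--Rubinstein duality and $d_1\le d_2$. Your write-up is slightly more careful (e.g.\ the Fubini step producing $h(y)$ and the correct $\beta^{-1}$ factor, whereas the paper's displayed identity has a couple of inconsequential typos), but the underlying argument is identical.
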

\begin{proof}
Using \eqref{periodiceps=1} and that $\tilde{\nu}^*$ is a steady state, we obtain that for any test function $\varphi$
$$
\int_{\T^d} \partial_t \tilde{\nu}^\eps\varphi\;\dx{x}=\beta\int_{\T^d}\Delta\varphi  \tilde{\nu}^\eps-\nabla\varphi\cdot( \nabla W\ast\tilde{\nu}^\eps+\nabla V)\tilde{\nu}^\eps\;\dx{x}
$$
and
$$
0=\int_{\T^d} \partial_t \tilde{\nu}^*\varphi\;\dx{x}=\beta\int_{\T^d}\Delta\varphi \tilde{\nu}^*-\nabla\varphi\cdot( \nabla W\ast\tilde{\nu}^*+\nabla V)\tilde{\nu}^*\;\dx{x}.
$$
Therefore,
$$
\int_{\T^d} \partial_t \tilde{\nu}^\eps\varphi\;\dx{x}=\beta\int_{\T^d}\Delta\varphi  (\tilde{\nu}^\eps-\nu^*)-\nabla\varphi\cdot( \nabla W\ast\tilde{\nu}+\nabla V)(\tilde{\nu}^\eps-\tilde{\nu}^*)+ \nabla\varphi\cdot \nabla W\ast(\tilde{\nu}^\eps-\tilde{\nu}^*)\tilde{\nu}^*\;\dx{x}
$$
Using the dual formulation of the 1-Wasserstein distance we can obtain the following bound
$$
\left|\int_{\T^d} \partial_t \tilde{\nu}^\eps\varphi\;\dx{x}\right|=\beta (\|\varphi\|_{C^3(\T^d)}+ \|\varphi\|_{C^2(\T^d)}(\norm{W}_{C^2(\T^d)}+\norm{V}_{C^2(\T^d)})+\|\varphi\|_{C^1}\norm{W}_{C^2(\T^d)})d_1(\tilde{\nu}^\eps,\tilde{\nu}^*).
$$
Finally, bounding the 1-Wasserstein distance by the 2-Wasserstein distance we obtain
$$
d_1(\tilde{\nu}^\eps,\tilde{\nu}^*)\le d_2(\tilde{\nu}^\eps,\tilde{\nu}^*) \, .
$$
Thus we have the desired estimate.
\end{proof}

We now study the behaviour of the underlying SDE associated to~\eqref{eps=1}.
\begin{lemma}\label{mCLT}
Consider the mean field SDE
\begin{align}
\begin{cases} \label{eq:mckeanSDE}
\dx{Y}_t^\eps &= -\nabla V(Y_t^\eps)\dx{t} - \nabla (W \ast \nu (t))(Y_t^\eps)\dx{t} + \sqrt{2 \beta^{-1}} dB_t \\
\mathrm{Law}(Y_0^\eps)&= \nu_0^\eps=\eps^d\rho_0^\eps(\eps x) \in \cP(\R^d) \, .
\end{cases}
\end{align}
where $\nu(t)$ is a solution of~\eqref{eps=1} with initial data $\nu_0^\eps$ and  $B_t$ is a standard $d$-dimensional Wiener process Then for fixed $\eps>0$, the random variables $\set{t^{-1/2} Y_t^\eps}_{t > 0}$ converge in law (specifically in $d_2$) as $t \to \infty$ to a mean zero Gaussian random variable $Y$ with covariance matrix $2 A^{\eff}_* \in \R^{d \times d}$. 
\end{lemma}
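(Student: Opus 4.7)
The plan is to carry out an Itô decomposition using the time-dependent corrector $\chi$ from Lemma~\ref{lem:exreg} and then apply the martingale central limit theorem. First, a short computation shows that the corrector equation $\nabla\cdot(\tilde{\mu}^\eps\nabla\chi_i)=-\partial_{x_i}\tilde{\mu}^\eps$ is equivalent (after dividing by $\tilde{\mu}^\eps$ and using $\nabla\log\tilde{\mu}^\eps=-\beta(\nabla V+\nabla W*\tilde{\nu}^\eps)$) to
\begin{equation*}
\beta^{-1}\Delta\chi_i-(\nabla V+\nabla W*\tilde{\nu}^\eps)\cdot\nabla\chi_i=\partial_{x_i}V+\partial_{x_i}(W*\tilde{\nu}^\eps),
\end{equation*}
i.e. $L^\eps\chi=\nabla V+\nabla W*\tilde{\nu}^\eps$, where $L^\eps$ is the (instantaneous) generator of the quotiented process associated to~\eqref{eq:mckeanSDE} on $\T^d$.

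The second step is to apply Itô's formula to $Y_t^\eps+\chi(Y_t^\eps,t)$, viewing $\chi$ as periodic on $\R^d$. The drift of the transformed process telescopes exactly: the original drift $-(\nabla V+\nabla W*\tilde{\nu}^\eps)(Y_t^\eps)$ is cancelled by the sum $\beta^{-1}\Delta\chi(Y_t^\eps,t)-(\nabla V+\nabla W*\tilde{\nu}^\eps)(Y_t^\eps)\cdot\nabla\chi(Y_t^\eps,t)$, leaving only $\partial_t\chi(Y_t^\eps,t)\,dt$, while the noise becomes $\sqrt{2\beta^{-1}}(I+\nabla\chi(Y_t^\eps,t))\,dB_t$. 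Rearranging and dividing by $\sqrt{t}$,
\begin{equation*}
\frac{Y_t^\eps}{\sqrt{t}}=\frac{R_t}{\sqrt{t}}+\frac{1}{\sqrt{t}}\int_0^t\partial_t\chi(Y_s^\eps,s)\,ds+\frac{\sqrt{2\beta^{-1}}}{\sqrt{t}}\int_0^t(I+\nabla\chi(Y_s^\eps,s))\,dB_s,
\end{equation*}
where $R_t$ collects $Y_0^\eps+\chi(Y_0^\eps,0)-\chi(Y_t^\eps,t)$ and is bounded in $L^2$ because $\chi$ is bounded on $\T^d$ by~\eqref{eq:Hmbound}. The bound~\eqref{eq:Hmboundt} from Lemma~\ref{lem:exreg} together with Lemma~\ref{lem:boundoftimederivative} gives $\|\partial_t\chi(\cdot,s)\|_{L^\infty}\le C\,d_2(\tilde{\nu}^\eps(s),\tilde{\nu}^*)\le C's^{-p/2}$ with $p>1$ by the standing hypothesis~\eqref{exponential convergence} of Theorem~\ref{thm:variabledata}, so the drift-remainder term is at most of order $t^{1/2-p/2}\to 0$.

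The third step treats the martingale. By the martingale CLT, it suffices to identify the limit of the quadratic variation
\begin{equation*}
\frac{2\beta^{-1}}{t}\int_0^t(I+\nabla\chi(Y_s^\eps,s))(I+\nabla\chi(Y_s^\eps,s))^T\,ds.
\end{equation*}
Since $\nabla\chi(\cdot,s)\to\nabla\chi^*$ uniformly in $x$ (this uses~\eqref{eq:Hmbound}, the PDE for $\chi-\chi^*$, and the decay of $\tilde{\nu}^\eps(s)\to\tilde{\nu}^*$), and since the law of the quotiented process $\dot Y_s^\eps$ on $\T^d$ converges to $\tilde{\nu}^*$, an ergodic-theorem argument for the asymptotically autonomous diffusion gives
\begin{equation*}
\frac{1}{t}\int_0^t(I+\nabla\chi(Y_s^\eps,s))(I+\nabla\chi(Y_s^\eps,s))^T\,ds\;\longrightarrow\;\int_{\T^d}(I+\nabla\chi^*)(I+\nabla\chi^*)^T\,d\tilde{\nu}^*.
\end{equation*}
A standard integration by parts using~\eqref{limiting corrector} reduces the right-hand side to $\beta A^{\eff}_*$, giving the desired covariance $2A^{\eff}_*$. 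Convergence in $d_2$, rather than mere convergence in law, follows because the martingale has $L^2$-norm equal to the trace of its quadratic variation, which we have just shown converges, while the remainder terms vanish in $L^2$.

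The main obstacle is the ergodic averaging step for the time-inhomogeneous quotiented diffusion: the drift $-\nabla V-\nabla W*\tilde{\nu}^\eps(s)$ is only asymptotically periodic-autonomous, so one cannot directly invoke a classical ergodic theorem. The right way forward is a comparison argument, freezing the drift to its $\tilde{\nu}^*$-limit and using uniform hypo-ellipticity plus the quantitative convergence $d_2(\tilde{\nu}^\eps(s),\tilde{\nu}^*)\le Cs^{-p/2}$ to absorb the perturbation, together with the uniform bound on $\|\chi(\cdot,s)\|_{H^m}$ from Lemma~\ref{lem:exreg} which provides the needed equicontinuity in the frozen family of correctors.
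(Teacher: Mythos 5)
Your decomposition is the one the paper uses: applying It\^o's formula to $Y_t^\eps + \chi(Y_t^\eps,t)$ is algebraically identical to the paper's step of applying It\^o to $\chi$ alone and then adding the SDE for $Y$. The cancellation of the drift, the treatment of the boundary terms $Y_0^\eps+\chi(Y_0^\eps,0)-\chi(Y_t^\eps,t)$ via the uniform $H^m$ bound~\eqref{eq:Hmbound}, and the control of $\int_0^t\partial_s\chi\,ds$ via~\eqref{eq:Hmboundt}, Lemma~\ref{lem:boundoftimederivative}, and hypothesis~\eqref{exponential convergence} all agree with the paper. The genuine gap is the one you flag yourself: showing that the normalized quadratic variation $t^{-1}\int_0^t (I+\nabla\chi(\dot Y_s^\eps,s))(I+\nabla\chi(\dot Y_s^\eps,s))^T\,ds$ converges in probability, which is the hypothesis the martingale CLT actually requires. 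Your sketch (``freezing the drift\ldots uniform hypo-ellipticity plus the quantitative convergence\ldots to absorb the perturbation'') names the right idea but is not an argument; you need a concrete way to compare the time-inhomogeneous quotiented process $\dot Y_t^\eps$ to the frozen process, and convergence of marginal laws $\tilde\nu^\eps(s)\to\tilde\nu^*$ does not by itself give an ergodic theorem for time-averages along trajectories. (Minor: the process is uniformly \emph{elliptic}, not merely hypo-elliptic.)

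The paper closes this gap concretely. It introduces the stationary ergodic process $\dot X_t$ solving~\eqref{eq:mckeanSDEs} (the frozen dynamics started in $\tilde\nu^*$) and constructs in Appendix~\ref{ap:coupling}, Lemma~\ref{coupling existence}, a reflection coupling with $\sup_{\eps}\mathbb{E}[d_{\T^d}(\dot X_t,\dot Y_t)^2]\to 0$, using~\eqref{exponential convergence}. Combined with It\^o isometry and the Lipschitz bound on $\nabla\Psi^*$, this lets one replace $\dot Y_s^\eps$ by $\dot X_s$ inside the stochastic integral up to an $L^2$-small error, after which Birkhoff's ergodic theorem applies to the stationary process and the martingale CLT finishes the proof. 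Note also that the paper does \emph{not} use any ergodic averaging to identify the limiting covariance: Step~2 computes $\mathbb{E}[(G_t^\eps)_i(G_t^\eps)_j]$ directly by It\^o isometry using only $\mathrm{Law}(\dot Y_s^\eps)=\tilde\nu^\eps(s)\to\tilde\nu^*$, and the coupling/ergodicity is needed only for the Gaussianity of the limit (Step~3). This split is cleaner than trying to establish convergence of the time-averaged quadratic variation outright. Finally, your last sentence on $d_2$-convergence needs a small supplement: weak convergence plus convergence of second moments yields $d_2$-convergence only with uniform integrability of second moments (tightness in $\cP_2$); this holds here because all the error terms vanish in $L^2(\mathbb{P})$, but it should be said.
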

\begin{proof}
Consider $\nu^\eps(t)$ the solution to the mean field PDE \eqref{eps=1} with initial data given by $\nu^\eps_0$. (we add the $\eps$ superscript to $\nu(t)$ to emphasise the dependence of the initial data on $\eps$). As $V$ and $W$ are smooth $1$-periodic functions, it follows that $(W\ast\nu(t))(x)$ is also $1$-periodic and is equal to $(W\ast \tilde{\nu}^\eps(t))(x)$, where $\tilde{\nu}^\eps$ is the periodic rearrangement of $\nu^\eps(t)$. Thus the SDE in~\eqref{eq:mckeanSDE} can be rewritten as
\begin{align}
\begin{cases} \label{eq:mckeanSDEqr}
\dx{Y}_t^\eps &= -\nabla V(\dot{Y}_t^\eps)\dx{t} - \nabla (W \ast \tilde{\nu}^\eps (t))(\dot{Y}_t^\eps)\dx{t} + \sqrt{2 \beta^{-1}} dB_t \\
\mathrm{Law}(Y_0^\eps)&= \nu_0^\eps=\eps^d\rho_0^\eps(\eps x) \in \cP(\R^d) \, ,
\end{cases}
\end{align}
where $\dot{Y}_t^\eps$ is the quotient process, i.e. $(\dot{Y}_t^\eps)_j=(Y_t^\eps)_j\; (\mbox{mod} \;  1)$ for all $j= 1,\dots,d$. Furthermore, $\dot{Y}_t^\eps$ satisfies the following SDE
\begin{align}
\begin{cases} \label{eq:mckeanSDEq}
\dx{\dot{Y}}_t^\eps &= -\nabla V(\dot{Y}_t^\eps)\dx{t} - \nabla (W \ast \tilde{\nu}^\eps (t))(\dot{Y}_t^\eps)\dx{t} + \sqrt{2 \beta^{-1}} d\dot{B}_t \\
\mathrm{Law}(\dot{Y}_0^\eps)&= \tilde{\nu}_0^\eps \in \cP(\T^d) \, ,
\end{cases}
\end{align}
where $\dot{B}_t$ is a $\T^d$-valued Wiener process. 
Now consider the unique solution $\chi(\cdot,t) \in \SobH^1(\tilde{\mu}^\eps)$ of the time-dependent corrector problem in~\eqref{eq:tdcorrector} given by Lemma~\ref{lem:exreg}. Applying Ito's lemma to
$\chi(Y_t^\eps,t)$ we obtain the following 
\begin{align}
\chi(Y_t^\eps,t)&= \chi(Y_0^\eps,0) + \int_0^t \partial_s \chi(\dot{Y}_s^\eps,s) \dx{s} \\& + \int_0^t \bra*{- \bra*{\nabla V(\dot{Y}_s^\eps) +\nabla (W \ast \tilde{\nu}^\eps (s))(\dot{Y}_s^\eps)} \cdot \nabla   + \beta^{-1} \Delta } \chi(\dot{Y}_s^\eps,s) \dx{s} \\
&+ \sqrt{2 \beta^{-1}} \int_0^t \nabla \chi(\dot{Y}_s^\eps,s) dB_s 
\\&=\chi(Y_0^\eps,0) + \int_0^t \partial_s \chi(\dot{Y}_s^\eps,s) \dx{s} + \int_0^t\beta^{-1} \bra*{\tilde{\mu}^\eps}^{-1}\bra*{\nabla \cdot\bra*{\tilde{\mu}^\eps \nabla \chi} } (\dot{Y}_s^\eps,s)\dx{s} \\
&+ \sqrt{2 \beta^{-1}} \int_0^t \nabla \chi(\dot{Y}_s^\eps,s) dB_s  \, ,
\end{align}
where we have used the fact that $f(Y_t^\eps)=f(\dot{Y}_t^\eps)$ for any $1$-periodic function $f$ and the equation for $\tilde{\mu}^\eps$. Using the fact that $\chi$ satisfies~\eqref{eq:tdcorrector}, the above expression simplifies to
\begin{align}
\chi(Y_t^\eps,t)&= \chi(Y_0^\eps,0) + \int_0^t \partial_s \chi(\dot{Y}_s^\eps,s) \dx{s}  - \int_0^t\beta^{-1} \bra*{\tilde{\mu}^\eps}^{-1} \nabla \tilde{\mu}^\eps(\dot{Y}_s^\eps,s)  \dx{s} \\
&+ \sqrt{2 \beta^{-1}} \int_0^t \nabla \chi(\dot{Y}_s^\eps,s) dB_s   \\
&= \chi(\dot{Y}_0^\eps,0) + \int_0^t \partial_s \chi(\dot{Y}_s^\eps,s) \dx{s}   +\int_0^t\nabla V(\dot{Y}_s^\eps)+ \nabla (W \ast \tilde{\nu}^\eps (s))(\dot{Y}_s^\eps)\dx{s}\\
&+ \sqrt{2 \beta^{-1}} \int_0^t \nabla \chi(\dot{Y}_s^\eps,s) dB_s \, .
\end{align}
Integrating~\eqref{eq:mckeanSDEq} from $0$ to $t$ and adding the above expression we obtain
\begin{align}
Y_t&= Y_0+ \chi(Y_0^\eps,0) - \chi(Y_t^\eps,t)  + \int_0^t \partial_s \chi(\dot{Y}_s^\eps,s) \dx{s}  \\
& + \sqrt{2 \beta^{-1}} \int_0^t \bra*{I+ \nabla \chi(\dot{Y}_s^\eps,s)} dB_s \, .
\end{align}
Multiplying by $t^{-1/2}$ we obtain
\begin{align}
t^{-1/2}Y_t^\eps &= t^{-1/2} \bra*{\chi(Y_0^\eps,0) - \chi(Y_t^\eps,t)   + \int_0^t \partial_s \chi(\dot{Y}_s^\eps,s) \dx{s} } + t^{-1/2}Y_0^\eps \\
& +t^{-1/2} \sqrt{2 \beta^{-1}} \int_0^t \bra*{I+ \nabla \chi(\dot{Y}_s^\eps,s)} dB_s \, .
\end{align}
To analyse the limit of $t^{-1/2}Y_t^\eps$ we start by showing that the first three terms on the RHS of the above expression go to zero in $\Leb^\infty(\mathbb{P})$ as $t \to \infty$. Picking $m>d/2$ and applying the results of Lemma~\ref{lem:exreg} along with Morrey's inequality we have
\begin{align}
&t^{-1/2} \bra*{\chi(Y_0^\eps,0) - \chi(Y_t^\eps,t)   + \int_0^t \partial_s \chi(\dot{Y}_s^\eps,s) \dx{s} } \\&\leq 
t^{-1/2} C_d \bra*{2\norm{\chi(\cdot,t)}_{\SobH^m(\T^d)}   + \int_0^t \norm{\partial_s \chi(\cdot,t)}_{\SobH^m(\T^d)} \dx{s} }\\
& \leq t^{-1/2} C_d \bra*{2C_1   + \int_0^t \sum_{k=1}^m c_k \|\partial_t\tilde{\nu}^\eps\|_{C^{-3}(\T^d)}^k(s) \dx{s} } 
\stackrel{t \to \infty}{\to} 0 \, ,
\label{e1}
\end{align}
where in the last step we have used Lemma~\ref{lem:boundoftimederivative} and applied assumption~\eqref{exponential convergence}. For the fourth term we simply use the fact that $Y_0$ has finite second moment to argue that it goes to zero in $\Leb^2(\mathbb{P})$. Thus studying the behaviour of $t^{-1/2} Y_t$, in law, as $t \to \infty$
is equivalent to studying the asymptotic behaviour of the martingale term $Z_t$, where
\begin{align}
Z_t:=  t^{-1/2} \sqrt{2 \beta^{-1}} \int_0^t \bra*{\Id+ \nabla \chi(\dot{Y}_s^\eps,s)} dB_s \, .
\end{align}

We will proceed in steps: In \textit{Step 1}, we will argue that the $\chi$ in the above expression can be replaced by $\Psi^*$, where $\Psi^*$ solves~\eqref{limiting corrector}. In \textit{Step 2}, we will compute the limiting covariance matrix of $Z_t$ as $t \to \infty$ and show that it is precisely $2 A^{\eff}_*$. Finally, in \textit{Step 3}, we will argue that the limiting random variable is a mean zero Gaussian.

\textit{Step 1.} First note that $\mu^\eps(t) \to \tilde{\nu}^*$ in $L^\infty$ as $t \to \infty$. Indeed, we have that
\begin{align}
&\abs*{Z^{-1}(\tilde{\mu}^\eps) \exp\bra*{-\beta( W \ast \tilde{\nu}^\eps +V)} -Z^{-1}(\tilde{\nu}^*)\exp\bra*{-\beta( W \ast \tilde{\nu}^* +V)} } 
\\
& \leq Z^{-1}(\tilde{\mu}^\eps) \abs*{ \exp\bra*{-\beta( W \ast \tilde{\nu}^\eps +V)} -\exp\bra*{-\beta( W \ast \tilde{\nu}^* +V)} }\\
& +  \abs*{Z^{-1}(\tilde{\mu}^\eps)- Z^{-1}(\tilde{\nu}^*)} e^{\beta(\norm{W}_{\Leb^\infty(\T^d)} + \norm{V}_{\Leb^\infty(\T^d)})} \\
& \leq e^{2\beta(\norm{W}_{\Leb^\infty(\T^d)} + \norm{V}_{\Leb^\infty(\T^d)})} \beta\abs*{ W \ast \tilde{\nu}^\eps-  W \ast \tilde{\nu}^*} +  e^{3\beta(\norm{W}_{\Leb^\infty(\T^d)} + \norm{V}_{\Leb^\infty(\T^d)})} \beta\abs*{ W \ast \tilde{\nu}^\eps-  W \ast \tilde{\nu}^*} \\
& \leq Cd_2(\tilde{\nu}^\eps, \tilde{\nu}^*) \stackrel{t \to \infty}{\to}0  \, , 
\label{eq:munuconvergence}
\end{align}
where we have used~\eqref{exponential convergence}. We now argue that $\nabla \chi$ converges to $\nabla \Psi^*$ in $\Leb^2(\T^d; \R^d)$. We perform the proof component-wise using the weak formulations of\eqref{eq:tdcorrector} and~\eqref{limiting corrector}
\begin{align}
&\intt{\nabla \chi_i \cdot \nabla \phi \tilde{\mu}^{\eps}(t)} - \intt{\nabla \Psi^*_i \cdot \nabla \phi \tilde{\mu}^{\eps}(t)} \\&= \intt{\partial_{x_i}(\tilde{\mu}^\eps-\tilde{\nu^*} ) \phi } +\intt{\nabla \Psi^*_i \cdot \nabla \phi \tilde{\nu}^{*}(t)} - \intt{\nabla \Psi^*_i \cdot \nabla \phi \tilde{\mu}^{\eps}(t)}  \\
&=-\intt{(\tilde{\mu}^\eps-\tilde{\nu^*} ) \partial_{x_i} \phi } +\intt{\nabla \Psi^*_i \cdot \nabla \phi \bra*{\tilde{\nu}^{*}(t) -\tilde{\mu}^{\eps}(t) }}  \\
& \leq  \norm{\tilde{\mu}^\eps-\tilde{\nu^*} }_{\Leb^\infty(\T^d)}(1+ \norm{\nabla\Psi^*_i}_{\Leb^2(\T^d)})\norm{\nabla\phi }_{\Leb^2(\T^d)} \, .
\end{align}
Choosing $\phi=\chi_i- \Psi^*_i$ and using the uniform lower bound from~\eqref{eq:mubounds}, we obtain that
\begin{align}
\norm{\nabla(\chi_i- \Psi^*_i)}_{\Leb^2(\T^d)} \leq C\norm{\tilde{\mu}^\eps-\tilde{\nu^*} }_{\Leb^\infty(\T^d)}(1+ \norm{\nabla\Psi^*_i}_{\Leb^2(\T^d)})  \stackrel{t \to \infty}{\to}0 \, ,
\label{eq:chitopsi}
\end{align}
using~\eqref{eq:munuconvergence}. Thus we can now simply apply Ito's isometry as follows
\begin{align}
 & \mathbb{E} \pra*{t^{-1} \abs*{\sqrt{2 \beta^{-1}} \int_0^t \bra*{\Id+ \nabla \chi(\dot{Y}_s^\eps,s)} dB_s -   \sqrt{2 \beta^{-1}} \int_0^t \bra*{\Id+ \nabla \Psi^*(\dot{Y}_s^\eps,s)} dB_s}^2} \\
 &=2 \beta^{-1}t^{-1}\mathbb{E} \pra*{ \abs*{   \int_0^t \bra*{\nabla(\chi- \Psi^*)}(\dot{Y}_s^\eps,s) dB_s}^2}  \\
 &=2 \beta^{-1}t^{-1}\mathbb{E} \pra*{    \int_0^t \sum_{i=1}^d\abs*{\nabla(\chi_i- \Psi^*_i)}^2(\dot{Y}_s^\eps,s) \dx{s}}   \\
 & \leq 2 \beta^{-1}t^{-1}  \int_0^t \sum_{i=1}^d\norm*{\nabla(\chi_i- \Psi^*_i)}_{\Leb^\infty(\T^d)}^2(s) \dx{s} \, . 
\end{align}
Picking some $m >d/2$ and applying Morrey's inequality we obtain
\begin{align}
 & \mathbb{E} \pra*{t^{-1} \abs*{\sqrt{2 \beta^{-1}} \int_0^t \bra*{\Id+ \nabla \chi(\dot{Y}_s^\eps,s)} dB_s -   \sqrt{2 \beta^{-1}} \int_0^t \bra*{\Id+ \nabla \Psi^*(\dot{Y}_s^\eps,s)} dB_s}^2} \\
 & \leq 2 \beta^{-1}t^{-1} C_d^2 \int_0^t \sum_{i=1}^d\norm*{\nabla(\chi_i- \Psi^*_i)}_{\SobH^m_0(\T^d)}^2(s) \dx{s} \\
 &\leq 2 \beta^{-1}t^{-1} C_d^2 C^2 \int_0^t \sum_{i=1}^d\norm*{\nabla(\chi_i- \Psi^*_i)}_{\SobH^{m+1}_0(\T^d)}^{2 \alpha}(s) \norm*{\nabla(\chi_i- \Psi^*_i)}_{\Leb^{2}(\T^d)}^{2-2 \alpha}(s) \dx{s}  \, ,
\end{align}
where we have applied the Gagliardo--Nirenberg--Sobolev inequality and $\alpha=m/(m+1)$. We bound the $\SobH^{m+1}$-norm in the above expression by a uniform constant using Lemma~\ref{lem:exreg} and the fact that $\Psi^*$ is the solution of a uniformly elliptic PDE with smooth coefficients. Hence, using~\eqref{eq:chitopsi} we obtain
 \begin{align}
 \mathbb{E} \pra*{t^{-1} \abs*{\sqrt{2 \beta^{-1}} \int_0^t \bra*{\Id+ \nabla \chi(\dot{Y}_s^\eps,s)} dB_s -   \sqrt{2 \beta^{-1}} \int_0^t \bra*{\Id+ \nabla \Psi^*(\dot{Y}_s^\eps,s)} dB_s}^2}\stackrel{t \to \infty}{\to} 0 \, . \label{e3}
\end{align}

\textit{Step 2.} In this step, we compute the limiting covariance as $t \to \infty$ of the following term
\begin{align}
G_t^\eps:=  t^{-1/2} \sqrt{2 \beta^{-1}} \int_0^t \bra*{\Id+ \nabla \Psi^*(\dot{Y}_s^\eps)} dB_s \, .
\label{intermediate variable}
\end{align}
Applying Ito's isometry again we have
\begin{align}
\mathbb{E} \pra*{(G_t^\eps)_i (G_t^\eps)_j}& = 2 \beta^{-1} t^{-1} \int_0^t \mathbb{E}\pra*{\bra*{\sum_{k=1}^d (\delta_{ik} + \partial_{x_k} \Psi^*_i)(\delta_{jk} + \partial_{x_k} \Psi^*_j)}(\dot{Y}_s^\eps)}  \dx{s} 
\\
&= 2 \beta^{-1} t^{-1} \int_0^t \intt{\bra*{\sum_{k=1}^d (\delta_{ik} + \partial_{x_i} \Psi^*_k)(\delta_{jk} + \partial_{x_j} \Psi^*_k)}(x) \tilde{\nu}^\eps(x,s)}  \dx{s} \\
&= 2 \beta^{-1} t^{-1} \int_0^t \intt{\bra*{\sum_{k=1}^d (\delta_{ik} + \partial_{x_i} \Psi^*_k)(\delta_{jk} + \partial_{x_j} \Psi^*_k)}(x) (\tilde{\nu}^\eps-\tilde{\nu}^*)(x,s)}  \dx{s} \\
&+2 \beta^{-1} t^{-1} \int_0^t  \dx{s} \bra*{\intt{\bra*{\sum_{k=1}^d (\delta_{ik} + \partial_{x_i} \Psi^*_k)(\delta_{jk} + \partial_{x_j} \Psi^*_k)}(x) \tilde{\nu}^*(x)}} \, .
\label{limiting covariance}
\end{align} 
We can bound the first term as follows
\begin{align}
&2 \beta^{-1} t^{-1} \int_0^t \intt{\bra*{\sum_{k=1}^d (\delta_{ik} + \partial_{x_i} \Psi^*_k)(\delta_{jk} + \partial_{x_j} \Psi^*_k)}(x) (\tilde{\nu}^\eps-\tilde{\nu}^*)(x,s)}  \dx{s}\\
&\qquad\qquad \lesssim t^{-1}
\int_0^t d_2(\tilde{\nu}^\eps(s),\tilde{\nu}^*) \dx{s} \stackrel{t \to \infty}{\to} 0 \,, \label{e4}
\end{align}
where we have used~\eqref{exponential convergence} and the fact $\nabla \Psi^*$ is Lipschitz. Then from~\eqref{limiting covariance} it follows that
\begin{align}
\lim_{t \to \infty}\mathbb{E} \pra*{(G_t^\eps)_i (G_t^\eps)_j}&=2 \beta^{-1}\bra*{\intt{\bra*{\sum_{k=1}^d (\delta_{ik} + \partial_{x_i} \Psi^*_k)(\delta_{jk} + \partial_{x_j} \Psi^*_k)}(x) \tilde{\nu}^*(x)}} \\
&= 2 \beta^{-1}\bra*{\intt{ \bra*{\delta_{ij} + \partial_{x_i} \Psi^*_j + \partial_{x_j} \Psi^*_i + \sum_{k=1}^d\partial_{k} \Psi^*_i \partial_{k} \Psi^*_j } (x) \tilde{\nu}^*(x)}} \\
&=2 \beta^{-1}\bra*{\intt{ \bra*{\delta_{ij} + \partial_{x_i} \Psi^*_j + \partial_{x_j} \Psi^*_i + \nabla \Psi^*_i \cdot \nabla  \Psi^*_j } (x) \tilde{\nu}^*(x)}} \\
&=2 \beta^{-1}\bra*{\intt{ \bra*{\delta_{ij} + \partial_{x_i} \Psi^*_j } (x) \tilde{\nu}^*(x)}}= 2 \bra*{A^{\eff}_*}_{ij} \, ,
\end{align}
where in the penultimate step we have used the fact that $\Psi^*$ satisfies~\eqref{limiting corrector}.

\textit{Step 3.} In the final step, we will show that the limit in law of $G_t^\eps$ as $t \to \infty$ is a Gaussian random variable. The key step involves replacing  $\dot{Y}_t^\eps$ in the expression for $G_t^\eps$ in~\eqref{intermediate variable} by $\dot{X}_t$, where $\dot{X}_t$ solves
\begin{align}\label{eq:mckeanSDEs}
\begin{cases}
\dx{\dot{X}}_t &= -\nabla V(\dot{X}_t)\dx{t} - \nabla (W \ast \tilde{\nu}^* )(\dot{X}_t)\dx{t} + \sqrt{2 \beta^{-1}} d\dot{B}_t \\
\mathrm{Law}(\dot{X}_0)&= \tilde{\nu}^* \in \cP(\T^d) \, .
\end{cases}
\end{align}
Here $\dot{X}_t$ is a stationary, ergodic process with invariant measure $\tilde{\nu}^*$ and is precisely the process $\dot{Y}_t$ started from the invariant measure  $\tilde{\nu}^*$.  We assert now (cf. Lemma~\ref{coupling existence}) that~\eqref{exponential convergence} implies that there exists a coupling of $(\dot{X}_t,\dot{Y}_t)$ (indeed a reflection coupling) such that $\sup_{\eps >0}\mathbb{E}\pra*{d_{\T^d}(\dot{X}_t,\dot{Y}_t)^2} \to 0$ as $t \to \infty$. Using this we obtain,
\begin{align}
\mathbb{E}\pra*{\bra*{G_t^\eps-t^{-1/2} \sqrt{2 \beta^{-1}} \int_0^t \bra*{\Id+ \nabla \Psi^*(\dot{X}_s)} dB_s}^2 } \leq  2 C\beta^{-1}t^{-1}\int_0^t \mathbb{E}\pra*{d_{\T^d}(\dot{X}_s,\dot{Y}_s)^2} \dx{s} \stackrel{t \to \infty}{\to} 0 \, , \label{e5} 
\end{align}
where we simply use the Ito isometry and the fact that $\Psi^*$ has a Lipschitz regular gradient. Thus we are left to analyse the following term:
\begin{align}
t^{-1/2} \sqrt{2 \beta^{-1}} \int_0^t \bra*{\Id+ \nabla \Psi^*(\dot{X}_s)} dB_s
\end{align} 
where $\dot{X}_s$ is stationary ergodic process. Additionally, we know the limiting covariance of the above term, i.e. $2 A^{\eff}_*$. We now apply the Birkhoff ergodic theorem followed by the martingale central limit theorem (cf.~\cite[Theorem 2.1]{KLO12} or~\cite[Theorem 2.1]{Whi07}) to complete the proof. The fact that the convergence is also in $d_2$ follows from the fact that the covariance matrices also converge.
\end{proof}
The above result holds for a fixed $\eps>0$, however we can improve it by using the fact that the convergence in~\eqref{exponential convergence} is uniform in $\eps>0$. A consequence of the analysis in the previous result is the following corollary:
\begin{corollary}\label{uniform eps}
Consider the process:
\begin{align}
M_t^\eps:=t^{-\frac{1}{2}} \bra*{\chi(Y_0^\eps,0) - \chi(Y_t^\eps,t)  + \int_0^t \partial_s \chi(Y_s^\eps,s) \dx{s}  
 + \sqrt{2 \beta^{-1}} \int_0^t \bra*{I+ \nabla \chi(\dot{Y}_s^\eps,s)} dB_s \, } \, ,
\end{align}
where $\chi$ is the solution of the time-dependent corrector problem~\eqref{eq:tdcorrector} and $Y_t^\eps$ solves~\eqref{eq:mckeanSDE}. Then
\begin{align}
\lim_{t \to \infty}\sup_{\eps>0}\norm*{M_t^\eps-t^{-1/2} \sqrt{2 \beta^{-1}} \int_0^t \bra*{\Id+ \nabla \Psi^*(\dot{X}_s)} dB_s}_{\Leb^2(\mathbb{P})} \to 0 \, ,
\end{align}
where $\dot{X}_t$ is the solves and is coupled to $\dot{Y}_t^\eps$ as in the proof of Lemma~\ref{mCLT}.
\end{corollary}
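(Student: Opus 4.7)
The plan is to revisit each estimate in the proof of Lemma~\ref{mCLT} and verify that it is already uniform in $\eps > 0$ thanks to the uniform-in-$\eps$ hypothesis \eqref{exponential convergence}. Write $M_t^\eps = B_t^\eps + N_t^\eps$ where
\begin{align}
B_t^\eps &:= t^{-1/2}\bra*{\chi(Y_0^\eps,0)-\chi(Y_t^\eps,t)+\int_0^t\partial_s\chi(\dot{Y}_s^\eps,s)\dx{s}}, \\
N_t^\eps &:= t^{-1/2}\sqrt{2\beta^{-1}}\int_0^t (I+\nabla\chi(\dot{Y}_s^\eps,s))\,dB_s.
\end{align}
The goal is to show $\|B_t^\eps\|_{L^\infty(\mathbb{P})}\to 0$ and $\|N_t^\eps-t^{-1/2}\sqrt{2\beta^{-1}}\int_0^t(I+\nabla\Psi^*(\dot{X}_s))dB_s\|_{L^2(\mathbb{P})}\to 0$ both uniformly in $\eps$.

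\textbf{Boundary term $B_t^\eps$.} I would combine Morrey's inequality with the bounds \eqref{eq:Hmbound}--\eqref{eq:Hmboundt} of Lemma~\ref{lem:exreg} and then invoke Lemma~\ref{lem:boundoftimederivative} to replace $\|\partial_s\tilde{\nu}^\eps\|_{C^{-3}(\T^d)}$ by $C\,d_2(\tilde{\nu}^\eps(s),\tilde{\nu}^*)$. The crucial point is that the constants $C_1$, $c_k$ in Lemma~\ref{lem:exreg} depend only on $d$, $m$, $\|V\|_{C^m}$ and $\|W\|_{C^m}$, hence are $\eps$-independent. Applying \eqref{exponential convergence} gives $\|\partial_s\tilde{\nu}^\eps\|_{C^{-3}}(s)\le C' s^{-p/2}$ uniformly in $\eps$, and since $p>1$ the integral $\int_0^t s^{-pk/2}ds$ grows at most like $t^{1-p/2}$ (for $k=1$) or remains bounded (for $k\geq 2$), so $t^{-1/2}$ times the whole expression tends to $0$ uniformly in $\eps$.

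\textbf{Martingale term $N_t^\eps$.} I would split the difference into the two replacements performed in Steps 1 and 3 of the proof of Lemma~\ref{mCLT}. For the replacement $\chi\leadsto\Psi^*$, Itô's isometry reduces matters to
\[
2\beta^{-1}t^{-1}\int_0^t\sum_{i=1}^d \|\nabla(\chi_i-\Psi^*_i)(\cdot,s)\|_{L^\infty(\T^d)}^2\,ds,
\]
which by the Gagliardo--Nirenberg interpolation already carried out in the proof of the lemma is controlled by $\|\tilde{\mu}^\eps(s)-\tilde{\nu}^*\|_{L^\infty(\T^d)}^{2(1-\alpha)}$ up to uniform constants. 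The estimate \eqref{eq:munuconvergence} shows $\|\tilde{\mu}^\eps(s)-\tilde{\nu}^*\|_{L^\infty}\le C d_2(\tilde{\nu}^\eps,\tilde{\nu}^*)(s)\le C' s^{-p/2}$ uniformly in $\eps$, so the time average tends to $0$ uniformly. For the replacement $\dot{Y}_s^\eps\leadsto\dot{X}_s$ one uses the reflection coupling supplied by Lemma~\ref{coupling existence}, which under \eqref{exponential convergence} yields $\sup_{\eps>0}\mathbb{E}[d_{\T^d}(\dot{X}_s,\dot{Y}_s^\eps)^2]\to 0$; plugging this into the Itô-isometry estimate \eqref{e5} gives the uniform conclusion.

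\textbf{Main obstacle.} There is no genuinely new obstacle: the corollary is essentially a book-keeping exercise. The only point that requires a little care is checking that every constant appearing in Lemma~\ref{lem:exreg}, Lemma~\ref{lem:boundoftimederivative} and the coupling bound depends only on $\beta,d,\|V\|_{C^m},\|W\|_{C^m}$ and on the constants $C,p$ in \eqref{exponential convergence}, but not on $\eps$ or on the initial data $\nu_0^\eps$. Once this is verified, taking the supremum over $\eps>0$ commutes with each intermediate estimate and the corollary follows.
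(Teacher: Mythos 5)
Your proposal is correct and follows essentially the same route as the paper: the paper's proof is a one-sentence observation that the convergences in~\eqref{e1},~\eqref{eq:munuconvergence},~\eqref{e3},~\eqref{e4}, and~\eqref{e5} are all governed by the uniform-in-$\eps$ hypothesis~\eqref{exponential convergence}, which is exactly the book-keeping you carry out explicitly. The only small point worth flagging is that in the boundary-term estimate the integrals $\int_0^t s^{-pk/2}\dx{s}$ have an apparent singularity at $s=0$ for $pk\ge 2$; this is harmless because $d_2(\tilde{\nu}^\eps(s),\tilde{\nu}^*)$ is also bounded by the diameter of $\T^d$, but you should say so rather than rely on the power-law bound alone.
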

\begin{proof}
The proof of this result follows from the fact that the convergence in~\eqref{e1},~\eqref{eq:munuconvergence},~\eqref{e3},~\eqref{e4}, and~\eqref{e5}, are all controlled by~\eqref{exponential convergence} which is uniform in $\eps>0$.
\end{proof}

We can finally put all the pieces together and complete the proof of Theorem~\ref{thm:variabledata}
\begin{proof}[Proof of Theorem~\ref{thm:variabledata}] We would like to understand the behaviour of the trajectory $S_t^\eps \rho_0^\eps$ where $S_t^\eps$ is the solution semigroup associated to~\eqref{nonlineareq}. However, we know that $S_t^\eps \rho_0^\eps= \mathrm{Law}(\eps Y^\eps_{t/\eps^2})$, where $Y_t^\eps$ is the solution~\eqref{eq:mckeanSDE} of with initial law $\eps^d \rho^\eps_0(\eps x)$. Fix $t=1$ and set $\eps^{-2}=s$. Thus we have that
\begin{align}
\eps Y^\eps_{1/\eps^2}& = s^{-\frac{1}{2}} Y^{s^{-\frac{1}{2}}}_{s} \\
&= s^{-\frac{1}{2}} Y_0^{s^{-\frac{1}{2}}} \\
&+ s^{-\frac{1}{2}} \bra*{\chi(Y_0,0) - \chi(Y^{s^{-1/2}}_s,s)  + \int_0^s \partial_u \chi(\dot{Y}^{u^{-1/2}}_u,u) \dx{u}  
 + \sqrt{2 \beta^{-1}} \int_0^s \bra*{I+ \nabla \chi(\dot{Y}^{u^{-1/2}}_u,u)} dB_u \, }.
\end{align}
Applying Corollary~\ref{uniform eps}, we can pass to the limit $s \to \infty$ for the second term on the right hand side of the above expression. Since the convergence is in $\Leb^2(\mathbb{P})$, we can replace the second term in the limit as $s \to \infty$  as follows
\begin{align}
\lim_{s \to \infty}\mathrm{Law}( s^{-\frac{1}{2}} Y^{s^{-\frac{1}{2}}}_{s} ) =  \lim_{s \to \infty}\mathrm{Law}  \bra*{s^{-\frac{1}{2}} Y_0^{s^{-\frac{1}{2}}}   + s^{-1/2}\sqrt{2 \beta^{-1}} \int_0^s \bra*{I+ \nabla \Psi^*(\dot{X}_u)} dB_u } \, .
\end{align}
where $\dot{X}_u$ solves~\eqref{eq:mckeanSDEs}. The two random variables on the right hand side of the above expression are independent. Thus we can rewrite the above expression as
\begin{align}
\lim_{s \to \infty}\mathrm{Law}( s^{-\frac{1}{2}} Y^{s^{-\frac{1}{2}}}_{s} )= \lim_{s \to \infty}F_s * \rho_0^{s^{-1/2}} \, ,
\end{align}
where $F_s$ is the law of $s^{-1/2}\sqrt{2 \beta^{-1}} \int_0^s \bra*{I+ \nabla \Psi^*(\dot{X}_u)} dB_u$.
Since both laws converge in $d_2$, their convolution converges to the convolution of the individual limits in $d_2$ as $s \to \infty$. The limit of $F_s$ can be obtained by the martingale central limit theorem as in the proof of Lemma~\ref{mCLT} while the limit of $\rho_0^{s^{-1/2}}$ is $\rho_0^*$. Thus we have that
\begin{align}
\lim_{s \to \infty}d_2(\mathrm{Law}( s^{-\frac{1}{2}} Y^{s^{-\frac{1}{2}}}_{s} ) , \mathcal{N}(0,A_*^{\eff}) * \rho^* _0)=\lim_{s \to \infty}d_2(S_1^{s^{-\frac{1}{2}}} \rho_0^{s^{-\frac{1}{2}}} , \mathcal{N}(0,A_*^{\eff}) * \rho^* _0) =0 .
\end{align}
Rewriting the same in terms of the laws we have
\begin{align}
\lim_{\eps \to 0}d_2(S_1^\eps \rho_0^\eps , \mathcal{N}(0,A_*^{\eff}) * \rho^* _0) =0.
\end{align}
The choice of time $t=1$ was arbitrary. The same arguments can be repeated for arbitrary $t \geq 0$ to complete the proof of~\eqref{weak convergence}. Assume now that the initial data of~\eqref{eq:LinearKolmogorov}, $\rho_0^{\eps,N}$ is such that $\lim_{N \to \infty}\rho_0^{\eps,N} =X_0^\eps=\delta_{\rho_0^\eps}$. We can then apply Theorem~\ref{thma} to first assert that, for a fixed $t>0$,
\begin{align}
\lim_{N \to \infty}\rho^{\eps,N}(t)= X^\eps(t)= S_t^\eps \# X_0^\eps \, .
\end{align}
Let $X_0=\delta_{\rho_0^*}$ and consider some $\Phi \in \Lip(\cP(\R^d))$. Then we have 
\begin{align}
\int_{\cP(\R^d)} \Phi(\rho)\dx{\bra*{X(t)^\eps - S_t^*\# X_0 }}(\rho) &= \int_{\cP(\R^d)} \Phi(\rho)\dx{\bra*{S_t^\eps\#X_0^\eps - S_t^*\# X_0}}(\rho) \\
&=\Phi(S_t^\eps \rho_0^\eps)-\Phi(S_t^*\rho_0^*) \leq d_2(S_t^\eps \rho_0^\eps,S_t^* \rho_0^*) \stackrel{\eps \to 0}{\to} 0 \, ,
\end{align}
where in the last step we have simply applied~\eqref{weak convergence}, thus completing the proof of the theorem.
\end{proof}

\section{Proof of Theorem~\ref{thm: N then eps}}\label{sec:2thm}
\textit{Strategy of proof.} We first need to pass to the limit in the covariance matrix
\begin{equation}\label{eq:effAN}
    A^{\eff,N}=\beta^{-1}\int_{\big(\T^d\big)^N} (I+\nabla \Psi^N) M_N\;\dx{x} \, .
\end{equation}
To do this, we first pass to the limit limit in the Poisson equation for $\Psi^N:\big(\T^d\big)^N\to\big(\R^d\big)^N$ 
\begin{equation}\label{eq:poisson1}
\nabla\cdot(M_N \nabla \Psi^N)=-\nabla M_N,    
\end{equation}
with
\begin{equation}\label{eq:zeroavg}
\int_{\big(\T^d\big)^N} \Psi^N M_N\;\dx{x}=0.    
\end{equation}
Once we have obtained the limit of the covariance matrix, we then need to pass to the limit in the equation
$$
\partial_t \rho^{N,*}=\nabla\cdot(A^{\eff,N}\nabla\rho^{N,*}).
$$
We do this by testing against cylindrical test functions, that is to say functions that depend on a finite number of variables, which is enough to determine the limit in $\mathcal{P}(\mathcal{P}(\R^d))$.

\vspace{0.2cm}

\textit{Step 1.} We start by showing a few a priori estimates. First, we show that for every $1\le i\le Nd$ and $k\in\{1,...,N\}$ such that $i$ does not belong to the particle $k$ (i.e. $i\notin[(k-1)d+1,kd]$) the solution of the corrector problem \eqref{eq:poisson1} satisfies 
   \begin{equation}\label{aprioriPsi}
    \int_{\big(\T^d\big)^N}|\nabla\Psi_i^N|^2 M_N\;\dx{x}\le 1 \qquad\mbox{and}\qquad \int_{\big(\T^d\big)^N}|\nabla_{x_k}\Psi_i^N|^2 M_N\;\dx{x}\le \frac{1}{N-1},
\end{equation}
where 
$$
|\nabla\Psi_i^N|^2:=\sum_{j=1}^{Nd} |\partial_j \Psi_i^N|^2\qquad \mbox{and}\qquad \nabla_{x_{k}}\Psi_i^N:=\sum_{j=(k-1)d+1}^{kd}\partial_j\Psi_i^N e_{j}.
$$
Testing the $i$-th equation of \eqref{eq:poisson1} against $\Psi_i^N$, applying the Cauchy-Schwarz inequality, and using the fact that $M_N$ has mass one, we obtain
$$
 \int_{\big(\T^d\big)^N} M_N |\partial_i \Psi_i^N|^2\;\dx{x} \le \int_{\big(\T^d\big)^N}|\nabla\Psi_i^N|^2M_N\;\dx{x}= \int_{\big(\T^d\big)^N} M_N \partial_i \Psi_i^N\;\dx{x} \le  \left(\int_{\big(\T^d\big)^N} M_N |\partial_i \Psi_i^N|^2\;\dx{x}\right)^{1/2},
$$
which implies the first inequality in \eqref{aprioriPsi}. The second inequality follows due to the exchangeability of the particles. In fact, for any $k_1,\; k_2\in\{1,...,N\}$ and $i\notin [(k_1-1)d+1, k_1d]\cup [(k_2-1)d+1, k_2d]$ we have that up to exchanging the $k_1$ and $k_2$ particles (i.e. changing  variables)
$$
\nabla_{x_{k_1}}\Psi_i^N=\nabla_{x_{k_2}}\Psi_i^N.
$$
Combining this with the first inequality of \eqref{aprioriPsi} we obtain the second inequality of \eqref{aprioriPsi}.

Next, we show that there exists $C(\beta,\norm{W}_{C^1},\norm{V}_{C^1})$ such that
\begin{equation}\label{aprioriM_N}
    \|(M_N)_1\|_{C^1(\T^d)}\le C\qquad\mbox{and}\qquad C^{-1} \le M_N/M_{N-1}\le C,
\end{equation}
where $M_{N-1}$ is the Gibbs measure associated to the quotiented $(N-1)$-particle system trivially extended to $\big(\T^d\big)^N$ and $(M_N)_1$ is the first marginal of $M_N$. We start by rewriting
\begin{equation}\label{eq:1}
\begin{array}{rl}
\ds M_N&\ds =\frac{e^{-\beta\bra*{\frac{1}{2N}\sum_{i=1}^N\sum_{j\ne i} W(x_i-x_j)+\sum_{i=1}^NV(x_i))}}}{Z_N}\\
&\ds =e^{-\beta\bra*{\frac{1}{N}\sum_{j=1}^NW(x_1-x_j)-\frac{1}{2N(N-1)}\sum_{i,j}^NW(x_i-x_j)+V(x_1))}} M_{N-1}\frac{Z_{N-1}}{Z_{N}}.    
\end{array}
\end{equation}
Differentiating the previous expression with respect to $x_1$ we obtain
\begin{equation}\label{eq:2}
\begin{array}{rl}
\ds \nabla_{x_1} M_N =&\ds -\beta\left(\left(\frac{1}{N}-\frac{1}{N(N-1)}\right)\sum_{j=1}^N\nabla W(x_1-x_j)+\nabla V(x_1)\right)\\
&\ds\qquad\qquad e^{-\beta\bra*{\frac{1}{N}\sum_{j=1}^NW(x_1-x_j)-\frac{1}{2N(N-1)}\sum_{i,j}^NW(x_i-x_j)+V(x_1))}} M_{N-1}\frac{Z_{N-1}}{Z_{N}}.    
\end{array}
\end{equation}
By \eqref{eq:1} and \eqref{eq:2} and the fact that $V$ and $W$ are sufficiently regular, the desired estimates follows if we can show that $Z_{N-1}/Z_{N}$ is bounded above and below. This follows from the following estimate
$$
\begin{array}{rl}
\ds Z_N &\ds = \int_{\big(\T^d\big)^N}e^{-\beta\bra*{\frac{1}{2N}\sum_{i=1}^N\sum_{j\ne i} W(y_i-y_j)+ \sum_{i=1}^N V(y_i))}}\;\dx{y} \\
&\ds \ge e^{-\beta\bra*{\bra*{\frac{N}{2(N-1)}+1}\norm{W}_{\infty} +\norm{V}_\infty}}\int_{\big(\T^d\big)^N}e^{-\beta\bra*{\frac{1}{2(N-1)}\sum_{i=2}^N\sum_{j\ne i} W(y_i-y_j)+ \sum_{i=2}^N V(y_i)}}\;\dx{y}\\
&\ds \ge C^{-1}Z_{N-1}
\end{array}
$$
and its analogue for the reverse bound.

\vspace{0.2cm}

\textit{Step 2.} Next we show that we can in a suitable sense decompose $M_N$ by the product $(M_N)_1 M_{N-1}$. To be precise, we show that for every $x_1\in \T^d$
\begin{equation}\label{eq:decomposition}
    \lim_{N\to \infty} \int_{\big(\T^d\big)^{N-1}} \left(\frac{M_N}{M_{N-1}}-(M_N)_1\right)^2M_{N-1}\;\dx{x}_2...\dx{x_N} = 0.
\end{equation}
We notice that
$$
(M_N)_1=\int_{\big(\T^d\big)^{N-1}}M_N\;\dx{x}_2...\dx{x_N}=\int_{\big(\T^d\big)^{N-1}}\frac{M_N}{M_{N-1}}M_{N-1}\;\dx{x}_2...\dx{x_N},
$$
and we rewrite
\begin{equation}
\begin{array}{rl}
&\ds \int_{\big(\T^d\big)^{N-1}} \left(\frac{M_N}{M_{N-1}}-(M_N)_1\right)^2M_{N-1}\\&\ds=\left|\frac{Z_{N-1}}{Z_{N}}\right|^2\int_{\big(\T^d\big)^{N-1}} \left(u^N-\int_{\big(\T^d\big)^{N-1}} u^NM_{N-1}\right)^2M_{N-1}\\
&\ds = \left|\frac{Z_{N-1}}{Z_{N}}\right|^2\left(\int_{\big(\T^d\big)^{N-1}} (u^N)^2M_{N-1}-\left(\int_{\big(\T^d\big)^{N-1}} u^NM_{N-1}\right)^2\right),
\end{array}
\end{equation}
where the function $u^N:\T^d\times\big(\T^d\big)^{N-1}\to \R$ is given by
$$
u^N=\frac{Z_N}{Z_{N-1}}\frac{M_N}{M_{N-1}}=e^{-\beta\bra*{\frac{1}{N}\sum_{j=2}^NW(x_1-x_j)-\frac{1}{2N(N-1)}\sum_{i,j=2}^NW(x_i-x_j)+V(x_1))}}.
$$
Therefore, by \eqref{aprioriM_N}, we can show \eqref{eq:decomposition} by showing that
\begin{equation}\label{eq:intermsofuN}
\lim_{N\to\infty}\int_{\big(\T^d\big)^{N-1}} (u^N)^2M_{N-1}\;\dx{x}_2...\dx{x_N}-\left(\int_{\big(\T^d\big)^{N-1}} u^NM_{N-1}\;\dx{x}_2...\dx{x_N}\right)^2=0.
\end{equation}
This will follow from the chaoticity assumption on $M_N$ and a version of the Arzela--Ascoli theorem for the limit of symmetric functions, where we employ an idea that was proposed by Lions \cite{lions2007mean} in the context of mean field games (cf.~\cite{cardialaguetnotes,Gol13}). 

We show that the sequence of functions $\{u^N\}_{N\in\N}$ induces a compact sequence $\{U^N\}_{N\in\N}\subset C(\T^d\times \mathcal{P}(\T^d))$ and that \eqref{eq:intermsofuN} can be written in terms of the limit of $U^N$. We start by noticing that $u_N$ is continuous and symmetric in the variables $x_2$ through $x_N$ such that there exists $C(\beta,W,V)$ such that
\begin{equation}
\norm{u^N}_{\Leb^{\infty}((\T^d)^N)}\le C,\qquad|\nabla_{x_1} u^N|\le C \qquad\mbox{and}\qquad |\nabla_{x_j} u^N|\le \frac{C}{N} \quad x_j \neq x_1 \, .    
\end{equation}
Using the symmetry of $u^N$ and the previous bound, we can estimate
\begin{equation}
\begin{array}{rl}
\ds    |u^N(x_1,x_2,...,x_N)-u^N(y_1,...,y_N)|&\ds =|u^N(x_1,x_{\sigma(2)},...,x_{\sigma(N)})-u^N(y_1,...,y_N)|\\
& \ds\le C d_{\T^d}(x_1,y_1)+\frac{C}{N}\sum_{i=2}^N d_{\T^d}(x_{\sigma(i)},y_i),
\end{array}
\end{equation}
with $\sigma$ an arbitrary permutation of the indices $\{2,3,...,N\}$. Taking the infimum over $\sigma$, we obtain that
\begin{equation}\label{eq:moduN}
    |u^N(x_1,x_2,...,x_N)-u^N(y_1,...,y_N)|\le C\left(|x_1-y_1|+ d_1\left(\frac{1}{N-1}\sum_{i=2}^N\delta_{x_i},\frac{1}{N-1}\sum_{i=2}^N\delta_{y_i}\right)\right),
\end{equation}
where $d_1$ denotes the 1-Wasserstein distance on $\mathcal{P}(\T^d)$. For any $(x_1,\mu)\in \T^d\times\mathcal{P}(\T^d)$, we define
\begin{equation}\label{eq:defUN}
U^N(x_1,\mu):=\inf_{(z_2,..., z_N)\in \big(\T^d\big)^{N-1}} 2C d_1\left(\mu, \frac{1}{N-1}\sum_{i=2}^N \delta_{z_i}\right)+ u^N(x_1,z_2,...,z_N) \in C^0(\T^d \times \cP(\T^d)).    
\end{equation}
It follows directly form \eqref{eq:moduN} that
\begin{equation}\label{eq:UN=uN}
    U^N\left(x_1,\frac{1}{N-1}\sum_{i=2}^N\delta_{x_i}\right)=  u^N(x_1,x_2,...,x_N).
\end{equation}
Using \eqref{eq:UN=uN} we can rewrite \eqref{eq:intermsofuN} as
\begin{equation}\label{eq:intermsofUN}
\lim_{N\to\infty}\int_{\mathcal{P}(\T^d)} (U^N)^2(x_1,\rho)\;d\hat{M}_{N-1}(\rho)-\left(\int_{\mathcal{P}(\T^d)} U^N(x_1,\rho)\;d\hat{M}_{N-1}(\rho)\right)^2=0,   
\end{equation}
\noindent where $\hat{M}_{N-1}\in \mathcal{P}(\mathcal{P}(\T^d))$ is the empirical measure associated with $M_{N-1}$ as defined in Definition~\ref{def:empirical}.

Next, we show that $U^N$ is Lipschitz with respect to the 1-Wasserstein distance, i.e. 
\begin{equation}\label{eq:LipUN}
    |U^N(x_1,\mu)-U^N(y_1,\nu)|\le 2C(d_{\T^d}(x_1,y_1)+d_1(\mu,\nu)).
\end{equation}
By the definition of $U^N(y_1,\nu)$, for every $\delta>0$ there exists $(z_2,...,z_N)$ such that
$$
U^N(y_1,\nu)+\delta  \ge 2C d_1\left(\nu, \frac{1}{N-1}\sum_{i=2}^N \delta_{z_i}\right)+ u^N(y_1,z_2,...,z_N).
$$
By the definition of $U^N(x_1,\mu)$, the Lipschitz property of $u^N$ \eqref{eq:moduN}, and the triangle inequality for $d_1$, we have
$$
\begin{array}{rl}
\ds U^N(x_1,\mu) &\ds\le 2C d_1\left(\mu, \frac{1}{N-1}\sum_{i=2}^N \delta_{z_i}\right)+ u^N(x_1,z_2,...,z_N)\\
&\ds \le Cd_{\T^d}(x_1,y_1)+ 2C d_1\left(\mu, \frac{1}{N-1}\sum_{i=2}^N \delta_{z_i}\right)+ u^N(y_1,z_2,...,z_N)\\
&\ds \le Cd_{\T^d}(x_1,y_1)+ 2C \left(d_1\left(\mu, \frac{1}{N-1}\sum_{i=2}^N \delta_{z_i}\right)-d_1\left(\nu, \frac{1}{N-1}\sum_{i=2}^N \delta_{z_i}\right)\right)+ U^N(y_1,\nu)+\delta\\
&\ds \le Cd_{\T^d}(x_1,y_1)+ 2Cd_1(\mu,\nu)+ U^N(y_1,\nu)+\delta.
\end{array}
$$
Using the fact that $\delta>0$, $(x_1,\mu)$ and $(y_1,\nu)$ are arbitrary, \eqref{eq:LipUN} follows. Due to the compactness of $\T^d$, the space $\T^d\times \mathcal{P}(\T^d)$ equipped with the metric $d_{\T^d}+d_1$ is also compact. Therefore, by the Arzela--Ascoli theorem and the uniform Lipschitz bound in~\eqref{eq:LipUN}, we have that, up to subsequence, there exists $U\in C^0(\T^d\times \mathcal{P}(\T^d))$ such that
\begin{equation}
    \lim_{N\to\infty}\|U^N-U\|_{C^0(\T^d\times \mathcal{P}(\T^d))}=0.
\end{equation}

Finally, we use the assumption that $\hat{M}_{N-1}\to \delta_{\tilde{\nu}^{\min}}\in\mathcal{P}(\mathcal{P}(\T^d))$ (see Remark~\ref{rmk:chaoticity}), to obtain that, up to subsequence,
$$
\begin{array}{l}
\ds \lim_{N\to\infty}\int_{\mathcal{P}(\T^d)} (U^N)^2(x_1,\rho)\;d\hat{M}_{N-1}(\rho)-\left(\int_{\mathcal{P}(\T^d)} U^N(x_1,\rho)\;d\hat{M}_{N-1}(\rho)\right)^2\\
\qquad\qquad\ds=\int_{\mathcal{P}(\T^d)} U^2(x_1,\rho)\;d\delta_{\tilde{\nu}^{\min}}(\rho)-\left(\int_{\mathcal{P}(\T^d)} U(x_1,\rho)\;d\delta_{\tilde{\nu}^{\min}}(\rho)\right)^2 \\
\qquad\qquad\ds= U^2(x_1,\tilde{\nu}^{\min})-U^2(x_1,\tilde{\nu}^{\min})\\
\qquad\qquad\ds=0.
\end{array}
$$
As the limit is independent of the subsequence we have chosen, we obtain \eqref{eq:intermsofUN}, which implies \eqref{eq:decomposition}.

\vspace{0.2cm}

\textit{Step 3.} Now we are ready to pass to the limit in the Poisson equation \eqref{eq:poisson1}. As the dimension where the problem is posed grows, we consider test functions that depend on a finite number of variables. We take some $\varphi\in [C^1(\T^d)]^d$ and consider its trivial extension to  $\big(\T^d\big)^N$ to test the first $d$ equations in \eqref{eq:poisson1}:
\begin{equation}\label{eq:test1}
\int_{\big(\T^d\big)^N}M_N(x) \nabla \Psi^N(x): \nabla_{x_1}\varphi \;\dx{x}=\int_{\big(\T^d\big)^N} M_N(x)\nabla_{x_1}\cdot \varphi(x_1)\;\dx{x},    
\end{equation}
where $\nabla \Psi^N(x): \nabla_{x_1}\varphi$ denotes the inner product between matrices and we notice that $\nabla_{x_1} \varphi$ has non-trivial entries only for $1\le i,$ $j\le d$. Integrating the variables $x_2$ to $x_N$ in the right hand side of \eqref{eq:test1} we obtain
\begin{equation}\label{eq:test2}
\int_{\big(\T^d\big)^N} M_N(x)\nabla_{x_1}\cdot \varphi(x_1)\;\dx{x}=\int_{\T^d} (M_N)_1(x_1)\nabla_{x_1}\cdot \varphi(x_1)\;\dx{x}_1,
\end{equation}
where $(M_N)_1$ is the first marginal of $M_N$. 

For the left hand side of \eqref{eq:test1}, we notice that by \textit{Step 1.} \eqref{aprioriPsi} and \textit{Step 2.} \eqref{eq:decomposition} we can exchange $M_N$ in the integrand by the product $M_{N-1}(M_N)_1$
\begin{equation}\label{eq:test3}
\begin{array}{l}
\ds \left|\int_{\big(\T^d\big)^N}(M_N-M_1 M_{N-1}) \nabla_{x_1} \Psi^N: \nabla_{x_1}\varphi\;\dx{x}\right|\\
\qquad\ds\le \|\varphi\|_{C^1}\sum_{i=1}^d \int_{\big(\T^d\big)^N}\left(\frac{M_N}{M_{N-1}}-M_1 \right) |\nabla_{x_1} \Psi_i^N|M_{N-1}\;\dx{x}  \\
\qquad\ds\le \|\varphi\|_{C^1}\sum_{i=1}^d \|\nabla_{x_1} \Psi_i^N|\|_{L^2(M_{N-1})} \int_{\T^d}\left(\int_{\big(\T^d\big)^{N-1}} \left(\frac{M_N}{M_{N-1}}-(M_N)_1\right)^2M_{N-1}\;\dx{x}_2...\dx{x_N}\right)\;\dx{x}_1,\\
\qquad\ds=o(1)\stackrel{N\to\infty}{\to}0,
\end{array}
\end{equation}
where in the last equality we have used \eqref{eq:decomposition} and that 
$$
\sup_{x_1 \in \T^d}\left|\int_{\big(\T^d\big)^{N-1}} \left(\frac{M_N}{M_{N-1}}-(M_N)_1\right)^2M_{N-1}\;\dx{x}_2...\dx{x_N}\right|<C \, ,
$$
independently of $N$ to be able to apply Lebesgue dominated convergence to pass to the limit in the outer integral.

Hence, putting together \eqref{eq:test1}, \eqref{eq:test2} and \eqref{eq:test3} we obtain
\begin{equation}\label{eq:poisson2}
\int_{\T^d}(M_N)_1 \nabla_{x_1} \left(\int_{\big(\T^d\big)^{N-1}}\Psi^N M_{N-1}\;\dx{x}_2...\dx{x_N}\right): \nabla_{x_1} \varphi\;\dx{x}_1=-\int_{\T^d} (M_N)_1\nabla_{x_1}\cdot \varphi\;\dx{x}_1+o(1).
\end{equation}
To pass to the limit in \eqref{eq:poisson2}, we use of the a priori estimates which we proved in \textit{Step 1.}, \eqref{aprioriPsi} and \eqref{aprioriM_N}, which say that there exists $C>0$ such that for every $N\in \N$ and $i\le d$ we have  
$$
\norm*{\int_{\big(\T^d\big)^{N-1}}\Psi^N_i M_{N-1}\;\dx{x}_2...\dx{x_N}}_{H^1(\T^d)}\le C\qquad\mbox{and}\qquad \|(M_N)_1\|_{C^1(\T^d)}\le C.
$$
Note that we have used the Poincar\'e inequality on $\T^d$ to extend the gradient bound from~\eqref{aprioriPsi} to an $\SobH^1(\T^d)$ bound uniform in $N$. Moreover, by the chaoticity assumption $M_N\to \delta_{\tilde{\nu}^{\min}}$ (see Remark~\ref{rmk:chaoticity}) we can conclude that as $N\to\infty$ we have $(M_N)_1\to \tilde{\nu}^{\min}$ in $C^0(\T^d)$. Then, passing to the limit in \eqref{eq:poisson2} we can see that any weak-$H^1$ accumulation point $\Psi^{\min}\in [H^1(\T^d)]^d$ of   the function
$$
\left(\int_{\big(\T^d\big)^{N-1}}\Psi^N_1M_{N-1}\;\dx{x}_2...\dx{x_N},...,\int_{\big(\T^d\big)^{N-1}}\Psi^N_dM_{N-1}\;\dx{x}_2...\dx{x_N}\right) \, ,
$$
satisfies the equation
$$
\int_{\T^d} \tilde{\nu}^{\min} \nabla_{x_1}\Psi^{\min} : \nabla_{x_1}\varphi\;\dx{x}_1=-\int_{\T^d} \tilde{\nu}^{\min}\nabla_{x_1}\cdot \varphi\;\dx{x}_1
$$
with the condition
$$
\int_{\T^d} \Psi^{\min} \tilde{\nu}^{\min}\;\dx{x}_1=0,
$$
which follows from passing to the limit in \eqref{eq:zeroavg} in the same fashion as above. This uniquely determines the limit $\Psi^{\min}$. Therefore, up to exchanging the coordinates, we can pass to the limit in the diagonal of \eqref{eq:effAN}. That is to say for every set of indices $i$, $j$ satisfying $(k-1)d\le i,\;j\le kd$, we have
\begin{equation}\label{eq:limAeff1}
    A^{\eff,N}_{i,j}\to \int_{\T^d} \beta^{-1} \bra*{\delta_{i,j} + \partial_{\tilde{j}} \Psi_{\tilde{i}}^{\min}}\;\dx{\tilde{\nu}^{\min}},
\end{equation}
where $\tilde{i}$ and $\tilde{j}$ are respectively $i$ and $j$ modulo d. We also notice that using the a priori estimate \eqref{aprioriPsi}, we have that for every pair of indices $i$ and $j$ satisfying $(k_1-1)d\le i\le k_1d$ and $(k_2-1)d\le j\le k_2d$ with $k_1\ne k_2$
\begin{equation}\label{eq:limAeff2}
    |A^{\eff,N}_{i,j}|\le \frac{1}{N-1} \stackrel{N \to \infty}{\to}0 \,.
\end{equation}

\vspace{0.2cm}

\textit{Step 4.} Finally, we show that we can pass to the limit in the equation
\begin{equation}\label{eq:rhoN}
\pa_t \rho^{N,*}=\nabla\cdot (A^{\eff,N} \nabla\rho^{N,*})     \qquad\mbox{on $(0,\infty)\times \big(\R^d\big)^N$} \,.
\end{equation}
We consider a test function $\varphi\in C^2\big((\R^d)^{n}\big)$ and extend it trivially to $C^2\big((\R^d)^{N}\big)$. Testing \eqref{eq:rhoN} against $\varphi$ we obtain that for every $t>0$
\begin{equation}
    \int_{(\R^d)^{N}}\varphi \rho^{N}(t)\;\dx{x}-\int_{(\R^d)^{N}}\varphi \rho^{N}(0)\;\dx{x}=\int_0^t\int_{(\R^d)^{N}} \nabla\cdot (A^{\eff,N} \nabla\varphi)\rho^{N,*}(s)\;\dx{x}\dx{s}. 
\end{equation}
Next, we use that $\varphi$ only depends on the first $nd$ variables to obtain
\begin{equation}\label{eq:testn}
    \int_{(\R^d)^{n}}\varphi \rho_n^{N}(t)\;\dx{z}-\int_{(\R^d)^{n}}\varphi \rho_n^{N}(0)\;\dx{z}=\int_0^t\int_{(\R^d)^{n}} \nabla\cdot ([A^{\eff,N}]_{1\le i,j\le nd} \nabla\varphi)\rho_n^N(s)\;\dx{z}\dx{s}, 
\end{equation}
where $\rho_n^N\in\mathcal{P}((\R^d)^n)$ is the $n$-th marginal of $\rho^{N,*}$ and $[A^{\eff,N}]_{1\le i,j\le nd}\in \R^{nd\times nd}$ is the first $nd\times nd$ coordinates of $A^{\eff,N}$. Therefore, $\rho_n^N$ is a weak solution of
\begin{equation}
    \pa_t \rho^{N,*}_n=\nabla\cdot ([A^{\eff,N}]_{1\le i,j\le nd}\nabla\rho_n^N)     \qquad\mbox{on $(0,\infty)\times \big(\R^d\big)^n$.}
\end{equation}
By \textit{Step 3.}, we note that 
$$
[A^{\eff,N}]_{1\le i,j\le nd}\to [A^{\infty,\eff}_{\min}]_{1\le i,j\le nd}
$$
where $A^{\infty,\eff}_{\min}\in \R^{\infty\times\infty}$ denotes the matrix which if considered in $d\times d$ blocks is diagonal, which has the constant matrix
\begin{equation}
    A^{\eff}_{\min}=\beta^{-1}\int_{\T^d} ( I + \nabla \Psi^{\min})\;\dx{\tilde{\nu}^{\min}}\in \R^{d\times d},
\end{equation}
which is non-degenerate elliptic by Remark~\ref{rem:nondegenerate}. Therefore, for $N$ large enough $[A^{\eff,N}]_{1\le i,j\le nd}$ is uniformly elliptic and we can use standard parabolic techniques to obtain compactness of the curve $\rho^{N,*}_n$ in $C([0,T];\mathcal{P}((\R^d)^n))$. By \textit{Step 3.}, we can use \eqref{eq:limAeff1} and \eqref{eq:limAeff2} to pass to the limit \eqref{eq:testn} and obtain
\begin{equation}\label{eq:inftyn}
 \int_{(\R^d)^{n}}\varphi \rho_n^{\infty}(t)\;\dx{z}-\int_{(\R^d)^{n}}\varphi \rho_n^{\infty}(0)\;\dx{z}=\int_0^t\int_{(\R^d)^{n}} \nabla\cdot ([A^{\infty,\eff}_{\min}]_{1\le i,j\le nd} \nabla\varphi)\rho_n^\infty(s)\;\dx{z}\dx{s}.
\end{equation}

Equation \eqref{eq:inftyn} completely characterises $\lim_{N\to\infty}\rho^{N,*}_n$. In particular, we notice that
$$
\rho_n^\infty=(S_t^{\min}\#X_0)^{\otimes n},
$$
where $S_t^{\min}:\mathcal{P}(\R^d)\to\mathcal{P}(\R^d)$ is the solution semigroup associated to
$$
\partial_t\rho=\nabla\cdot(A^{\eff}_{\min}\nabla\rho)\qquad\mbox{on $(0,\infty)\times \R^d$.}
$$
As the marginals characterise $\lim_{N\to\infty}\rho^{N,*}(t)=X(t)\in\mathcal{P}(\mathcal{P}(\R^d))$ (cf. \cite[Lemma 3]{carrillo2019proof}), we obtain the desired result
$$
X(t)=S_t^{\min}\#X_0.
$$
Combining this with Theorem~\ref{diffusivelimit}, we have that, for a fixed $t>0$, the solution $\rho^{\eps,N}(t)$ of~\eqref{eq:LinearKolmogorov} satisfies
\begin{align}
\lim_{N \to \infty}\lim_{\eps \to 0} \rho^{\eps,N}(t)= \lim_{N \to \infty}\rho^{N,*}(t) = X(t)= S_t^{\min}\#X_0 \, .
\end{align}

\section{Proofs of Section~\ref{S:explicit}}\label{sec:phase}
In this section we include the proofs of Corollary~\ref{cor:2}, Corollary~\ref{cor:1} and Lemma~\ref{phase transition}.

\begin{proof}[Proof of Corollary~\ref{cor:2}]
The proof follows by combining Corollary~\ref{cor:beforephasetransition} and Lemma~\ref{Uniformexponentialconvergence}. Indeed, we can first apply Theorem~\ref{thm:variabledata} which gives us:
\begin{align}
\lim_{N \to \infty}\lim_{\eps \to 0} \rho^{\eps,N} = S_t^* \# X_0 \, .
\end{align}
However, from Lemma~\ref{Uniformexponentialconvergence}, we know that~\eqref{periodiceps=1} can have only one steady state. But Propositions~\ref{tfae} and~\ref{uniqueness} tell us that steady states must be minimisers and minimisers always exist. Thus, for $\beta \in (0,\beta_0]$, we have that $\tilde{\nu}^*=\tilde{\nu}^{\min}$, the unique minimiser. It follows that:
\begin{align}
\lim_{N \to \infty}\lim_{\eps \to 0} \rho^{\eps,N} = S_t^* \# X_0=S_t^{\min}\# X_0 \, .
\end{align}
The limit the other way around follows by applying Theorem~\ref{thm: N then eps} and using the fact that $\tilde{E}_{MF}$ has a unique minimiser.
\end{proof}

\begin{proof}[Proof of Corollary~\ref{cor:1}]
Since $\lim_{\eps \to 0} \eps^{-d}\rho_0(\eps^{-1}x)= \delta_0 \in \cP(\R^d)$, one can check that
\begin{align}
\lim_{N \to \infty} \lim_{\eps \to 0} \rho_0^{\eps,N}= \lim_{\eps \to 0}\lim_{N \to \infty} \rho_0^{\eps,N}=\delta_{\delta_0}=X_0 \, .
\end{align}
The proof of the limit $\eps \to 0$ followed by $N \to \infty$ follows by simply applying Theorem~\ref{thm: N then eps} and using the fact that for $\beta<\beta_c$, $\tilde{E}_{MF}$ has a unique minimiser $\tilde{\nu}^{\min}$(cf. Definition~\ref{pt} and Proposition~\ref{tfae}). 

For the other limit, since the initial data $\rho_0^{\eps,N}$ is rapidly varying, the corresponding initial data for~\eqref{eps=1} is precisely $\rho_0^{\otimes N}$ and is independent of $\eps>0$.  Thus we need to show that~\eqref{exponential convergence} holds for some fixed initial data $\tilde{\nu}_0 \in \cP(\T^d)$ independent of $\eps>0$. Here $\tilde{\nu}_0$ is the periodic rearrangement of $\rho_0$.
We will prove this by using the fact that if $\beta<\beta_c$, then~\eqref{periodiceps=1} has a unique steady state, namely $\tilde{\nu}_\infty \equiv 1 $. This follows simply from our definition of a phase transition and by plugging $\tilde{\nu}_\infty$
into the right hand side of~\eqref{periodiceps=1}. We divide the proof into two steps. In~\textit{Step 1}, we show that solutions of~\eqref{periodiceps=1} enjoy certain compactness properties and converge to $\tilde{\nu}_\infty$ along some time-divergent subsequences.  In~\textit{Step 2}, we will show that if $\tilde{\nu}_0$ is close to $\tilde{\nu}_\infty$ in an appropriate topology, then this convergence happens exponentially fast and along the whole trajectory, if $\beta<\beta_c$. Combining these together will then establish~\eqref{exponential convergence}.

\textit{Step 1.} By parabolic regularity theory, for any positive time $t>0$ the solution $\tilde{\nu}(t)$ of~\eqref{periodiceps=1} is smooth for any positive time. Thus we can assume without loss of generality that $\tilde{\nu}_0 \in C^\infty(\T^d)$. Furthermore, as discussed in Section~\ref{gradientflow},~\eqref{periodiceps=1} is a gradient flow of $\tilde{E}_{MF}$ with respect to the 2-Wasserstein metric $d_2$ on $\cP(\T^d)$. It follows from~\cite[Theorem 11.1.3]{ambrosio2008gradient}, that we have the following energy-dissipation identity along solutions of~\eqref{periodiceps=1}:
\begin{align}
\frac{\dx{}}{\dx{t}} \tilde{E}_{MF}[\tilde{\nu}(t)]=-D(\tilde{\nu}(t))=- \intt{\abs*{\nabla \log\frac{\tilde{\nu(t)}}{e^{-\beta W \ast \tilde{\nu}(t)}}}^2 \tilde{\nu}} \, .
\end{align}
Integrating from $0$ to $\infty$ and using the fact that the periodic mean field free energy $\tilde{E}_{MF}$ is bounded below,
we obtain:
\begin{align}
\int_0^\infty \intt{\abs*{\nabla \log\frac{\tilde{\nu(t)}}{e^{-\beta W \ast \tilde{\nu}(t)}}}^2 \tilde{\nu(t)}} \,  \dx{t} \leq C \, ,
\end{align}
for some constant $C>0$. Thus, there must exist a sequence of times $t_n \to \infty$ such that
\begin{align}
\lim_{n \to \infty} \intt{\abs*{\nabla \log\frac{\tilde{\nu(t_n)}}{e^{-\beta W \ast \tilde{\nu}(t_n)}}}^2 \tilde{\nu(t_n)}} =0 \, .
\end{align}
Since $\abs{\nabla W \ast \tilde{\nu}(t)} \leq \norm{\nabla W}_{\Leb^\infty(\T^d)}$, the above limit implies the following bound along the sequence $t_n$:
\begin{align}
\intt{\abs*{\nabla \sqrt{\tilde{\nu}(t_n)}}^2} = \intt{\abs*{\nabla \log \tilde{\nu}(t)}^2 \tilde{\nu}(t_n)}  \leq C \, .
\end{align}
Using the fact that $\tilde{\nu}(t) \in \cP(\T^d)$, we have that $\norm*{\sqrt{\tilde{\nu}(t_n)}}_{\SobH^1(\T^d)} \leq C$.  Thus, there exists a subsequence of times $t_{n_k}$ and a function $f \in \SobH^1(\T^d)$ such that
\begin{align}
\sqrt{\tilde{\nu}(t_{n_k})} \stackrel{k \to \infty}{\to} f \quad \textrm{strongly in }\Leb^2(\T^d), \textrm{ weakly in } \SobH^1(\T^d) \, .
\end{align}
 Furthermore, we have that
\begin{align}
\norm*{\tilde{\nu}(t_{n_k})- f^2}_{\Leb^1(\T^d)} &= \norm*{\bra*{\sqrt{\tilde{\nu}(t_{n_k})} + f }\bra*{\sqrt{\tilde{\nu}(t_{n_k})} - f}}_{\Leb^1(\T^d)}\\
& \leq \norm*{\sqrt{\tilde{\nu}(t_{n_k})} + f }_{\Leb^2(\T^d)} \norm*{\sqrt{\tilde{\nu}(t_{n_k})} - f }_{\Leb^2(\T^d)} \\&\leq   \norm*{ f }_{\Leb^2(\T^d)} \norm*{\sqrt{\tilde{\nu}(t_{n_k})} - f }_{\Leb^2(\T^d)} \stackrel{k \to \infty}{\to}0 \, .
\end{align}
Thus, $\intt{f^2}=1, f^2 \geq 0$, and thus $f^2 \in \cP(\T^d)$. One can also check that the dissipation is lower semicontinuous with respect to
$\Leb^1$ convergence. Thus 
\begin{align}
 \intt{\abs*{\nabla \log\frac{\tilde{f^2}}{e^{-\beta W \ast f^2}}}^2 f^2} \leq \liminf_{k \to \infty} \intt{\abs*{\nabla \log\frac{\tilde{\nu(t_{n_k})}}{e^{-\beta W \ast \tilde{\nu}(t_{n_k})}}}^2 \tilde{\nu(t_{n_k})}}=0 \,.
\end{align}
It follows then that $D(f^2)=0$ and from Proposition~\ref{tfae}, that $f^2 \in \cP(\T^d)$ is a steady state of~\eqref{periodiceps=1}. Since $\tilde{\nu}_\infty$ is the only stationary solution for $\beta<\beta_c$, it must hold that $f^2 = \tilde{\nu}_\infty$ and that
\begin{align}
\lim_{k \to \infty}\norm*{\tilde{\nu}_{t_{n_k}}-\tilde{\nu}_\infty}_{\Leb^1(\T^d)}=0 \, .
\end{align} 

\textit{Step 2.}We now use~\cite[Theorem 2.11]{CP10} which tells us that if $\beta<\beta_*:=-(\min_{k} \hat{W}(k))^{-1}$ and $\norm{\tilde{\nu}_0-\tilde{\nu}_\infty}_{\Leb^1\bra{\T^d}}< \eps_0$, then
\begin{align}
\norm{\tilde{\nu}(t)-\tilde{\nu}_\infty}_{\Leb^1\bra{\T^d}} \leq \norm{\tilde{\nu}_0-\tilde{\nu}_\infty}_{\Leb^1\bra{\T^d}}e^{-Ct} \, ,
\end{align}
for some $C>0$, $\eps_0>0$, and all $t \geq 0$.  Since we know from the previous step that $\lim_{k \to \infty}\norm{\tilde{\nu}(t_{n_k})-\tilde{\nu}_\infty} = 0$, there must exist some time $T>0$ such that $\norm{\tilde{\nu}(T)-\tilde{\nu}_\infty}_{\Leb^1\bra{\T^d}}< \eps_0$. We also know from~\cite[Proposition 5.3]{CGPS19} that $\beta_c\leq \beta_*$. Thus for all $\beta<\beta_c$, we have that 
\begin{align}
\norm{\tilde{\nu}(t)-\tilde{\nu}_\infty}_{\Leb^1\bra{\T^d}} \leq C_T e^{-C(t-T)} \, ,
\end{align}
where $C_T:= \max_{s \in [0,T]}\norm{\tilde{\nu}(s)-\tilde{\nu}_\infty}_{\Leb^1\bra{\T^d}} \leq 2$. Thus, we have shown that~\eqref{exponential convergence} holds, completing the proof of the first part of the result.

We remind the reader that $W \in \mathbf{H}_s$ means that $\hat{W}(k)\geq 0$ for all $k \in \Z^d$. For the second half of the result, we use the fact $W \in \mathbf{H}_s$ implies, by Proposition~\ref{expt}, that $\beta_c= +\infty$ and thus the result of the corollary necessarily holds for all $\beta<+\infty$ and rapidly varying initial data. We now sketch how to extend the result to all chaotic initial data. We note that by applying Duhamel's formula for the solution of~\eqref{periodiceps=1} one can show that there exists a time, say $t'=1>0$, such that for all initial data $\mathcal{H}(\tilde{\nu}(1)|\tilde{\nu}_\infty)<C$ for some fixed constant $C \geq 0$. Additionally, we can apply~\cite[Proposition 3.1]{CGPS19}, to assert that for $W \in \mathbf{H}_s$ and all $\beta<\infty$ , we have that 
\begin{align}
\mathcal{H}(\tilde{\nu}(t)|\tilde{\nu}_\infty) \leq \mathcal{H}(\tilde{\nu}(1)|\tilde{\nu}_\infty) e^{-C_1(t-1)} \, ,
\end{align}
for all $t \geq 1$. Since the relative entropy controls the $2$-Wasserstein distance, we can apply Corollary~\ref{cor:beforephasetransition} to complete the proof of the result.
\end{proof}

\begin{proof}[Proof of Lemma~\ref{phase transition}]
We know from Proposition~\ref{tfae} that steady states of the quotiented periodic system~\eqref{periodiceps=1} are equivalent to solutions of the self-consistency equation~\eqref{eq:critical point}, which we rewrite as
\begin{align}
    \tilde{\nu} =\frac{e^{-\beta (V + W\ast\tilde{\nu})}}{Z}, \quad Z= \intto{e^{-\beta (V + W\ast\tilde{\nu})}} \, .   
    \label{cp2}  
\end{align}
We also know from Proposition~\ref{uniqueness} that for $\beta$ sufficiently small the map in the above expression has a unique fixed point. Thus~\eqref{periodiceps=1} has a unique steady state for $\beta$ sufficiently small. Since minimisers of $\tilde{E}_{MF}$ exist and are always steady states (cf. Propositions~\ref{uniqueness} and~\ref{tfae}), it must also be the unique minimiser of $\tilde{E}_{MF}$. We argue further that any minimiser of $\tilde{E}_{MF}$ must be symmetric about $x=1/2$ and decreasing from $0$ to $1/2$. This follows directly from the Baernstein--Taylor inequality for spherical rearrangements of functions~\cite{BT76}. 

To investigate the problem ahead of the phase transition, we  consider ~\eqref{cp2}. Plugging our choice of $V$ and $W$ and testing against $\cos(2 \pi x)$ we can simplify this to
\begin{align}
\tilde{\nu}_1 =Z^{-1} \intt{\cos(2 \pi x) \exp\bra*{\beta\bra*{\cos(2 \pi x)(\eta + \tilde{\nu}_1 ) + \sin(2 \pi x) \tilde{\nu}_{-1}}}} \, , 
\end{align}
where $\tilde{\nu}_1= \skp{\tilde{\nu}, \cos(2 \pi x)}$ and $\tilde{\nu}_{-1}= \skp{\tilde{\nu}, \sin(2 \pi x)}$. Let us consider the problem when $\tilde{\nu}_{-1}=0$, as this corresponds to the setting when $\tilde{\nu}$ is symmetric about $x=1/2$. Simplifying further we obtain:
\begin{align}
\tilde{\nu}_1 =Z^{-1} \intt{\cos(2 \pi x) \exp\bra*{\beta\bra*{\cos(2 \pi x)(\eta + \tilde{\nu}_1 ) }}} \, . 
\end{align}
Using the fact the modified Bessel functions of the first kind can be expressed as $I_n(y)=\intto{\cos(2 \pi n x)e^{y \cos(2 \pi x)}}$, we obtain:
\begin{align}
\tilde{\nu}_1=r_0\bra*{\beta(\eta+ \tilde{\nu}_1)}
\end{align}
where $r_0(x):=I_1(x)/I_0(x)$, and $I_1$,$I_0$ are first and zeroth modified Bessel functions of the first kind. Setting $\beta(\eta + \tilde{\nu}_1)=a$ we simplify the above expression to
\begin{align}
a= \beta(\eta + r_0(a)) \, .
\label{simplified}
\end{align}
The function $r_0(a)$  has the following properties~\cite[Proposition 6.1]{CGPS19}:
\begin{align}
r_0(0)&=0 \\
\lim_{a \to \infty} r_0(a)&= 1  \quad \lim_{a \to -\infty} r_0(a)=-1 \label{r0-}\\
r_0''(a)&<0 , \quad a>0 \label{r''}
\end{align}
Note that only solutions of~\eqref{simplified} with $a\geq0$ can be minimisers of the free energy, as for $a < 0$ the solutions are increasing from $0$ to $1/2$. We argue now that~\eqref{simplified} has exactly one solution for $a>0$, for all $\beta>0$.
Consider the function $F: \R \to \R$ defined as follows
\begin{align}
F(a):= \beta(\eta + r_0(a)) -a \, .
\end{align}
We know that $F(0)= \beta \eta >0$. Furthermore for $a$ large enough and positive we have that $F(a)<0$, using~\eqref{r0-}. Thus, by the intermediate value theorem, for every fixed  $\beta>0$, we can find at least one $a^{\min}>0$ such that $F(a^{\min})=0$. Now if $F(a^{\min})=0$ for some $a^{\min}>0$, we must have that $\beta r_0'(a^{\min})<1$. If not, we would have that
\begin{align}
F(a^{\min})&= \beta \eta + \int_0^{a^{\min}}(\beta r_0'(a)-1) \dx{a} \\
& \geq \beta \eta + a^{\min}(\beta r_0'(a^{\min})-1) >0 \, ,
\end{align} 
which is a contradiction. In the last inequality we have used~\eqref{r''}. This implies that
\begin{align}
F'(a^{\min})=\beta r_0'(a^{\min})-1 <0 \, .
\end{align}
 Also
\begin{align}
F''(a)=\beta r_0''(a) <0 \, .
\end{align}
Thus once $F'(a)<0$ it remains negative for all $a>0$. It follows that $F(a^{\min})=0$ for only one $a^{\min}>0$. Since this is the only symmetric decreasing solution of~\eqref{eq:critical point} it corresponds to the unique minimiser of $\tilde{E}_{MF}$ through the expression in~\eqref{min1}. It is also must be the unique steady state obtained using the contraction argument earlier in the proof.

We will now show that for $\beta$ large enough we can find another solution of~\eqref{simplified} for $a<0$. Let $\eta=1-\delta$ for some $\delta \in (0,1)$. From~\eqref{r0-} we know that there exists some $a'<0 $ such that for all $a \leq a'$, $r_0(a)< -1+ \delta/2$. We then have that
\begin{align}
F(a')&= \beta -\beta \delta + \beta r_0(a') -a'
\\
& <-\beta \frac{\delta}{2} -a' \, . 
\end{align}
Furthermore , since $r_0(a)$ is an odd function and $\eta>0$, if $a^{\min}$ is a solution of~\eqref{simplified}, then $-a^{\min}$
cannot be a solution. It follows that $a^* \neq -a^{\min}$.
\end{proof}

\paragraph{\bf Acknowledgements:} MGD was partially supported by EPSRC grant number EP/P031587/1. RSG is funded by an Imperial College President's PhD Scholarship, partially through EPSRC Award Ref. 1676118. Part of this work was carried out at the {\emph ``Junior Trimester Programme in Kinetic Theory''} held at the Hausdorff Research Institute for Mathematics, Bonn. RSG is grateful to the institute for its hospitality. GAP was partially supported by the EPSRC through grant numbers EP/P031587/1, EP/L024926/1, and EP/L020564/1. This research was funded in part by JPMorgan Chase \& Co. Any views or opinions expressed herein are solely those of the authors listed, and may differ from the views and opinions expressed by JPMorgan Chase \& Co. or its affiliates. This material is not a product of the Research Department of J.P. Morgan Securities LLC. This material does not constitute a solicitation or offer in any jurisdiction. The authors would like to thank Martin Hairer and Felix Otto for useful discussions during the course of this work.

\begin{appendix}
\section{Coupling arguments}\label{ap:coupling}
In this section we will use coupling techniques introduced by Eberle and co-authors~\cite{Ebe11,Ebe16,EGZ19,DEGZ18} to show some necessary results for our proofs. Following the previous strategy we construct a new metric which is equivalent to the Wasserstein metric. We define the constant
\begin{align}
\kappa := \inf_{x\in\T} V''(x)+ \inf_{x\in\T}W''(x)\le 0 \, ,
\end{align} 
which gives a lower bound of the semi-convexity of the function of $V$ and $V+W\ast \tilde{\nu}^*$ on $\T$. Next, we define the following functions on $[0,1/2]$:
\begin{align}\label{eq:deff}
\psi(r)&:= \exp \bra*{ \frac{\beta\kappa r^2}{8} }  , && \Phi(r):= \int_0^r \psi(s) \dx{s} \, , \\
g(r)&:=1- \frac{c}{2}\int_0^r \Phi(s) \bra*{\psi(s)}^{-1} \dx{s}, && c:=\bra*{\int_0^{1/2} \Phi(s) \bra*{\psi(s)}^{-1} \dx{s}}^{-1}\ge \frac{\beta|\kappa|}{4\left(e^{\frac{\beta|\kappa|}{32}}-1\right)}\,.
\end{align}
We note that $g(r) \in [1/2,1]$ for all $r \in [0,1/2]$ and $\lim_{\beta\to0^+} c=1/8$. Additionally, both $\psi$ and $g$ are decreasing functions of $r$. Thus the  function $f:[0,1/2] \to [0,1/2]$ defined as
\begin{align}
f(r)&:= \int_0^r g(s)\psi(s) \dx{s} \, ,
\end{align}
is increasing and subadditive. Furthermore, we have the bounds
\begin{align}
\frac{\psi(1/2)}{2}r \leq f(r) \leq \Phi(r) \leq r \, .\label{dbounds}
\end{align}
Thus $d_f(x,y) := f(d_\T(x,y))$ defines a metric on $\T$ which is equivalent to $d_{\T}$. The main point of this construction is to obtain the following inequality
\begin{align}
f''(r) - \beta r \kappa \frac{f'(r)}{4 } \leq -\frac{c}{2} f(r)\qquad\mbox{for all $r \in [0,1/2]$.}
\label{contraction inequality}
\end{align}
This easily follows from the following computation:
\begin{align}
f''(r)- \beta r \kappa \frac{f'(r)}{4} &= \beta r\frac{\kappa}{4} f'(r) -\frac{c}{2}\Phi(r) - \beta r \kappa \frac{f'(r)}{4}=-\frac{c}{2}\Phi(r) \leq -\frac{c}{2} f(r) \, . 
\end{align}

Moreover, we define the Lipschitz functions $\varphi_r^\delta,\varphi_s^\delta: \R \to \R$ for some $\delta>0$, such that
\begin{align}\label{eq:defphirdelta}
(\varphi_r^\delta)^2(x) + (\varphi_s^\delta)^2(x)=1 \qquad \varphi_r^\delta(x) =
\begin{cases}
0  & \gamma(\abs{x}) \leq \delta/2 \\
1  & \gamma(\abs{x}) > \delta
\end{cases}
\end{align}
where the function $\gamma:\R_+ \to [0,1/2]$ maps Euclidean distances to distances on the torus
\begin{align}
\gamma(\abs{x}) := 
\begin{cases}
(\abs{x}\mod 1) & (\abs{x} \mod 1) \leq 1/2 \\
1- (\abs{x} \mod 1) & \textrm{otherwise} 
\end{cases} 
\, .
\end{align}
The introduction of the function $\gamma$ to account for the periodic setting is the main difference with the results in the literature \cite{Ebe11,Ebe16,EGZ19,DEGZ18}.

We have the following result:
\begin{lemma}\label{coupling existence}
Assume that~\eqref{exponential convergence} holds and consider the two SDEs in~\eqref{eq:mckeanSDEq} and~\eqref{eq:mckeanSDEs}. Then there exists a coupling of $(\dot{X}_t,\dot{Y}_t)$ and a metric
$d_f$ on $\T^d$ which is equivalent to $d_{\T^d}$ such that
\begin{align}
\sup_{\eps>0}\mathbb{E} \pra*{ d_{f}(\dot{X}_t,\dot{Y}_t)^2} \to 0
\end{align}
as $t \to \infty$.
\end{lemma}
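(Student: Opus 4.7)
The plan is to employ a reflection coupling, in the spirit of Eberle~\cite{Ebe11,Ebe16,EGZ19,DEGZ18}, adapted to the periodic geometry of $\T^d$ via the torus-distance cutoff $\gamma$. Concretely, I would construct the Brownian motion driving~\eqref{eq:mckeanSDEs} from the one driving~\eqref{eq:mckeanSDEq} by setting
\begin{align}
d\dot{B}_t^X = \varphi_s^\delta(\dot{Y}_t - \dot{X}_t)\, d\dot{B}_t^Y + \varphi_r^\delta(\dot{Y}_t - \dot{X}_t)\bra*{I - 2 e_t e_t^\top}\, d\dot{B}_t^Y \, ,
\end{align}
where $e_t$ is the unit vector along the shortest geodesic from $\dot{X}_t$ to $\dot{Y}_t$ on $\T^d$. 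The identity $(\varphi_r^\delta)^2+(\varphi_s^\delta)^2=1$ together with L\'evy's characterization ensures that $\dot{B}_t^X$ is a Brownian motion. The purpose of $\gamma$ in~\eqref{eq:defphirdelta} is precisely to respect the periodic metric when switching between the reflection regime ($r_t := d_{\T^d}(\dot{X}_t, \dot{Y}_t) \geq \delta$) and the synchronous regime ($r_t \leq \delta/2$).

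Next, I would apply It\^o's formula to $f(r_t)^2$. After dropping the local martingale and taking expectations, one arrives at an inequality of the form
\begin{align}
\tfrac{d}{dt}\mathbb{E}\pra*{f(r_t)^2} \leq 2\,\mathbb{E}\pra*{f(r_t)\bra*{f'(r_t) D_t + 4\beta^{-1} f''(r_t)(\varphi_r^\delta(r_t))^2}} + 8\beta^{-1}\mathbb{E}\pra*{(f'(r_t))^2 (\varphi_r^\delta(r_t))^2} \, ,
\end{align}
where the drift difference $D_t$ splits, via the definition of $\kappa$ together with the Lipschitz regularity of $\nabla W$ (and the Kantorovich--Rubinstein duality), as
\begin{align}
D_t \leq |\kappa|\,r_t + \norm{W}_{C^2(\T^d)}\, d_1(\tilde{\nu}^\eps(t),\tilde{\nu}^*) \leq |\kappa|\,r_t + C\, d_2(\tilde{\nu}^\eps(t), \tilde{\nu}^*) \, .
\end{align}
On the set $\{r_t \geq \delta\}$ the key inequality~\eqref{contraction inequality} absorbs the $f''$ and $|\kappa|r_t f'$ terms into $-(2c/\beta)\,f(r_t)$; on the close region $\{r_t \leq \delta\}$ the bounds $f(r_t) \leq r_t \leq \delta$ together with $f'(r_t) \leq 1$ make the contribution negligible as $\delta \downarrow 0$.

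Putting everything together I expect a bound of the form
\begin{align}
\tfrac{d}{dt}\mathbb{E}\pra*{f(r_t)^2} \leq -\tfrac{2c}{\beta}\mathbb{E}\pra*{f(r_t)^2} + C'\, \sqrt{\mathbb{E}\pra*{f(r_t)^2}}\, d_2(\tilde{\nu}^\eps(t), \tilde{\nu}^*) \, ,
\end{align}
with constants independent of $\eps$, using the uniformity in~\eqref{exponential convergence}. Since that assumption gives $d_2(\tilde{\nu}^\eps(t), \tilde{\nu}^*) \leq \sqrt{C}\, t^{-p/2}$ with $p > 1$, a Gr\"onwall-type argument applied to $\sqrt{\mathbb{E}[f(r_t)^2]}$ then yields $\sup_{\eps>0}\mathbb{E}\pra*{f(r_t)^2} \to 0$ as $t \to \infty$, and the equivalence of $d_f$ and $d_{\T^d}$ recorded in~\eqref{dbounds} concludes the proof. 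The main obstacle I anticipate is the careful treatment of the small region where $\varphi_r^\delta$ vanishes (so that the crucial contractive $f''$ term disappears) and, relatedly, the lack of smoothness of $r \mapsto d_{\T^d}(\cdot,\cdot)$ across the cut locus of $\T^d$; both are standard technical points in this circle of ideas, resolved by the cutoff $\gamma$ together with a $\delta\to 0$ limit.
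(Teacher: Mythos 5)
There is a genuine gap in the It\^o-formula step. When you apply It\^o to $f(r_t)^2$ instead of $f(r_t)$, the quadratic-variation contribution produces, in addition to the $f f''$ term, the term $8\beta^{-1}(f'(r_t))^2(\varphi_r^\delta(r_t))^2$ that you correctly wrote down. This term is \emph{not} multiplied by $f(r_t)$, so it cannot be absorbed by the contractive $-\frac{2c}{\beta}f(r_t)^2$ term, nor does it vanish in the $\delta\to 0$ limit: when $r_t$ is of order $1$ one has $(\varphi_r^\delta)^2 = 1$ and $f'(r_t)\asymp 1$, so it is a persistent order-one source. Your claimed final differential inequality silently drops it, which is exactly where the argument breaks. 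This is precisely why Eberle-type reflection-coupling estimates are run on the \emph{first} moment $\mathbb{E}[f(r_t)]$: there the quadratic-variation term comes paired with $f''<0$ and hence helps. The paper does exactly this -- it proves $\mathbb{E}[f(\gamma_t)] \to 0$ uniformly in $\eps$, and then deduces the stated second-moment convergence from the boundedness of $d_f$ on the compact torus ($d_f\le 1/2$ gives $\mathbb{E}[d_f^2]\le \tfrac12\mathbb{E}[d_f]$). Your proof would be repaired by the same two-step reduction.

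A secondary, but real, technical difference: you construct the coupling intrinsically on $\T^d$ using the geodesic direction $e_t$, which is discontinuous across the cut locus, and you treat this as a ``standard point'' without resolving it. The paper instead couples the \emph{lifted} processes $X_t, Y_t$ on $\R^d$ (choosing the lift of the invariant law so its periodic rearrangement is $\tilde\nu^*$), tracks $E_t = Y_t - X_t$, and applies the Meyer--Tanaka formula to the periodic wrap $\gamma(|E_t|)$. The local-time contribution $\int \Lambda_t(a)\,d\gamma''_-(a)$ is nonincreasing and hence can simply be dropped from the upper bound; this is how the cut-locus non-smoothness is handled cleanly and uniformly in $\eps$. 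If you wish to stay on the torus you would have to redo this local-time bookkeeping at the cut locus by hand, which is what the lift to $\R^d$ avoids.
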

\begin{proof} For convenience we write the proofs in 1 space dimension. The generalization to higher dimensions follows along similar lines. 
We start the proof by using the metric $d_f$ defined previously. We now proceed to construct the coupling between the two processes by considering the corresponding processes on $\R$, i.e. $Y_t$ and $X_t$. We assume that $\mathrm{Law}(X_0)=\nu^* \in \cP(\R)$ such that the
periodic rearrangement of $\nu^*$ is precisely $\tilde{\nu}^*$.   

Let $B_t^1$ and $B_t^2$ be two independent standard Wiener processes which are also independent of the initial conditions. We then couple the processes in a similar manner to~\cite{EGZ19} as follows
\begin{align}
&\begin{cases} 
\dx{Y}_t &= - V'(Y_t)\dx{t} - W' \ast \tilde{\nu}^\eps (t)(Y_t)\dx{t} + \sqrt{2 \beta^{-1}} \bra*{\varphi_r^\delta(E_t)dB^1_t +\varphi_s^\delta(E_t)dB_t^2} \\
\mathrm{Law}(Y_0)&= \nu_0^\eps \in \cP(\R) \, ,
\end{cases} \label{sde1}\\
&\begin{cases} 
\dx{X}_t &= -V'(X_t)\dx{t} - W' \ast \tilde{\nu}^* (X_t)\dx{t} + \sqrt{2 \beta^{-1}} \bra*{-\varphi_r^\delta(E_t)dB^1_t +\varphi_s^\delta(E_t)dB_t^2} \\
\mathrm{Law}(X_0)&= \nu^* \in \cP(\R) \, .
\end{cases}
\label{sde2}
\end{align}
where $E_t:= Y_t-X_t$ and $X_0,Y_0$ are independent. This above coupling corresponds to a combination of reflection and synchronous coupling. Note that we have suppressed the dependence on $\delta$ for the sake of notational convenience. However, in the limit as $\delta \to 0_+$ the processes $X_t$ and $Y_t$ converge $\mathbb{P}$-a.s. to corresponding limits with only reflection coupling. We also define the following function
\begin{align}
e_t:=
\begin{cases}
\frac{E_t}{\abs{E_t}} & \abs{E_t}>0 \\
0 & \textrm{otherwise} 
\end{cases}
\, .
\end{align}
Subtracting ~\eqref{sde2} from~\eqref{sde1} and using the same arguments as in~\cite{EGZ19} we obtain
\begin{align}
\dx{\abs{E_t}}&=-\bra*{ V'(Y_t) + W' \ast \tilde{\nu}^\eps (t)(Y_t) -V'(X_t) - W' \ast \tilde{\nu}^* (X_t)} (e_t) \dx{t} \\&+ 2 \sqrt{2 \beta^{-1}} \varphi_r^\delta(E_t) e_t dB_t^1 \, .
\end{align} 
Note now that the function $\R_+ \ni x \to \gamma(x)$ is a function whose derivatives are of locally bounded variation. Thus it can be expressed as the difference of two convex functions~\cite[Theorem (I)]{Har59}. We can thus apply the Meyer--Tanaka formula~\cite[Theorem 6.22]{KS91} to it, to obtain
\begin{align}
 \gamma(\abs{E_t}) &=  \gamma(\abs{E_0})-\int_0^t\gamma_\ell'(\abs{E_s})\bra*{ V'(Y_s) - W' \ast \tilde{\nu}^\eps (s)(Y_s) -V'(X_s) - W' \ast \tilde{\nu}^* (X_s)} (e_s) \dx{s} \\&+\int_0^t\gamma_\ell'(\abs{E_s}) 2 \sqrt{2 \beta^{-1}} \varphi_r^\delta(E_s) e_s dB_s^1+\int_{\R_+} \Lambda_t(a) \dx{\gamma''_-}(a) \, ,
\end{align}
where $\gamma'_\ell$ is the left derivative of $\gamma$, $\Lambda_t$ is the local time of the process $\abs{E_t}$, and $\gamma''_-$ is the negative part $\gamma''$ the distributional derivative of $\gamma$. We can throw away the positive part as $\varphi^\delta_r(0)=0$. The reader should note that $\gamma(\abs{E_t})=d_\T(\dot{X}_t,\dot{Y}_t)$, i.e. it is distance on the torus between the quotiented processes. Since the local time is an adapted non-decreasing continuous process it follows that $A_t:=\int_{\R_+} \Lambda_t(a) \dx{\gamma''_-}(a)$ is an adapted nonincreasing continuous process. 

Since $\gamma_t:=\gamma(\abs{E_t})$ is a continuous semimartingale 
we can apply Ito's formula to $f(\gamma_t)$ to obtain
\begin{align}
\dx{f}(\gamma_t) &= -f'(\gamma_t)\gamma_\ell'(\abs{E_t})\bra*{ V'(Y_t) + W' \ast \tilde{\nu}^\eps (t)(Y_t) -V'(X_t) - W' \ast \tilde{\nu}^* (X_t)} (e_t) \dx{t}  \\
&+ f'(\gamma_t) dA_t  +\gamma_\ell'(\abs{E_t}) 2 \sqrt{2 \beta^{-1}} \varphi_r^\delta(E_t) e_t dB_t^1 \\
&+ 4 f''(\gamma_t) \beta^{-1} (\varphi_r^\delta(E_t))^2 (\gamma_\ell'(\abs{E_t}))^2 \dx{t}
\end{align}
Next, we note that since $f'(x)\geq 0$ and $A_t$ is nonincreasing we have the bound 
\begin{align}
 \dx{f}(\gamma_t) &\leq -f'(\gamma_t)\gamma_\ell'(\abs{E_t})\bra*{ V'(Y_t) + W' \ast \tilde{\nu}^* (Y_t) -V'(X_t) - W' \ast \tilde{\nu}^* (X_t)} (e_t) \dx{t}  \\
&+ f'(\gamma_t)\gamma_\ell'(\abs{E_t})\bra*{ -W' \ast \tilde{\nu}^\eps(t) (Y_t)+W' \ast \tilde{\nu}^* (Y_t)}(e_t)\\
&  +\gamma_\ell'(\abs{E_t}) 2 \sqrt{2 \beta^{-1}} \varphi_r^\delta(E_t) e_t dB_t^1 \\
&+ 4 f''(\gamma_t)\beta^{-1} (\varphi_r^\delta(E_t))^2  \dx{t} \,  \\
&\leq -f'(\gamma_t)\gamma_\ell'(\abs{E_t})\bra*{ V'(Y_t) + W' \ast \tilde{\nu}^* (Y_t) -V'(X_t) - W' \ast \tilde{\nu}^* (X_t)} (e_t) \dx{t}  \label{ineq 2} \\
&+\norm{f'}_{\Leb^\infty(\T)}\|\gamma_l'\|_{\Leb^\infty(\T)}\norm{W''}_{\Leb^\infty(\T)} d_2(\tilde{\nu}^\eps(t),\tilde{\nu}^*) \dx{t}+\gamma_\ell'(\abs{E_t}) 2 \sqrt{2 \beta^{-1}} \varphi_r^\delta(E_t) e_t dB_t^1 \\
&+ 4 f''(\gamma_t)\beta^{-1} (\varphi_r^\delta(E_t))^2  \dx{t},
\end{align} 
where in the second inequality we have used the dual formulation of the $1$-Wasserstein distance. Consider now the $1$-periodic function $F:= V + W \ast \tilde{\nu}^*$, using the definition of $\gamma$ and $\kappa$ we have the inequality
\begin{align}
\gamma_\ell'(\abs{E_t})\bra*{ F'(Y_t) -F'(X_t)} (e_t) &\geq \kappa d_{\T}(\dot{X}_t,\dot{Y}_t)= \kappa \gamma_t \, .
\end{align}

\noindent Applying this
estimate to~\eqref{ineq 2} and using the fact that $f'>0$ we obtain
\begin{align}
\dx{f}(\gamma_t)  \leq& -\kappa f'(\gamma_t)\gamma_t \dx{t}+ 4 f''(\gamma_t)\beta^{-1} (\varphi_r^\delta(E_t))^2  \dx{t} \\&+\norm{W''}_{\Leb^\infty(\T)} d_1(\tilde{\nu}^\eps(t),\tilde{\nu}^*) \dx{t}+\gamma_\ell'(\abs{E_t}) 2 \sqrt{2 \beta^{-1}} \varphi_r^\delta(E_t) e_t dB_t^1 \, . \\
=&-\kappa f'(\gamma_t)\gamma_t (\varphi_r^\delta(E_t))^2 \dx{t}+ 4 f''(\gamma_t)\beta^{-1} (\varphi_r^\delta(E_t))^2  \dx{t}  + \kappa f'(\gamma_t)\gamma_t ((\varphi_r^\delta(E_t))^2-1) \dx{t}\\
& +\norm{W''}_{\Leb^\infty(\T)} d_1(\tilde{\nu}^\eps(t),\tilde{\nu}^*)\dx{t} +\gamma_\ell'(\abs{E_t}) 2 \sqrt{2 \beta^{-1}} \varphi_r^\delta(E_t) e_t dB_t^1 \, . 
\end{align}
Applying the differential inequality for $f$~\eqref{contraction inequality}, $f' \leq1$, and the definition of $\varphi_r^\delta$ \eqref{eq:defphirdelta} we obtain
\begin{align}
\dx{f}(\gamma_t) & \leq- 2c\beta^{-1}f(\gamma_t)(\varphi_r^\delta(E_t))^2 \dx{t}+ \frac{\abs{\kappa}}{2}\delta \dx{t}+\norm{W''}_{\Leb^\infty(\T)} d_1(\tilde{\nu}^\eps(t),\tilde{\nu}^*)\dx{t} \\&+\gamma_\ell'(\abs{E_t}) 2 \sqrt{2 \beta^{-1}} \varphi_r^\delta(E_t) e_t dB_t^1  \\
& \leq - 2c\beta^{-1}f(\gamma_t) \dx{t}+ 2c\beta^{-1}f(\delta) \dx{t}+ \frac{\abs{\kappa}}{2}\delta\dx{t} +\norm{W''}_{\Leb^\infty(\T)} d_1(\tilde{\nu}^\eps(t),\tilde{\nu}^*)\dx{t} \\&+\gamma_\ell'(\abs{E_t}) 2 \sqrt{2 \beta^{-1}} \varphi_r^\delta(E_t) e_t dB_t^1  \, .
\end{align}
Taking the expectation of the above expression and passing to the limit as $\delta \to 0^+$, we obtain
\begin{align}
\frac{\dx{}}{\dx{t}} \mathbb{E}\pra*{f(\gamma_t)} \leq - 2c\beta^{-1}\mathbb{E}\pra*{f(\gamma_t)} + \norm{W''}_{\Leb^\infty(\T)} d_1(\tilde{\nu}^\eps(t),\tilde{\nu}^*) \, .
\end{align}
It follows by Gronwall's Lemma that
\begin{align}\label{eq:ineqaux1}
\mathbb{E}\pra*{f(\gamma_t)} \leq e^{-2 c \beta^{-1}t } \mathbb{E}\pra*{f(\gamma_0)} +\norm{W''}_{\Leb^\infty(\T)} e^{-2 c \beta^{-1}t }  \int_0^t e^{2 c \beta^{-1}s } d_1(\tilde{\nu}(t),\tilde{\nu}^{\min}) \dx{s} \, .
\end{align}
Applying~\eqref{exponential convergence}, we have that $\mathbb{E}\pra*{d_{\T}(\dot{X}_t,\dot{Y}_t)}=\mathbb{E}\pra*{f(\gamma_t)} \to 0$ as $t \to \infty$, $\dot{X}_t$ and $\dot{Y}_t$ are the quotiented processes obtained in the limit as $\delta \to 0_+$. This completes the proof.
\end{proof}

\begin{lemma}\label{Uniformexponentialconvergence}
Given $V$ and $W$, there exists an explicit $\beta_0$ depending $V$ and $W$ such that for 
$$
\beta\le \beta_0
$$
there exists a unique minimiser and critical point, $\tilde{\nu}^{\min} \in \cP(\T^d)$, of the periodic mean field energy \eqref{periodicmeanfieldenergy} and $C_2$ depending on $\beta$, $W$ and $V$ such that
$$
d_2^2(\tilde{\nu}(t),\tilde{\nu}^{\min})\le e^{-C_2 t},
$$
where $\nu(t)$ is the solution to the periodic McKean--Vlasov equation \eqref{periodiceps=1} with arbitrary initial data $\tilde{\nu}_0 \in \cP(\T^d)$.
\end{lemma}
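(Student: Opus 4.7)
The plan is to rerun the reflection--coupling argument from the proof of Lemma~\ref{coupling existence}, but in a self-contained strictly contractive mode that works at high temperature without any external decay hypothesis. Existence and uniqueness of the minimiser/critical point $\tilde{\nu}^{\min}\in\cP(\T^d)$ of $\tilde{E}_{MF}$ for small $\beta$ is already handed over by Proposition~\ref{uniqueness} together with the equivalence in Proposition~\ref{tfae}; this yields a first explicit bound on $\beta_0$ in terms of $\norm{V}_{C^2(\T^d)}$ and $\norm{W}_{C^2(\T^d)}$.

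The next step is to couple the periodic quotiented McKean SDE~\eqref{periodic interacting SDEs} with law $\tilde{\nu}(t)$ to a second copy of itself started from, and kept stationary at, $\tilde{\nu}^{\min}$, using exactly the reflection--synchronous coupling of~\eqref{sde1}--\eqref{sde2}. The It\^o/Meyer--Tanaka computation carried out in the proof of Lemma~\ref{coupling existence}, now with the effective potential $F=V+W\ast\tilde{\nu}^{\min}$ (still $\kappa$-semiconvex in the sense of the appendix), produces
\begin{align*}
\frac{\dx{}}{\dx{t}}\mathbb{E}[f(\gamma_t)] \le -2c\beta^{-1}\mathbb{E}[f(\gamma_t)] + \norm{W''}_{\Leb^\infty(\T^d)}\,d_1(\tilde{\nu}(t),\tilde{\nu}^{\min}),
\end{align*}
where $\gamma_t=d_{\T^d}(\dot{X}_t^1,\dot{X}_t^2)$ and $c$, $f$, $\psi$ are the constants/functions from~\eqref{eq:deff}. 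Whereas the proof of Lemma~\ref{coupling existence} closed this inequality via the external hypothesis~\eqref{exponential convergence}, here it is closed self-consistently: the coupling is itself a concrete transport plan, so $d_1(\tilde{\nu}(t),\tilde{\nu}^{\min}) \le \mathbb{E}[\gamma_t]$, and the lower bound $f(r) \ge \psi(1/2)\,r/2$ in~\eqref{dbounds} converts this into an estimate by $\mathbb{E}[f(\gamma_t)]$, yielding the closed inequality
\begin{align*}
\frac{\dx{}}{\dx{t}}\mathbb{E}[f(\gamma_t)] \le -\left(2c\beta^{-1} - \frac{2\norm{W''}_{\Leb^\infty(\T^d)}}{\psi(1/2)}\right)\mathbb{E}[f(\gamma_t)].
\end{align*}

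One then picks $\beta_0$ small enough that the parenthesised coefficient is strictly positive. This is always possible since, using the explicit lower bound $c \ge \beta\abs{\kappa}/(4(e^{\beta\abs{\kappa}/32}-1))$, the term $c\beta^{-1}$ is bounded below by $\approx 8/\beta$ as $\beta\to 0^+$, whereas $2\norm{W''}_{\Leb^\infty(\T^d)}/\psi(1/2)$ stays bounded; this is the second, explicit constraint on $\beta_0$. A Gr\"onwall application together with the trivial bound $\gamma_0\le 1/2$ and the chain $d_2^2(\tilde{\nu}(t),\tilde{\nu}^{\min}) \le \mathbb{E}[\gamma_t^2] \le \tfrac12\mathbb{E}[\gamma_t] \le (1/\psi(1/2))\mathbb{E}[f(\gamma_t)]$ delivers an estimate of the form $d_2^2 \le C_1 e^{-C t}$; the desired form $e^{-C_2 t}$ is then obtained by shrinking the rate slightly, using that $d_2^2 \le 1/4$ is always available at small $t$ to absorb the multiplicative constant. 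The main obstacle is the quantitative bookkeeping producing the two explicit constraints on $\beta_0$ and the final rate $C_2$; all of the probabilistic and pathwise work is already contained in the proof of Lemma~\ref{coupling existence}.
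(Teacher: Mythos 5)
Your proposal matches the paper's proof of Lemma~\ref{Uniformexponentialconvergence} almost exactly: both rerun the reflection-coupling estimate from Lemma~\ref{coupling existence} and close the feedback term self-consistently using that the coupling is itself a transport plan so $d_1(\tilde{\nu}(t),\tilde{\nu}^{\min})$ is controlled by $\mathbb{E}[f(\gamma_t)]$ via the equivalence of $f$ with the torus distance, after which Gr\"onwall and a smallness condition on $\beta$ (equivalently, largeness of $c\beta^{-1}$ relative to $\norm{W''}_{\Leb^\infty}$) give the exponential rate. The only differences are cosmetic — you work at the differential level where the paper applies the integral form of Gr\"onwall, and you spell out the final $d_1\to d_2^2$ conversion using $\gamma_t\le 1/2$ which the paper's proof leaves implicit.
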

\begin{proof}
As done previously, we state the proof in $1$ dimension for the sake of simplicity. Clearly for $\beta$ small enough, by Proposition~\ref{uniqueness}, the periodic mean field energy has a unique minimiser, $\tilde{\nu}^{\min}$. Similar to the proof of \eqref{coupling existence}, we consider the processes on $\R$
\begin{align}
&\begin{cases} 
\dx{Y}_t &= - V'(Y_t)\dx{t} - W' \ast \tilde{\nu} (t)(Y_t)\dx{t} + \sqrt{2 \beta^{-1}} \bra*{\varphi_r^\delta(E_t)dB^1_t +\varphi_s^\delta(E_t)dB_t^2} \\
\mathrm{Law}(Y_0)&= \nu_0 \in \cP(\R) \, ,
\end{cases} \label{sde11}\\
&\begin{cases} 
\dx{X}_t &= -V'(X_t)\dx{t} - W' \ast \tilde{\nu}^{\min} (X_t)\dx{t} + \sqrt{2 \beta^{-1}} \bra*{-\varphi_r^\delta(E_t)dB^1_t +\varphi_s^\delta(E_t)dB_t^2} \\
\mathrm{Law}(X_0)&= \nu^{\min} \in \cP(\R) \, .
\end{cases} \label{sde21}
\end{align}
such that $\tilde{\nu}_0,\tilde{\nu}^{\min}$ are the periodic rearrangements of $\nu_0,\nu^{\min}$, respectively. We obtain the inequality \eqref{eq:ineqaux1}
\begin{equation}
    \mathbb{E}\pra*{f(\gamma_t)} \leq e^{-2 c \beta^{-1}t } \mathbb{E}\pra*{f(\gamma_0)} +\norm{W''}_{\Leb^\infty(\T)} e^{-2 c \beta^{-1}t }  \int_0^t e^{2 c \beta^{-1}s } d_1(\tilde{\nu}(s),\tilde{\nu}^{\min}) \dx{s},
\end{equation}
where $\gamma_t=d_\mathbb{T}(\dot{X}_t,\dot{Y}_t)$. Using the fact that $\mathrm{Law} (\dot{Y}_t)=\tilde{\nu}(t)$ and $\mathrm{Law}(\dot{X}_t)=\tilde{\nu}^{\min}$ and applying the bounds from~\eqref{dbounds}, we obtain
$$
\frac{e^{\frac{\beta k}{32}}}{2}d_1(\tilde{\nu}(t),\tilde{\nu}^{\min}) \le d_f(\tilde{\nu}(t),\tilde{\nu}^{\min})\le \mathbb{E}\pra*{f(\gamma_t)}.
$$
Combining this with the previous identity and applying the integral version of Gronwall's lemma we obtain
$$
d_1(\tilde{\nu}(t),\tilde{\nu}^{\min})\le e^{-2 c \beta^{-1}t } + t e^{-t\left(\beta^{-1}c- e^{-\frac{\beta k}{32}\norm{W''}_{\Leb^\infty(\T)}}\right)}
$$
Using the lower bound for $c$ \eqref{eq:deff}, we have the following: if
$$
\frac{|\kappa|}{4e^{\frac{\beta|\kappa|}{32}}\left(e^{\frac{\beta|\kappa|}{32}}-1\right)}\ge \norm{W''}_{\Leb^\infty(\T)},
$$
then there exists $C_2>0$ such that
$$
d_1(\tilde{\nu}(t),\tilde{\nu}^{\min})\le e^{-C_2t}.
$$
By making $\beta$ smaller than some $\beta_0$, this can be achieved.

\end{proof}

\end{appendix}

 \bibliographystyle{abbrv}
 \bibliography{biblio}

\end{document}